\newtheorem{theorem}{Theorem}
\newtheorem{corollary}[theorem]{Corollary}
\newtheorem{definition}[theorem]{Definition}
\newtheorem{lemma}[theorem]{Lemma}
\newtheorem{remark}[theorem]{Remark}
\newenvironment{proof}[1][Proof]{\noindent\textbf{#1.} }{\ \rule{0.5em}{0.5em}}
\begin{document}

\title{Model problem for integro-differential Zakai equation with
discontinuous observation processes in H\"{o}lder spaces}
\author{R.Mikulevicius and H.Pragarauskas \\
University of Southern California, Los Angeles\\
Institute of Mathematics and Infrmatics, Vilnius}
\maketitle

\begin{abstract}
The existence and uniqueness of solutions of the Cauchy problem to a a
stochastic parabolic integro-differential equation is investigated. The
equattion considered arises in nonlinear filtering problem with a jump
signal process and jump observation.
\end{abstract}

\section{Introduction}

In a complete probability space $(\Omega ,\mathcal{F},\mathbf{P})$ with a
filtration of $\sigma $-algebras $\mathbb{F}=(\mathcal{F}_{t})$ satisfying
the usual conditions, we consider the linear stochastic integro-differential
parabolic equation 
\begin{equation}
\left\{ 
\begin{array}{ll}
du(t,x)=\bigl(A^{(\alpha )}u(t,x)+f(t,x)\bigr)dt+\int_{U}g(t,x,v)q(dt,dv) & 
\text{in }H, \\ 
u(0,x)=0 & \text{in }\mathbf{R}^{d}%
\end{array}%
\right.  \label{intr1}
\end{equation}%
of the order $\alpha \in (0,2]$, where $H=\left[ 0,T\right] \times \mathbf{R}%
^{d},\,q(dt,d\upsilon )$ is a martingale measure on a measurable space $%
([0,\infty )\times U,\mathcal{B}([0,\infty ))\otimes \mathcal{U}),\,g$ is an 
$\mathbb{F}$-adapted measurable real-valued function on $H\times U,f$ is an $%
\mathbb{F}$-adapted measurable real-valued function on $H$, 
\begin{eqnarray}
A^{(\alpha )}u(t,x) &=&\int_{\mathbf{R}_{0}^{d}}[u(t,x+y)-u(t,x)-(\nabla
u(t,x),y)\chi ^{(\alpha )}{(y)}]m^{(\alpha )}(t,y)\frac{dy}{|y|^{d+\alpha }}
\notag \\
&&+\bigl(b(t),\nabla u(t,x)\bigr)1_{\alpha =1}+\frac{1}{2}%
\sum_{i,j=1}^{d}B^{ij}(t)\partial _{ij}^{2}u(t,x)1_{\alpha =2},  \label{two}
\\
\chi ^{(\alpha )}{(y)} &=&1_{\alpha >1}+1_{|y|\leqslant 1}1_{\alpha =1}, 
\notag
\end{eqnarray}%
$m^{(\alpha )}(t,y)$ is a deterministic measurable real-valued function
homogeneous in $y$ of order zero, $m^{(2)}=0,\mathbf{R}_{0}^{d}=\mathbf{R}%
^{d}\backslash \{0\}$ and $b(t)=(b^{1}(t),\ldots ,b^{d}(t)),B(t)=(B^{ij}(t))$
are deterministic bounded measurable function$.$ It is the model problem for
the Zakai equation (see \cite{za}) arising in the nonlinear filtering
problem. Assume that the signal process $X_{t}$ is defined by 
\begin{eqnarray*}
X_{t} &=&X_{0}+\int_{0}^{t}b(s)1_{\alpha =1}ds+\int_{0}^{t}\sqrt{B(s)}%
dW_{s}1_{\alpha =2} \\
&&+\int_{0}^{t}\int \chi ^{(\alpha )}{(y)}y\widetilde{q}(ds,dy)+\int_{0}^{t}%
\int (1-\chi ^{(\alpha )}{(y)})y\widetilde{p}(ds,dy),
\end{eqnarray*}%
where $\widetilde{p}(ds,dy)$ is a point measure on $[0,\infty )\times 
\mathbf{R}_{0}^{d}$ with a compensator $m^{(\alpha )}(s,y)\frac{dyds}{%
|y|^{d+\alpha }},$%
\begin{equation*}
\widetilde{q}(ds,dy)=\widetilde{p}(ds,dy)-m^{(\alpha )}(s,y)\frac{dyds}{%
|y|^{d+\alpha }} 
\end{equation*}%
and $W_{t}$ is a standard Wiener process in $\mathbf{R}^{d}$.

Assume $X_{0}$ has a probability density function $u_{0}\left( x\right) $
and the observation $Y_{t}$ is discontinuous, with jump intensity depending
on the signal, such that%
\begin{equation*}
Y_{t}=\int_{0}^{t}\int_{|y|>1}yp(ds\,dy)+\int_{0}^{t}\int_{|y|\leqslant 1}y%
\hat{q}(ds,dy), 
\end{equation*}%
where $p(ds,dy)$ is a point measure on $[0,\infty )\times \mathbf{R}_{0}^{d}$
not having common jumps with $\widetilde{p}(ds,dy)$ with a compensator $\rho
(X_{t},y)\pi (dy)$ and $\hat{q}(dt,dy)=p(dt,dy)-\pi (dy)dt$. Assume $%
C_{1}\geqslant \rho (x,y)\geqslant c_{1}>0,\pi (dy)$ is a measure on $%
\mathbf{R}_{0}^{d}$ such that%
\begin{equation*}
\int |y|^{2}\wedge 1\pi (dy)<\infty , 
\end{equation*}%
and $\int [\rho (x,y)-1]^{2}\pi (dy)$ is bounded. Then for every function $%
\psi $ such that $\mathbf{E}[\psi (X_{t})^{2}]<\infty ,$ the optimal mean
square estimate for $\psi \left( X_{t}\right) ,\,t\in \left[ 0,T\right] $,
given the past of \ the observations $\mathcal{F}_{t}^{Y}=\sigma
(Y_{s},s\leqslant t),$ is of the form 
\begin{equation*}
\hat{\psi}_{t}=\mathbf{E}\bigl[\psi (X_{t})|\mathcal{F}_{t}^{Y}\bigr]=\frac{%
\mathbf{\widetilde{E}}\big[\psi (X_{t})\zeta _{t}|\mathcal{F}_{t}^{Y}\big]}{%
\mathbf{\widetilde{E}}\big[\zeta _{t}|\mathcal{F}_{t}^{Y}\big ]}, 
\end{equation*}%
where $\zeta _{t}$ is the solution of the linear equation%
\begin{equation*}
d\zeta _{t}=\zeta _{t-}\int [\rho (X_{t-},y)-1]\hat{q}(dt,dy) 
\end{equation*}%
and $d\mathbf{\widetilde{P}}=\zeta \left( T\right) ^{-1}d\mathbf{P}.$ Under
some assumptions, one can easily show that if $v(t,x)$ is an $\mathbb{F=(}%
\mathcal{F}_{t+}^{Y})$-adapted unnormalized filtering density function 
\begin{equation}
\mathbf{\widetilde{E}}\left[ \psi \left( X_{t}\right) \zeta _{t}|\mathcal{F}%
_{t}^{Y}\right] =\int v\left( t,x\right) \psi \left( x\right) \,dx,
\label{prf0}
\end{equation}%
then it is a solution of the Zakai equation 
\begin{eqnarray}
dv(t,x) &=&v(t,x)\int [\rho (x,y)-1]\hat{q}(dt,dy)+\bigg\{\bigl(b(t),\nabla
v(t,x)\bigr)1_{\alpha =1}  \notag \\
&&+\frac{1}{2}\sum_{i,j=1}^{d}B^{ij}(t)\partial _{ij}^{2}v(t,x)1_{\alpha =2}
\notag \\
&&+\int_{\mathbf{R}_{0}^{d}}[v(t,x+y)-v(t,x)-(\nabla v(t,x),y)\chi ^{(\alpha
)}{(y)}]m^{(\alpha )}(t,-y)\frac{dy}{|y|^{d+\alpha }}\bigg\},  \notag \\
v(0,x) &=&u_{0}(x).  \label{prf1}
\end{eqnarray}%
Since $Y_{t},t\geqslant 0,$ and $X_{t},t\geqslant 0,$ are independent with
respect to $\mathbf{\widetilde{P}},$ for $u\left( t,x\right) =v\left(
t,x\right) -u_{0}\left( x\right) $ we have an equation whose model problem
is of the type given by (\ref{intr1}). Indeed, according to \cite{grig1},
for any infinitely differentiable function $\psi $ on $\mathbf{R}^{d}$ with
compact support, the conditional expectation $\pi _{t}(\psi )=\mathbf{%
\widetilde{E}}\left[ \psi \left( X_{t}\right) \zeta _{t}|\mathcal{F}_{t}^{Y}%
\right] $ satisfies the equation%
\begin{eqnarray*}
d\pi _{t}(\psi ) &=&\int \pi _{t}\big(\psi \lbrack \rho (\cdot ,y)-1]\big)%
\hat{q}(dt,dy)+\pi _{t}\bigg\{(b(t),\nabla \psi )1_{\alpha =1} \\
&&+\frac{1}{2}\sum_{i,j=1}^{d}B^{ij}(t)\partial _{ij}^{2}\psi (t,x)1_{\alpha
=2} \\
&&+\int_{\mathbf{R}_{0}^{d}}\big[\psi (\cdot +y)-\psi -(\nabla \psi ,y)\chi
^{(\alpha )}(y)\big]m^{(\alpha )}(t,y)\frac{dy}{|y|^{d+\alpha }}\bigg\}dt.
\end{eqnarray*}%
Assuming (\ref{prf0}) and integrating by parts, we obtain (\ref{prf1}).

The general Cauchy problem for a linear parabolic SPDE of the second order 
\begin{equation}
\left\{ 
\begin{array}{ll}
du=(a^{ij}\,\partial _{ij}u+b^{i}\partial _{i}u+c\,u+f)dt+(\sigma
^{i}\partial _{i}u+h\,u+g)d{W}_{t} & \text{in }H, \\ 
u(0,x)=0 & \text{in }\mathbf{R}^{d}%
\end{array}%
\right.  \label{intr3}
\end{equation}%
driven by a Wiener process $W_{t}$ has been studied by many authors. When
the matrix $(2a^{ij}-\sigma ^{i}\cdot \sigma ^{j})$ is uniformly
non-degenerate there exists a complete theory in Sobolev spaces $%
W^{n,2}\left( \mathbf{R}^{d}\right) $ (see \cite{par}, \cite{kryro1}, \cite%
{ro2} and references therein) and in the spaces of Bessel potentials $%
H_{s}^{p}\left( \mathbf{R}^{d}\right) $ (see \cite{kry1}). The equation (\ref%
{intr3}) in H\"{o}lder classes with $\sigma =0$ was considered in \cite{ro}, 
\cite{m}. The equation (\ref{intr3}) in H\"{o}lder classes with $\sigma =0$
and integro-differential operator of the order $\alpha $ in the drift part
was studied in \cite{mip2}.

In this paper, following the main steps in \cite{mip}, \cite{mip2}, we prove
the solvability of the Cauchy problem (\ref{intr1}). In Section 2, we
introduce the notation and state our main result. In Section 3, we prove
some auxiliary results concerning moment estimates of discontinuous
martingales and the solvability of (\ref{intr1}) for smooth input functions.
The proof of the main theorem is given in Section 4.

\section{Notation and main result}

\subsection{Notation}

The following notation will be used in the paper.

Let $(\Omega ,\mathcal{F},\mathbf{P})$ be a complete probability space with
a right-continuous filtration of $\sigma $-algebras $\mathbb{F}=(\mathcal{F}%
_{t})_{t\in \lbrack 0,T]},$ $\mathcal{F}_{0}$ containing all $\mathbf{P}$%
-null sets of $\mathcal{F}$. Let $(U,\mathcal{U})$ be a measurable space
with a $\sigma $-finite non-negative measure $\Pi (d\upsilon )$. Throughout
the paper we assume that there is an increasing sequence $U_n\subset U$ such
that 
\begin{equation*}
U=\cup_n U_n;\quad \Pi(U_n)<\infty,\qquad n=1,2,\ldots . 
\end{equation*}

We say that a measurable function $f:[0,T]\times \Omega \times \mathbf{R}%
^{d}\rightarrow \mathbf{R}$ is $\mathbb{F}$-progressively measurable if for
each $t\in \lbrack 0,T]$ the mapping $(s,\omega ,x)\rightarrow f(s,\omega
,x) $ on $[0,t]\times \Omega \times \mathbf{R}^{d}$ is $\mathcal{B}%
([0,t])\otimes \mathcal{F}_{t}\otimes \mathcal{B}(\mathbf{R}^{d})$%
-measurable.

We denote $\mathcal{P}(\mathbb{F})$ the $\mathbb{F}$-predictable $\sigma $%
-algebra on $[0,T]\times \Omega $ and $\mathcal{O}(\mathbb{F})$ the $\sigma $%
-algebra of $\mathbb{F}$-well measurable sets on $[0,T]$. We denote $%
\mathcal{R}(\mathbb{F})$ the $\mathbb{F}$-progressive $\sigma $-algebra on $%
[0,T]\times \Omega $.

We say that a stochastic process $X_{t},0\leqslant t\leqslant T,$ is cadlag
if $\mathbf{P}$-a.s. it is right-continuous ($X_{t+}=X_{t})$, the left-hand
limits $X_{t-}$ exist for all $t\in \lbrack 0,T]$ and $X_{T-}=X_{T}.$

We denote $H=[0,T]\times \mathbf{R}^{d},\mathbf{N}_{0}=\{0,1,2,\ldots \},%
\mathbf{R}_{0}^{d}=\mathbf{R}^{d}\backslash \{0\}.$ If $x,y\in \mathbf{R}%
^{d},$\ we write 
\begin{equation*}
(x,y)=\sum_{i=1}^{d}x_{i}y_{i},\,|x|=\sqrt{(x,x)}. 
\end{equation*}

Let $L_{p}(\Omega ,\mathcal{F},\mathbf{P})$, $p\geqslant 1$, be the space of
random variables $X$ with finite norm 
\begin{equation*}
|X|_{p}=\bigl(\mathbf{E}|X|^{p}\bigr)^{\frac{1}{p}}. 
\end{equation*}

Let $B_{p}(H)$, $p\geqslant 1$, be the space of all $\mathcal{R}(\mathbb{F}%
)\otimes\mathcal{B}(\mathbf{R}^d)$-measurable functions $u\colon\ \Omega
\times H\to \mathbf{R}$ such that 
\begin{equation*}
||u||_{p}=\sup_{(t,x)\in H} \vert u(t,x) \vert _p <\infty . 
\end{equation*}
Similarly, $B_{r,p}(H\times U),\ r\geqslant 1,\ p\geqslant 1$, is the space
of all $\mathcal{R}(\mathbb{F})\otimes\mathcal{B}(\mathbf{R}^d)\otimes 
\mathcal{U}$-measurable functions $g\colon\ \Omega \times H\times U\to 
\mathbf{R}$ such that 
\begin{equation*}
\Vert g\Vert_{r,p} = \sup_{(t,x)\in H} \big\vert g(t,x,\cdot)\big\vert %
_{r,p} < \infty , 
\end{equation*}
where 
\begin{equation*}
\vert g(t,x,\cdot)\vert _{r,p} = \bigg\vert \biggl( \int_U \vert
g(t,x,v)\vert ^r \Pi(dv)\biggr) ^{\frac{1}{r}} \bigg\vert _p . 
\end{equation*}

For $L_{p}(\Omega ,\mathcal{F},\mathbf{P})$-valued function $u$ on $H$ or $%
\mathbf{R}^{d},$ we denote its partial derivatives in $x$ in $L_{p}(\Omega ,%
\mathcal{F},\mathbf{P})$-sense by $\partial _{i}u=\partial u/\partial x_{i}$%
, $\partial _{ij}^{2}u=\partial ^{2}u/\partial x_{i}\partial x_{j}$, etc.;$%
\,\partial u=\nabla u=(\partial _{1}u,\ldots ,\partial _{d}u)$ denotes the
gradient of $u$ with respect to $x$; for a multiindex $\gamma \in \mathbf{N}%
_{0}^{d}$ we denote%
\begin{equation*}
\partial _{x}^{\gamma }u(t,x)=\frac{\partial ^{|\gamma |}u(t,x)}{\partial
x_{1}^{\gamma _{1}}\ldots \partial x_{d}^{\gamma _{d}}}. 
\end{equation*}%
For $\alpha \in (0,2]$, we write 
\begin{equation*}
\partial ^{\alpha }u(t,x)=\mathcal{F}^{-1}[|\xi |^{\alpha }\mathcal{F}%
u(t,\xi )](x), 
\end{equation*}%
where $\mathcal{F}$ denotes the Fourier transform with respect to $x\in 
\mathbf{R}^{d}$ and $\mathcal{F}^{-1}$ is the inverse Fourier transform,
i.e. 
\begin{equation*}
\mathcal{F}u(t,\xi )=\int_{\mathbf{R}^{d}}\,\mathrm{e}^{-i(\xi
,x)}u(t,x)dx,\quad \mathcal{F}^{-1}u(t,\xi )=\frac{1}{(2\pi )^{d}}\int_{%
\mathbf{R}^{d}}\,\mathrm{e}^{i(\xi ,x)}u(t,\xi )d\xi . 
\end{equation*}

For $\alpha \in \lbrack 0,2],\ \beta \in (0,1),\ C_{p}^{\alpha ,\beta }(H)$
is the set of all $u\in B_{p}(H)$ with finite norm 
\begin{equation*}
\Vert u\Vert _{\alpha ,\beta ;p}=\Vert u\Vert _{p}+\Vert \partial ^{\alpha
}u\Vert _{p}+[\partial ^{\alpha }u]_{\beta ;p}, 
\end{equation*}%
where 
\begin{equation*}
\lbrack u]_{\beta ;p}=\sup_{t,x\neq y}\frac{|u(t,x)-u(t,y)|_{p}}{%
|x-y|^{\beta }}. 
\end{equation*}

Similarly, $C^{\alpha ,\beta }_{r,p}(H\times U)$ is the set of all $g\in
B_{r,p}(H\times U)$ with finite norm 
\begin{equation*}
\Vert g\Vert _{\alpha ,\beta ;r,p} =\Vert g\Vert_{r,p} +\Vert
\partial^{\alpha }g\Vert_{r,p} +[\partial^{\alpha }u] _{\beta ;r,p} , 
\end{equation*}
where 
\begin{equation*}
[g]_{\beta ;r,p} =\sup_{t,x\neq y} \frac{\vert
g(t,x,\cdot)-g(t,y,\cdot)\vert_{r,p} }{\vert x-y\vert^{\beta } } . 
\end{equation*}

For $\alpha =0$, we write 
\begin{equation*}
\Vert u\Vert_{0,\beta ;p}= \Vert u\Vert_p + [u]_{\beta ;p} 
\end{equation*}
and 
\begin{equation*}
\Vert g\Vert_{0,\beta ;r,p}= \Vert g\Vert_{r,p} + [g]_{\beta ;r,p} . 
\end{equation*}

$C^{\infty}_p(H)$ is the set of all $u\in B_p(H)$ such that $\mathbf{P}$%
-a.s. for all $t\in[0,T]$ the function $u(t,x)$ is infinitely differentiable
in $x$ and for every multiindex $\gamma \in \mathbf{N}^d_0$ 
\begin{equation*}
\sup_{(t,x)\in H} \big\vert \partial^{\gamma }_x u(t,x)\big\vert _p < \infty
. 
\end{equation*}
Similarly, $C^{\infty}_{r,p}(H\times U)$ is the set of all $g\in
B_{r,p}(H\times U)$ such that $\mathbf{P}$-a.s. for all $t\in[0,T]$, $v\in U$
the function $g(t,x,v)$ is infinitely differentiable in $x$ and for every
multiindex $\gamma \in \mathbf{N}^d_0$ 
\begin{equation*}
\sup_{(t,x)\in H} \big\vert \partial^{\gamma }_x g(t,x,\cdot)\big\vert %
_{r,p} < \infty . 
\end{equation*}

The counterparts of spaces $C^{\alpha ,\beta }_p(H)$ and $C^{\infty}_p(H)$
for nonrandom functions are denoted simply by $C^{\alpha ,\beta }(H)$ and $%
C^{\infty}(H)$. We denote $C^{\infty}_0(\mathbf{R}^d)$ the set of all
infinitely differentiable functions on $\mathbf{R}^d$ with compact support.

$C=C(\cdot ,\ldots ,\cdot ),c=c(\cdot ,\ldots ,\cdot )$ denote constants
depending only on quantities appearing in parentheses. In a given context
the same letter will (generally) be used to denote different constants
depending on the same set of arguments.

\subsection{\noindent Main result}

Let $\alpha \in (0,2]$ and $b\colon \ [0,T]\rightarrow \mathbf{R}^{d}$, $%
m^{(\alpha )}\colon \ [0,T]\times \mathbb{R}_{0}^{d}\rightarrow \mathbf{R}$
be measurable functions. Throughout the paper we assume that the function $%
m^{(\alpha )}(t,y)$ is homogeneous in $y$ of order zero, i.e. for all $t\in
\lbrack 0,T],\ r>0,\ y\in \mathbf{R}_{0}^{d}$ 
\begin{equation*}
m^{(\alpha )}(t,ry)=m^{(\alpha )}(t,y), 
\end{equation*}%
and 
\begin{equation*}
\int_{S^{d-1}}wm^{(1)}(t,w)\mu _{d-1}(dw)=0, 
\end{equation*}%
where $S^{d-1}$ is the unit sphere in $\mathbf{R}^{d}$ and $\mu _{d-1}$ is
the Lebesgue measure on it. Also, $m^{(2)}=0.$

Let $A^{(\alpha )},\ \alpha \in (0,2)$, be the operators defined by (\ref%
{two}). In terms of Fourier transform%
\begin{equation*}
A^{(\alpha )}u(x)=\mathcal{F}^{-1}\left[ \psi ^{(\alpha )}(t,\xi )\mathcal{F}%
u(\xi )\right] (x), 
\end{equation*}%
where%
\begin{eqnarray*}
{}\psi ^{(\alpha )}(t,\xi ) &=&i(b(t),\xi )1_{\alpha =1}+\frac{1}{2}%
\sum_{i,j=1}^{d}B^{ij}(t)\xi _{i}\xi _{j}1_{\alpha =2} \\
&&-C\int_{S^{d-1}}|(w,\xi )|^{\alpha }\Big[1-i\Big(\tan \frac{\alpha \pi }{2}%
\mbox{sgn}(w,\xi )1_{\alpha \neq 1} \\
&&-\frac{2}{\pi }\mbox{sgn}(w,\xi )\ln |(w,\xi )|1_{\alpha =1}\Big)\Big]%
m^{(\alpha )}(t,w)\mu _{d-1}(dw) \\
&=&-|\xi |^{\alpha }M^{(\alpha )}(t,\xi ),
\end{eqnarray*}%
$C=C(\alpha ,d)$ is a positive constant and 
\begin{eqnarray}
M^{(\alpha )}(t,\xi ) &=&-i(b(t),\frac{\xi }{|\xi |})1_{\alpha =1}+\frac{1}{2%
}\sum_{i,j=1}^{d}B^{ij}(t)\xi _{i}\xi _{j}|\xi |^{-2}1_{\alpha =2}  \notag \\
&&+C\int_{S^{d-1}}\Big|\Big(w,\frac{\xi }{|\xi |}\Big)\Big|^{\alpha }\bigg[%
1-i\bigg(\tan \frac{\alpha \pi }{2}\mathop{{\mathrm{sgn}}}\Big(w,\frac{\xi }{%
|\xi |}\Big)1_{\alpha \neq 1}  \label{M} \\
&&-\frac{2}{\pi }\mathop{{\mathrm{sgn}}}\Big(w,\frac{\xi }{|\xi |}\Big)\ln %
\Big|\Big(w,\frac{\xi }{|\xi |}\Big)\Big|1_{\alpha =1}\bigg)\bigg]m^{(\alpha
)}(t,w)\mu _{d-1}(dw)  \notag
\end{eqnarray}%
is $0$-homogeneous function with respect to $\xi $.

Let $p(dt,d\upsilon )$ be an $\mathbb{F}$-adapted Poisson point measure on $%
[0,\infty )\times U$ with a compensator $\Pi (d\upsilon )dt$ and 
\begin{equation*}
q(dt,dv) = p(dt,dv) - \Pi(dv)dt 
\end{equation*}
be a martingale measure.

In stochastic H\"{o}lder spaces ${C}_{p}^{\alpha ,\beta }(H),$ we consider
the Cauchy problem%
\begin{equation}
\left\{ 
\begin{array}{ll}
du(t,x)=\big[(A^{(\alpha )}-\lambda )u(t,x)+f(t,x)\big]dt+\int_{U}g(t,x,%
\upsilon )q(dt,d\upsilon ) & \text{in }H, \\ 
u(0,x)=0 & \text{in }\mathbf{R}^{d},%
\end{array}%
\right.  \label{eq1}
\end{equation}%
where $\lambda \geqslant 0$.

We will need the following assumptions.

\textbf{A1}. There is a constant $\mu >0$ such that for all $t\in \lbrack
0,T]$%
\begin{equation*}
\inf_{|\xi |=1}\,\mathrm{Re}\,M^{(\alpha )}(t,\xi )\geq \mu ; 
\end{equation*}

\textbf{A2.} The function $m^{(\alpha )}(t,y)$ is differentiable in $y$ up
to the order $d_{0}$ and 
\begin{equation*}
C^{(\alpha )}=\sup_{0\leq t\leq T}\big[\sup_{|\gamma |\leq
d_{0},|y|=1}|\partial _{y}^{\gamma }m^{(\alpha )}(t,y)|+|b(t)|1_{\alpha
=1}+|B(t)|1_{\alpha =2}\big]<\infty , 
\end{equation*}%
where $d_{0}=[\frac{d}{2}]+1$ and $[\frac{d}{2}]$ is the integer part of $%
\frac{d}{2}$.

\begin{remark}
{Assumption \textbf{A1} is satisfied if and only if for all $t\in \lbrack
0,T],\xi \in \mathbf{R}^{d},|\xi |=1,$ 
\begin{eqnarray*}
(B(t)\xi ,\xi ) &\geq &\mu ,~\alpha =2, \\
\int_{S^{d-1}}|(w,\xi )|^{\alpha }m^{(\alpha )}(t,w)\mu _{d-1}(dw) &\geq
&\mu ,\quad \alpha \in (0,2).
\end{eqnarray*}%
The last condition holds with some constant $\mu >0$ if, for example, there
is a Borel set $\Gamma \subseteq S^{d-1}$ such that $\mu _{d-1}(\Gamma )>0$
and} 
\begin{equation*}
\inf_{t\in \lbrack 0,T],w\in \Gamma }m^{(\alpha )}(t,w)>0. 
\end{equation*}
\end{remark}

\begin{definition}
Let $\alpha \in (0,2],\ \beta \in (0,1),\ p\geq 2,$ $f\in B_{p}(H),\ g\in
B_{l,p}(H\times U),l=2,p.$ We say that $u\in {C}_{p}^{\alpha ,\beta }(H)$ is
a solution of (\ref{eq1}) if for each $(t,x)\in H$ $\mathbf{P}$-a.s.%
\begin{equation*}
u(t,x)=\int_{0}^{t}\big[A^{(\alpha )}u(s,x)-\lambda u(s,x)+f(s,x)\big]%
ds+\int_{0}^{t}\int_{U}g(s,x,\upsilon )q(ds,d\upsilon ).\label{defs} 
\end{equation*}
\end{definition}

Now we state the main result of this paper.

\begin{theorem}
\label{main}Let $\alpha ,\alpha ^{\prime }\in (0,2],\ \beta ,\beta ^{\prime
}\in (0,1)$, $p\geqslant 2$ and assumptions \textbf{A1, A2} be satisfied.
Assume that $\alpha (1-1/p)+\beta =\alpha ^{\prime }+\beta ^{\prime }$, $%
f\in C_{p}^{0,\beta }(H)$ and $g\in C_{r,p}^{\alpha ^{\prime },\beta
^{\prime }}(H\times U)$, $r=2,p$.

Then there is a unique solution $u\in {C}_{p}^{\alpha ,\beta }(H)$ to (\ref%
{eq1}). Moreover, there is a constant $C$ depending only on $\alpha ,\beta
,p,d,\mu ,C^{(\alpha )},T$ such that the following estimates hold:

(i) 
\begin{equation*}
\Vert u\Vert _{\alpha ,\beta ;p}\leqslant C\bigg(\Vert f\Vert _{0,\beta
;p}+\sum_{r=2,p}\Vert g\Vert _{\alpha ^{\prime },\beta ^{\prime };r,p}\bigg)%
, 
\end{equation*}

(ii) 
\begin{equation*}
\Vert u\Vert _{0,\beta ;p}\leq C\Bigl(T\wedge \frac{1}{\lambda }\Bigr)^{%
\frac{1}{p}}\biggl(\Vert f\Vert _{0,\beta ;p}+\sum_{r=2,p}\Vert g\Vert
_{0,\beta ;r,p}\biggr); 
\end{equation*}

(iii) for $0\leq t\leq t^{\prime }\leq T$ 
\begin{equation*}
\Vert u(t^{\prime },\cdot )-u(t,\cdot )\Vert _{\alpha ^{\prime },\beta
^{\prime };p}\leq C(t^{\prime }-t)^{\frac{1}{p}}\bigg(\Vert f\Vert _{0,\beta
;p}+\sum_{r=2,p}\Vert g\Vert _{\alpha ^{\prime }\beta ^{\prime };r,p}\bigg). 
\end{equation*}
\end{theorem}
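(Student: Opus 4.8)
The plan is to establish the theorem by the classical continuation-method-plus-a-priori-estimates scheme, reducing the general case to the model equation treated in Section 3 for smooth data. First, I would prove the a priori estimate (i) for sufficiently regular solutions. The key is the representation formula: since the operator $A^{(\alpha)}$ has symbol $-|\xi|^\alpha M^{(\alpha)}(t,\xi)$ with $\mathrm{Re}\,M^{(\alpha)}\geq\mu>0$ by \textbf{A1}, it generates a (nonrandom) two-parameter semigroup of convolution operators $G_{s,t}$ with kernels whose spatial Fourier transform is $\exp\bigl(-\int_s^t|\xi|^\alpha M^{(\alpha)}(r,\xi)\,dr\bigr)$; one gets, via Duhamel,
\begin{equation*}
u(t,x)=\int_0^t e^{-\lambda(t-s)}G_{s,t}f(s,\cdot)(x)\,ds+\int_0^t\!\!\int_U e^{-\lambda(t-s)}G_{s,t}g(s,\cdot,\upsilon)(x)\,q(ds,d\upsilon).
\end{equation*}
The first (drift) term is handled by deterministic parabolic estimates for $A^{(\alpha)}$ exactly as in \cite{mip2}: smoothing of order $\alpha$ in space balanced against the $(t-s)^{-1}$-type time singularity, together with \textbf{A2} to control the H\"older seminorm of the kernel. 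The stochastic term is the genuinely new point: one takes $L_p(\Omega)$-norms, applies the Burkholder--Davis--Gundy inequality for the discontinuous stochastic integral against $q(ds,d\upsilon)$ (this is where the auxiliary moment estimates for discontinuous martingales from Section 3 enter, and why one needs $g\in B_{r,p}$ for both $r=2$ and $r=p$ — the quadratic variation contributes the $r=2$ norm and the jumps the $r=p$ norm), and then applies the same deterministic kernel estimates to $G_{s,t}g$. The scaling identity $\alpha(1-1/p)+\beta=\alpha'+\beta'$ is precisely what makes the time integration $\int_0^t(t-s)^{-(\cdots)}\,ds$ converge after one pays a factor $(t-s)^{1/p}$ from the BDG/stochastic Fubini step, converting $\alpha'+\beta'$ regularity of $g$ into $\alpha+\beta$ regularity of $u$.

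Second, with estimate (i) in hand for smooth input functions — for which existence of a solution in $C_p^{\alpha,\beta}(H)$ is provided by the Section 3 results — I would pass to general $f\in C_p^{0,\beta}(H)$, $g\in C_{r,p}^{\alpha',\beta'}(H\times U)$ by approximation: choose smooth $f_n,g_n$ converging to $f,g$ in the respective norms (mollification in $x$ and truncation of $U$ along the exhausting sequence $U_n$), let $u_n$ be the corresponding solutions, and use the linearity of the equation together with estimate (i) applied to $u_n-u_m$ to conclude that $(u_n)$ is Cauchy in $C_p^{\alpha,\beta}(H)$. The limit $u$ satisfies the integral identity of the Definition because each term passes to the limit (the stochastic integral term converges in $L_p(\Omega)$ by BDG and the convergence $g_n\to g$ in $B_{r,p}$). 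This gives existence and, simultaneously, estimate (i) for the limit.

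Third, uniqueness: if $u$ solves (\ref{eq1}) with $f=g=0$, then again by the Duhamel representation (or by applying $G_{t,t'}$ and using the semigroup property, i.e. a standard argument showing any $C_p^{\alpha,\beta}$-solution equals the one built by the representation formula) one gets $u\equiv0$; alternatively one subtracts two solutions and invokes estimate (ii) with $f=g=0$. Estimate (ii) itself follows from the representation formula by keeping the factor $e^{-\lambda(t-s)}$ and noting $\int_0^t e^{-p\lambda(t-s)}\,ds\leq(T\wedge\lambda^{-1})$ up to a constant, while only using the boundedness (not the smoothing) of $G_{s,t}$ on $C_p^{0,\beta}$; the stochastic term again costs one power $1/p$ from BDG, producing the exponent $\tfrac1p$ on $T\wedge\lambda^{-1}$. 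Estimate (iii) is proved the same way: write $u(t',\cdot)-u(t,\cdot)$ from the representation, split into the integral over $[0,t]$ of $(G_{s,t'}-G_{s,t})(\cdot)$ plus the integral over $[t,t']$, estimate the semigroup difference by $\|(G_{s,t'}-G_{s,t})h\|_{\alpha',\beta';p}\leq C(t'-t)^{\kappa}(s\text{-singular factor})\,\|h\|$ for appropriate $\kappa$, and the tail integral over $[t,t']$ contributes the extra $(t'-t)^{1/p}$ via BDG on the stochastic part; the exponents again close because of the relation $\alpha(1-1/p)+\beta=\alpha'+\beta'$.

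The main obstacle I anticipate is the stochastic-integral estimate in step one: combining the BDG inequality for jump martingales with the deterministic H\"older-space bounds on the convolution semigroup $G_{s,t}$ in a way that (a) correctly tracks the two exponents $r=2$ and $r=p$ coming from quadratic variation versus jumps, and (b) produces exactly the borderline time-integrability dictated by $\alpha(1-1/p)+\beta=\alpha'+\beta'$, with the compensating $(t-s)^{1/p}$ factor from a stochastic Fubini / maximal-inequality argument. Everything else is either deterministic parabolic theory imported from \cite{mip, mip2} or routine linear-functional-analytic approximation and continuation.
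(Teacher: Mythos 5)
Your overall strategy is the one the paper actually follows: define $u=R_{\lambda }f+\widetilde{R}_{\lambda }g$ through the convolution kernel $G_{s,t}^{\lambda }=\mathcal{F}^{-1}\{e^{-\lambda (t-s)}K_{s,t}^{0}\}$, estimate the drift term deterministically and the stochastic term via Burkholder--Davis--Gundy-type moment bounds for the jump martingale (which is exactly where the two norms $r=2$ and $r=p$ enter, as you say), solve first for smooth data, and reduce uniqueness to the smooth deterministic case. However, there are two concrete gaps. First, the approximation step as you state it cannot be carried out: smooth functions are \emph{not} dense in H\"{o}lder spaces (their closure is the little H\"{o}lder class), so one cannot ``choose smooth $f_{n},g_{n}$ converging to $f,g$ in the respective norms'' and conclude that $(u_{n})$ is Cauchy in $C_{p}^{\alpha ,\beta }(H)$. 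The paper's Lemma \ref{lem5} gives only $\Vert u_{n}-u\Vert _{\alpha ,\beta ^{\prime };p}\rightarrow 0$ for $\beta ^{\prime }<\beta $, together with a uniform bound and a Fatou-type lower semicontinuity of the norm; accordingly, the paper proves the estimates (i)--(iii) \emph{directly} for $u=R_{\lambda }f+\widetilde{R}_{\lambda }g$ (Lemmas \ref{lem7}, \ref{lem8}) and uses the approximation only to verify, in the much weaker $B_{p}$ topology, that $u$ satisfies the integral identity. Your scheme needs the same repair. Second, your fallback uniqueness argument (``subtract two solutions and invoke estimate (ii) with $f=g=0$'') is circular: estimate (ii) is derived from the representation formula, i.e.\ it is an estimate for the \emph{constructed} solution, not an a priori estimate for an arbitrary $C_{p}^{\alpha ,\beta }$-solution. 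The non-circular route, which the paper takes (Corollary \ref{lu} plus Lemma \ref{l0}), is to mollify an arbitrary solution of the homogeneous equation, reduce to a smooth solution, and kill it by the It\^{o} formula along the dual stable process.

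One further caution on the quantitative step: at the exact endpoint $\alpha (1-1/p)+\beta =\alpha ^{\prime }+\beta ^{\prime }$, the naive ``smoothing of order $\alpha $ against a $(t-s)^{-1}$-type singularity'' produces a borderline (logarithmically divergent) time integral. The paper avoids this by working on Littlewood--Paley blocks: the kernel estimate $\int |h_{s,t}^{\lambda ,j}|dx\leq Ce^{-c2^{j\alpha }(t-s)}\sum_{k}[2^{j\alpha }(t-s)]^{k}$ yields $\bigl(\int_{0}^{t}(\int |h_{s,t}^{\lambda ,j}|dx)^{r}ds\bigr)^{1/r}\leq C2^{-j\alpha /r}$, and the final bound is a supremum over $j$ rather than a sum or integral, so nothing diverges. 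If you pursue the classical Schauder-type cancellation argument instead, you must build in the analogous cancellation by hand; your write-up does not indicate how.
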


\begin{remark}
{If $\alpha ^{\prime }=\alpha (1-1/p)$, then $\beta ^{\prime }=\beta .$
Lemma \ref{lem4} below indicates that we could assume this without any loss
of generality.}

{If $p=2,$ one can take $\alpha ^{\prime }=\alpha /2$ and $\beta ^{\prime
}=\beta $. In this case the estimates of Theorem \ref{main} are similar to
the corresponding estimates of Lemma 17 \cite{mip2} with $p=2$ for the Zakai
equation driven by a Wiener process.}
\end{remark}

\section{Auxiliary results}

\subsection{Moment estimates}

First we prove some discontinuous martingale moment estimates. Denote $%
L_{loc}^{2,p}$ the space of all $\mathcal{R}(\mathbb{F})\otimes \mathcal{U}$%
-measurable functions $g(t,\upsilon )=g(\omega ,t,\upsilon )$ such that $%
\mathbf{P}$-a.s.%
\begin{equation*}
\int_{0}^{T}\int_{U}|g(t,\upsilon )|^{p}\Pi (d\upsilon
)dt+\int_{0}^{T}\int_{U}g(t,\upsilon )^{2}\Pi (d\upsilon )dt<\infty . 
\end{equation*}

\begin{lemma}
\label{hl0}Let $p\geq 2,g\in L_{loc}^{2,p}$ and%
\begin{equation*}
Q_{t}=\int_{0}^{t}\int_{U}g(s,\upsilon )q(ds,d\upsilon ),0\leq t\leq T. 
\end{equation*}%
Then there is a constant $C=C(p)$ such that for any $\mathbb{F}$-stopping
time $\tau \leq T,$ 
\begin{eqnarray}
\mathbf{E}\big[\sup_{t\leq \tau }|Q_{t}|^{p}\big] &\leq &C\mathbf{E}\bigg[%
\int_{0}^{\tau }\int_{U}|g(s,\upsilon )|^{p}\Pi (d\upsilon )ds+  \notag \\
&&+\bigg(\int_{0}^{\tau }\int_{U}g(s,\upsilon )^{2}\Pi (d\upsilon )ds\bigg)%
^{p/2}\bigg]  \label{eight}
\end{eqnarray}%
Moreover, if $\sup_{s,\upsilon }|g(s,\upsilon )|<\infty $ $\mathbf{P}$-a.s.,
then for each $\varepsilon >0$ there is a constant $C(\varepsilon ,p)$ such
that%
\begin{eqnarray}
\mathbf{E}\big[\sup_{t\leq \tau }|Q_{t}|^{p}\big] &\leq &\varepsilon \mathbf{%
E}\Big[\sup_{0\leq s\leq \tau ,\upsilon }|g(s,\upsilon )|^{p}\Big]+  \notag
\\
&&+C({\varepsilon },p)\mathbf{E}\bigg[\bigg(\int_{0}^{\tau
}\int_{U}g(s,\upsilon )^{2}\Pi (d\upsilon )ds\bigg)^{p/2}\bigg].
\label{nine}
\end{eqnarray}
\end{lemma}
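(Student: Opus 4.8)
The estimate (\ref{eight}) is the standard Bichteler--Jacod--type moment inequality for purely discontinuous martingales, so I would start from the Burkholder--Davis--Gundy inequality applied to the martingale $Q_t$: since $Q$ has no continuous part and its jumps are $\Delta Q_s = g(s,\upsilon)$ at the jump times of $p$, the quadratic variation is $[Q]_\tau = \int_0^\tau\int_U g(s,\upsilon)^2 p(ds,d\upsilon)$. Thus $\mathbf{E}[\sup_{t\le\tau}|Q_t|^p]\le C\,\mathbf{E}[[Q]_\tau^{p/2}]$. To pass from the random measure $p$ to its compensator $\Pi(d\upsilon)ds$, I would split $[Q]_\tau^{p/2}$: when $p/2\ge 1$ one cannot simply take expectations inside, so I would write $\int_0^\tau\int g^2\,p(ds,d\upsilon) = \int_0^\tau\int g^2\,q(ds,d\upsilon) + \int_0^\tau\int g^2\,\Pi(d\upsilon)ds$, raise to the power $p/2$ using $(a+b)^{p/2}\le 2^{p/2-1}(a^{p/2}+b^{p/2})$, and handle the martingale term $N_\tau = \int_0^\tau\int g^2\,q(ds,d\upsilon)$ by BDG again: $\mathbf{E}[|N_\tau|^{p/2}]\le C\,\mathbf{E}[(\int_0^\tau\int g^4\,p(ds,d\upsilon))^{p/4}]$. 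Iterating this dyadically a finite number $k$ of times (until the exponent $p/2^k$ drops below $1$, at which point one may take the expectation inside the integral against $p$ and replace $p$ by $\Pi\,ds$ for free) produces, after collecting terms, exactly the two terms on the right of (\ref{eight}): the $\int_0^\tau\int |g|^p\,\Pi(d\upsilon)ds$ term arising from the ``deepest'' iterate and the $(\int_0^\tau\int g^2\,\Pi(d\upsilon)ds)^{p/2}$ term from the compensator parts peeled off at each stage. A cleaner alternative is to invoke the Novikov/Bichteler--Jacod inequality directly as a known result; I would present the iteration since it is self-contained.

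For (\ref{nine}), I would start from (\ref{eight}) and estimate the first term on its right-hand side by pulling out the supremum of $|g|^{p-2}$:
\begin{equation*}
\int_0^\tau\int_U |g(s,\upsilon)|^p\,\Pi(d\upsilon)ds \le \Big(\sup_{0\le s\le\tau,\upsilon}|g(s,\upsilon)|^{p-2}\Big)\int_0^\tau\int_U g(s,\upsilon)^2\,\Pi(d\upsilon)ds.
\end{equation*}
Then I would apply Young's inequality $ab\le \delta a^{q}+C(\delta)b^{q'}$ with the conjugate exponents $q = p/(p-2)$ and $q' = p/2$ (valid since $p\ge 2$; the case $p=2$ is trivial as the first term in (\ref{eight}) already has the right form), choosing $\delta$ proportional to $\varepsilon$. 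This converts the product into $\varepsilon\sup_{s,\upsilon}|g|^p + C(\varepsilon,p)(\int_0^\tau\int g^2\,\Pi(d\upsilon)ds)^{p/2}$, and absorbing the constant $C(p)$ from (\ref{eight}) into $\varepsilon$ and $C(\varepsilon,p)$ gives (\ref{nine}).

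The main technical point to get right is the finite induction in the proof of (\ref{eight}): one must check that the constants stay finite and depend only on $p$, and that at each stage the stochastic integral against $q$ of the new (nonnegative, hence bounded-below, but possibly unbounded-above) integrand is a genuine martingale on $[0,\tau]$ so that BDG applies — this is where the assumption $g\in L^{2,p}_{loc}$ and the localizing sequence $U_n$ matter, since one may first prove everything for $g$ bounded and supported on some $U_n$ and then pass to the limit by monotone/dominated convergence. Everything else is Hölder's and Young's inequalities. The hard part is thus purely bookkeeping of the iteration; there is no conceptual obstacle.
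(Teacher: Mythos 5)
Your proof of (\ref{nine}) is exactly the paper's: pull out $\sup|g|^{p-2}$ from the $p$-th power term in (\ref{eight}) and apply Young's inequality with exponents $p/(p-2)$ and $p/2$. For (\ref{eight}), however, you take a genuinely different route. You use the classical Bichteler--Jacod dyadic iteration: after BDG, decompose $\int g^{2}\,p(ds,d\upsilon)=\int g^{2}\,q(ds,d\upsilon)+\int g^{2}\,\Pi(d\upsilon)ds$, apply BDG again to the new martingale at exponent $p/2$, and descend until the exponent falls below $1$. The paper instead stays at the first level: writing $A_{t}=\int_{0}^{t}\int g^{2}\,p(ds,d\upsilon)$ and $q=p/2$, it expands $A_{\tau}^{q}=\int_{0}^{\tau}\int[(A_{s-}+g^{2})^{q}-A_{s-}^{q}]\,p(ds,d\upsilon)$ as a sum over jumps, takes expectations against the compensator, uses the elementary two-sided bound $c_{q}(b^{q}+a^{q-1}b)\leq(a+b)^{q}-a^{q}\leq C_{q}(b^{q}+a^{q-1}b)$ to get $\mathbf{E}[A_{\tau}^{q}]\leq C\{\mathbf{E}\int|g|^{p}\,\Pi\,ds+\mathbf{E}[A_{\tau}^{q-1}L_{\tau}]\}$, and absorbs the cross term by Young's inequality in a single step. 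The paper's argument buys a shorter proof with no induction and, as a by-product, the reverse inequality recorded in Remark \ref{hr0}, which your one-sided iteration does not give. Your approach is also correct in substance, but one step is glossed over: the iteration does not produce ``exactly the two terms'' of (\ref{eight}) --- at each intermediate level $j$ you peel off a compensator term $\mathbf{E}[(\int|g|^{2^{j}}\Pi\,ds)^{p/2^{j}}]$, and for $1<j<k$ these must still be interpolated between the two endpoint quantities via H\"older's inequality ($\int|g|^{2^{j}}\Pi\,ds\leq(\int g^{2}\Pi\,ds)^{1-\lambda_{j}}(\int|g|^{p}\Pi\,ds)^{\lambda_{j}}$ with $2^{j}=2(1-\lambda_{j})+p\lambda_{j}$) followed by Young's inequality. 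This is standard bookkeeping and the exponents do close up, but it should be stated rather than folded into ``collecting terms.''
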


\begin{proof}
Let%
\begin{equation*}
A_{t}=\int_{0}^{t}\int_{U}g(s,\upsilon )^{2}p(ds,d\upsilon ),\quad
L_{t}=\int_{0}^{t}\int_{U}g(s,\upsilon )^{2}\Pi (d\upsilon )ds,\quad t\geq
0. 
\end{equation*}%
By the Burkholder--Davis--Gundy inequality, for each $\mathbb{F}$-stopping
time $\tau $%
\begin{equation*}
\mathbf{E}\big[\sup_{t\leq \tau }|Q_{t}|^{p}\big]\leq C_{p}\mathbf{E}%
[A_{\tau }^{p/2}]. 
\end{equation*}%
Denoting $q=p/2\geq 1$, we have%
\begin{equation*}
A_{\tau }^{q}=\sum_{s\leq \tau }\big[(A_{s-}+\Delta A_{s})^{q}-A_{s-}^{q}%
\big]=\int_{0}^{\tau }\int_{U}\big[(A_{s-}+g(s,\upsilon )^{2})^{q}-A_{s-}^{q}%
\big]p(ds,d\upsilon ), 
\end{equation*}%
and%
\begin{equation*}
\mathbf{E}[A_{\tau }^{q}]=\mathbf{E}\int_{0}^{\tau }\int_{U}\big[%
(A_{s-}+g(s,\upsilon )^{2})^{q}-A_{s-}^{q}\big]\Pi (d\upsilon )ds. 
\end{equation*}%
Since there are two positive constants $c_{q},C_{q}$ such that for all
non-negative numbers $a,b$%
\begin{equation*}
C_{q}\bigl(b^{q}+a^{q-1}b\bigr)\geq (a+b)^{q}-a^{q}\geq c_{q}\bigl(%
b^{q}+a^{q-1}b\bigr), 
\end{equation*}%
we have%
\begin{eqnarray}
&&C_{q}\mathbf{E}\int_{0}^{\tau }\int_{U}\big[|g(s,\upsilon
)|^{p}+A_{s-}^{q-1}g(s,\upsilon )^{2}\big]\Pi (d\upsilon )ds\geq \mathbf{E}%
[A_{\tau }^{q}]  \label{hf0} \\
&&\qquad \geq c_{q}\mathbf{E}\int_{0}^{\tau }\int_{U}\big[|g(s,\upsilon
)|^{p}+A_{s-}^{q-1}g(s,\upsilon )^{2}\big]\Pi (d\upsilon )ds,  \notag
\end{eqnarray}%
Hence, 
\begin{equation*}
\mathbf{E}[A_{\tau }^{q}]\leq C_{q}\bigg\{\mathbf{E}\int_{0}^{\tau
}\int_{U}|g(s,\upsilon )|^{p}\Pi (d\upsilon )ds+\mathbf{E[}A_{\tau
}^{q-1}L_{\tau }]\bigg\}. 
\end{equation*}%
According to Young's inequality, for each $\varepsilon >0$ there is a
constant $C_{\varepsilon }$ such that 
\begin{equation*}
A_{\tau }^{q-1}L_{\tau }\leq \varepsilon A_{\tau }^{q}+C_{\varepsilon
}L_{\tau }^{q}. 
\end{equation*}%
Therefore, there is a constant $C$ such that 
\begin{equation*}
\mathbf{E}[A_{\tau }^{q}]\leq \frac{1}{2}\mathbf{E}[A_{\tau }^{q}]+C\mathbf{E%
}\bigg(\int_{0}^{\tau }\int_{U}|g(s,\upsilon )|^{p}\Pi (d\upsilon
)ds+L_{\tau }^{q}\bigg)
\end{equation*}%
and (8) follows.

If $\sup_{s,\upsilon }|g(s,\upsilon )|<\infty $ $\mathbf{P}$-a.s., then by (%
\ref{eight}) 
\begin{equation*}
\mathbf{E}\big[A_{\tau }^{q}]\leq C\mathbf{E[}\sup_{s,\upsilon
}|g(s,\upsilon )|^{p-2}L_{\tau }+L_{\tau }^{q}\big]. 
\end{equation*}%
Applying Young's inequality, we obtain (\ref{nine}). The lemma is proved.
\end{proof}

\begin{remark}
\label{hr0}{Under assumptions of Lemma \ref{hl0}, the estimates (\ref{hf0})
imply that}%
\begin{equation*}
\mathbf{E}\int_{0}^{\tau }\int_U |g(s,\upsilon )|^{p}\Pi (d\upsilon )ds\leq C%
\mathbf{E}[A_{\tau }^{p/2}]\leq C\mathbf{E}\big[\sup_{t\leq \tau }Q_{t}^{p}%
\big]. 
\end{equation*}
\end{remark}

We will need the following estimate of stochastic integrals (cf. \cite{m},
Lemma 1).

\begin{lemma}
\label{hl1}Let $(\mu _{s})$ be a measurable family of $\sigma $-finite
measures on a measurable space $(A,\mathcal{A})$. Let $g$ be a $\mathcal{R}(%
\mathbb{F})\otimes \mathcal{A\otimes U}$-measurable function on $[0,T]\times
A\times U$ such that 
\begin{equation*}
\int_{0}^{T}\int_{U}\bigg(\int_{A}g(s,a,\upsilon )\mu _{s}(da)\bigg)^{2}\Pi
(d\upsilon )ds<\infty 
\end{equation*}%
$\mathbf{P}$-a.s. Then for each $p\geq 2$ there is a constant $C$ such that%
\begin{eqnarray*}
&&\bigg\vert\sup_{0\leq t\leq T}\bigg\vert\int_{0}^{t}\int_{U}\int_{A}g(t,a,%
\upsilon )\mu _{t}(da)q(dt,d\upsilon )\bigg\vert\,\bigg\vert_{p} \\
&&\qquad \leq C\sum_{l=2,p}\sup_{t,a}|g(t,a,\cdot )|_{l,p}\bigg(%
\int_{0}^{T}|\mu _{t}|(A)^{l}dt\bigg)^{1/l},
\end{eqnarray*}%
where $|\mu _{t}|$ is the variation of $\mu _{t}$.
\end{lemma}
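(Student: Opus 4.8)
The plan is to reduce to Lemma~\ref{hl0} applied to the integrated kernel $G(s,\upsilon):=\int_A g(s,a,\upsilon)\,\mu_s(da)$ and then estimate the two resulting terms by Jensen's inequality. I may assume the right-hand side of the claimed bound is finite (otherwise there is nothing to prove); then $\int_0^T|\mu_t|(A)^l\,dt<\infty$ for $l=2,p$, so $|\mu_t|(A)<\infty$ for a.e. $t$, and $\Lambda_l:=\sup_{t,a}|g(t,a,\cdot)|_{l,p}<\infty$ for $l=2,p$.

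First I would note that $G$ is $\mathcal{R}(\mathbb{F})\otimes\mathcal{U}$-measurable (a standard monotone-class argument, using $\sigma$-finiteness of the $\mu_s$) and that the process in the statement equals $Q_t=\int_0^t\int_U G(s,\upsilon)\,q(ds,d\upsilon)$. Since $|G(s,\upsilon)|\leq\int_A|g(s,a,\upsilon)|\,|\mu_s|(da)$, applying Jensen's inequality with respect to the probability measure $|\mu_s|(da)/|\mu_s|(A)$ and the convex functions $x\mapsto x^p$, $x\mapsto x^2$ yields the pointwise bounds
$$|G(s,\upsilon)|^p\leq|\mu_s|(A)^{p-1}\int_A|g(s,a,\upsilon)|^p\,|\mu_s|(da),\qquad G(s,\upsilon)^2\leq|\mu_s|(A)\int_A g(s,a,\upsilon)^2\,|\mu_s|(da).$$
Integrating the first bound against $\Pi(d\upsilon)\,ds$, taking expectations and using Fubini (the $\mu_s$ are non-random) together with $\mathbf{E}\int_U|g(s,a,\upsilon)|^p\Pi(d\upsilon)=|g(s,a,\cdot)|_{p,p}^p\leq\Lambda_p^p$ gives
$$\mathbf{E}\int_0^T\int_U|G(s,\upsilon)|^p\,\Pi(d\upsilon)\,ds\leq\Lambda_p^p\int_0^T|\mu_s|(A)^p\,ds<\infty,$$
which, combined with the hypothesis on the $L^2$-integral of $G$, shows $G\in L^{2,p}_{loc}$, so that Lemma~\ref{hl0} applies.

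By Lemma~\ref{hl0} there is $C=C(p)$ with
$$\mathbf{E}\big[\sup_{t\leq T}|Q_t|^p\big]\leq C\,\mathbf{E}\int_0^T\int_U|G|^p\,\Pi(d\upsilon)\,ds+C\,\mathbf{E}\Big(\int_0^T\int_U G(s,\upsilon)^2\,\Pi(d\upsilon)\,ds\Big)^{p/2},$$
and the first term is already controlled by $C\Lambda_p^p\int_0^T|\mu_s|(A)^p\,ds$. For the second, I would introduce the finite measure $\nu(ds\,da)=|\mu_s|(A)\,|\mu_s|(da)\,ds$ on $[0,T]\times A$ with total mass $N:=\int_0^T|\mu_s|(A)^2\,ds$; the pointwise bound for $G^2$ gives $\int_0^T\int_U G^2\,\Pi(d\upsilon)\,ds\leq\int_{[0,T]\times A}h\,d\nu$ where $h(s,a):=\int_U g(s,a,\upsilon)^2\Pi(d\upsilon)$, and Jensen's inequality for the probability measure $\nu/N$ with the convex function $x\mapsto x^{p/2}$ (here $p\geq2$), followed by Fubini and $\mathbf{E}[h(s,a)^{p/2}]=|g(s,a,\cdot)|_{2,p}^p\leq\Lambda_2^p$, yields
$$\mathbf{E}\Big(\int_0^T\int_U G^2\,\Pi(d\upsilon)\,ds\Big)^{p/2}\leq N^{p/2-1}\int_{[0,T]\times A}\mathbf{E}[h^{p/2}]\,d\nu\leq\Lambda_2^p\,N^{p/2}.$$
Adding the two estimates and taking $p$-th roots (using $(a+b)^{1/p}\leq a^{1/p}+b^{1/p}$) produces precisely
$$\big|\sup_{t\leq T}|Q_t|\big|_p\leq C\sum_{l=2,p}\sup_{t,a}|g(t,a,\cdot)|_{l,p}\Big(\int_0^T|\mu_t|(A)^l\,dt\Big)^{1/l}.$$
I expect the only genuinely delicate point to be the verification that $G\in L^{2,p}_{loc}$, so that Lemma~\ref{hl0} is legitimately applicable; the remainder is routine bookkeeping with Jensen's inequality and Fubini's theorem, the one thing to watch being consistent normalization by $|\mu_s|(A)$ and $N$ and consistent use of the exponents $p$ and $p/2$.
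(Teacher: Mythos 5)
Your proof is correct and follows essentially the same route as the paper: both arguments reduce the claim to Lemma \ref{hl0} applied to the integrated kernel $G(s,\upsilon)=\int_{A}g(s,a,\upsilon)\,\mu_{s}(da)$ and then estimate the two resulting terms, arriving at the identical bound. The only difference is in the elementary step that follows: where you normalize by $|\mu_{s}|(A)$ (and by $N$) and invoke Jensen's inequality together with Fubini, the paper applies the generalized Minkowski inequality twice, first to move the $L_{p}(\Omega)$-norm inside the $dt\otimes\Pi$-integral and then to pull the $\mu_{t}(da)$-integral outside the $|\cdot|_{l,p}$-norm; the two devices are interchangeable here, and your explicit verification that $G\in L_{loc}^{2,p}$ (so that Lemma \ref{hl0} is legitimately applicable) is a point the paper leaves implicit.
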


\begin{proof}
By Lemma \ref{hl0}, we have%
\begin{eqnarray*}
&&\mathbf{E}\bigg[\sup_{0\leq t\leq T}\bigg\vert\int_{0}^{t}\int_{U}%
\int_{A}g(t,a,\upsilon )\mu _{t}(da)q(dt,d\upsilon )\bigg\vert^{p}\bigg] \\
&&\qquad \leq C\sum_{l=2,p}\mathbf{E}\bigg[\bigg(\int_{0}^{T}\int_{U}%
\bigg\vert\int_{A}g(t,a,\upsilon )\mu _{t}(da)\bigg\vert^{l}\Pi (d\upsilon
)dt\bigg)^{p/l}\bigg].
\end{eqnarray*}%
By generalized Minkowsky's inequality,%
\begin{eqnarray*}
&&\bigg\vert\bigg(\int_{0}^{T}\int_{U}\bigg\vert\int_{A}g(t,a,\upsilon )\mu
_{t}(da)\bigg\vert^{l}\Pi (d\upsilon )dt\bigg)^{1/l}\bigg\vert_{p} \\
&&\qquad \leq \bigg(\int_{0}^{T}\bigg\vert\int_{A}g(t,a,\cdot )\mu _{t}(da)%
\bigg\vert_{l,p}^{l}dt\bigg)^{1/l} \\
&&\qquad \leq \bigg(\int_{0}^{T}\bigg(~\int_{A}|g(t,a,\cdot )|_{l,p}|\mu
_{t}|(da)\bigg)^{l}dt\bigg)^{1/l} \\
&&\qquad \leq \sup_{t,a}|g(t,a,\cdot )|_{l,p}\bigg(\int_{0}^{T}|\mu
_{t}|(A)^{l}dt\bigg)^{1/l}.
\end{eqnarray*}%
The lemma is proved.
\end{proof}

\subsection{Solution for smooth input functions}

First we solve (\ref{eq1}) for smooth input functions $f,g.$

\begin{lemma}
\label{l0}Let $p\geq 2$, $f\in {C}_{p}^{\infty}(H),\ g\in {C}%
_{r,p}^{\infty}(H\times U),\ r=2,p.$

Then there is a unique $u\in {C}_{p}^{\infty}(H)$ solving (\ref{eq1}).
Moreover, $\mathbf{P}$-a.s. $u(t,x)$ is cadlag in $t$ and smooth in $x$.
Also, for each $(t,x)$ $\mathbf{P}$-a.s. 
\begin{equation*}
u(t,x)=R_{\lambda }f(t,x)+\widetilde{R}_{\lambda }g(t,x), 
\end{equation*}%
where%
\begin{eqnarray}
R_{\lambda }f(t,x) &=&\int_{0}^{t}\mathcal{F}^{-1}[K_{s,t}^{\lambda }(\xi )%
\mathcal{F}f(s,\xi )](x)ds,  \notag \\
&&  \label{rez} \\
\widetilde{R}_{\lambda }g(t,x) &=&\int_{0}^{t}\int_{U}\mathcal{F}%
^{-1}[K_{s,t}^{\lambda }(\xi )\mathcal{F}g(s,\xi ,\upsilon
)](x)q(ds,d\upsilon ),  \notag
\end{eqnarray}%
and%
\begin{equation*}
K_{s,t}^{\lambda }(\xi )=\exp \left\{ \int_{s}^{t}(\psi ^{(\alpha )}(r,\xi
)-\lambda )dr\right\} ,0\leq s\leq t\leq T. 
\end{equation*}
\end{lemma}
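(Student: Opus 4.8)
The plan, following \cite{mip} and \cite{mip2}, is to reduce (\ref{eq1}) to an ordinary linear SDE by Fourier transform in $x$: since $A^{(\alpha)}$ is the Fourier multiplier with symbol $\psi^{(\alpha)}(t,\xi)$, for each fixed $\xi$ the equation reads $d\,\mathcal{F}u(t,\xi)=[(\psi^{(\alpha)}(t,\xi)-\lambda)\mathcal{F}u(t,\xi)+\mathcal{F}f(t,\xi)]\,dt+\int_{U}\mathcal{F}g(t,\xi,\upsilon)\,q(dt,d\upsilon)$, a linear SDE with additive noise and the deterministic time-dependent coefficient $\psi^{(\alpha)}(t,\xi)-\lambda$, whose variation-of-constants solution is $\mathcal{F}u(t,\xi)=\int_{0}^{t}K_{s,t}^{\lambda}(\xi)\mathcal{F}f(s,\xi)\,ds+\int_{0}^{t}\int_{U}K_{s,t}^{\lambda}(\xi)\mathcal{F}g(s,\xi,\upsilon)\,q(ds,d\upsilon)$; inverting gives (\ref{rez}). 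Since $f,g$ are bounded and smooth but not Schwartz, I would phrase everything through the deterministic kernels $p_{s,t}^{\lambda}:=\mathcal{F}^{-1}[K_{s,t}^{\lambda}]$, writing
\[
R_{\lambda}f(t,x)=\int_{0}^{t}\bigl(p_{s,t}^{\lambda}\ast f(s,\cdot)\bigr)(x)\,ds,\qquad \widetilde{R}_{\lambda}g(t,x)=\int_{0}^{t}\int_{U}\bigl(p_{s,t}^{\lambda}\ast g(s,\cdot,\upsilon)\bigr)(x)\,q(ds,d\upsilon),
\]
and then proving in turn that (i) $p_{s,t}^{\lambda}$ is well behaved; (ii) $u:=R_{\lambda}f+\widetilde{R}_{\lambda}g\in C_{p}^{\infty}(H)$ and has a jointly measurable version smooth in $x$ and cadlag in $t$; (iii) $u$ solves (\ref{eq1}); (iv) the solution is unique.

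For (i): the exponent $\int_{s}^{t}\psi^{(\alpha)}(r,\xi)\,dr$ is again of L\'evy--Khinchine type (its L\'evy measure, Gaussian part and drift are the $r$-averages of those of $\psi^{(\alpha)}(r,\cdot)$), so $K_{s,t}^{\lambda}(\xi)$ is $e^{-\lambda(t-s)}$ times the characteristic function of an infinitely divisible law $\nu_{s,t}$; by assumption \textbf{A1}, $|K_{s,t}^{\lambda}(\xi)|\leqslant e^{-\lambda(t-s)}e^{-\mu(t-s)|\xi|^{\alpha}}$ is integrable, so $\nu_{s,t}$ has a density $p_{s,t}^{\lambda}\geqslant0$ with $\int_{\mathbf{R}^{d}}p_{s,t}^{\lambda}(x)\,dx=e^{-\lambda(t-s)}\leqslant1$. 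From $K_{s,t}^{\lambda}=K_{r,t}^{\lambda}K_{s,r}^{\lambda}$ one gets $p_{s,t}^{\lambda}=p_{r,t}^{\lambda}\ast p_{s,r}^{\lambda}$ for $s\leqslant r\leqslant t$; from $K_{s,t}^{\lambda}(\xi)-1=\int_{s}^{t}(\psi^{(\alpha)}(r,\xi)-\lambda)K_{s,r}^{\lambda}(\xi)\,dr$ one gets, for bounded smooth $h$, the identity $(p_{s,t}^{\lambda}\ast h)(x)-h(x)=\int_{s}^{t}\bigl(p_{s,r}^{\lambda}\ast(A_{r}^{(\alpha)}-\lambda)h\bigr)(x)\,dr$, where $A_{r}^{(\alpha)}$ denotes $A^{(\alpha)}$ with coefficients at time $r$; and $\nu_{s,t}\to\delta_{0}$ weakly as $s\uparrow t$, so $p_{s,t}^{\lambda}\ast h\to h$ uniformly for bounded uniformly continuous $h$. (Assumption \textbf{A2} is used only to guarantee that $\psi^{(\alpha)}$, hence $A^{(\alpha)}$ acting on bounded smooth functions, is well defined; the fine kernel estimates needed for the H\"older theorem, cf. \cite{mip2}, are not required here.)

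For (ii): convolution commutes with $\partial_{x}^{\gamma}$, so $\partial_{x}^{\gamma}R_{\lambda}f=\int_{0}^{t}p_{s,t}^{\lambda}\ast\partial_{x}^{\gamma}f(s,\cdot)\,ds$ and $\partial_{x}^{\gamma}\widetilde{R}_{\lambda}g=\int_{0}^{t}\int_{U}p_{s,t}^{\lambda}\ast\partial_{x}^{\gamma}g(s,\cdot,\upsilon)\,q(ds,d\upsilon)$, differentiation under the (stochastic) integral being justified by Lemma \ref{hl0} and $\|p_{s,t}^{\lambda}\|_{L_{1}(\mathbf{R}^{d})}\leqslant1$; then $\|\partial_{x}^{\gamma}R_{\lambda}f\|_{p}\leqslant T\|\partial_{x}^{\gamma}f\|_{p}$, while Lemma \ref{hl1} with $A=\mathbf{R}^{d}$ and $\mu_{s}(dy)=p_{s,t}^{\lambda}(x-y)\,dy$ (so $|\mu_{s}|(\mathbf{R}^{d})\leqslant1$) gives $\|\partial_{x}^{\gamma}\widetilde{R}_{\lambda}g\|_{p}\leqslant C\sum_{l=2,p}\|\partial_{x}^{\gamma}g\|_{l,p}$. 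Since $f\in C_{p}^{\infty}(H)$ and $g\in C_{l,p}^{\infty}(H\times U)$, these are finite, so $u\in C_{p}^{\infty}(H)$. For the regular version: $R_{\lambda}f$ is an ordinary integral of functions smooth in $x$, and $\widetilde{R}_{\lambda}g$ is handled by approximating $q$ with martingale measures carrying finitely many jumps (using $U=\cup_{n}U_{n}$, $\Pi(U_{n})<\infty$), for which the stochastic integral is a manifestly smooth-in-$x$, cadlag finite sum, and then passing to the limit with the smooth-in-$x$ moment bounds above.

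For (iii)--(iv): to check that $u$ solves the integral equation in the definition of a solution, substitute the representations, apply the identity of step (i) with $h=f(s,\cdot)$ and $h=g(s,\cdot,\upsilon)$, and use that $A_{r}^{(\alpha)}$, being a Fourier multiplier, commutes with $p_{s,r}^{\lambda}\ast$; moving $A_{r}^{(\alpha)}-\lambda$ onto the smooth input makes the resulting double integrands uniformly bounded in $L_{p}(\Omega)$, so ordinary Fubini (for the $R_{\lambda}f$ term) and a stochastic Fubini theorem (for the $\widetilde{R}_{\lambda}g$ term, legitimate by (i)--(ii)) allow the interchange that yields $R_{\lambda}f(t,x)-\int_{0}^{t}f(s,x)\,ds=\int_{0}^{t}(A^{(\alpha)}-\lambda)R_{\lambda}f(s,x)\,ds$ and its analogue for $\widetilde{R}_{\lambda}g$, hence (\ref{eq1}); the initial condition is immediate. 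For uniqueness, the difference $w$ of two $C_{p}^{\infty}(H)$ solutions satisfies (taking a common null set by joint continuity in $(t,x)$) the deterministic equation $w(t,x)=\int_{0}^{t}(A^{(\alpha)}-\lambda)w(s,x)\,ds$, $w(0,\cdot)=0$; fixing $(t_{0},x_{0})$, the function $s\mapsto\bigl(p_{s,t_{0}}^{\lambda}\ast w(s,\cdot)\bigr)(x_{0})$ is continuous on $[0,t_{0}]$ with vanishing derivative on $(0,t_{0})$ --- the two contributions $-\bigl((A_{s}^{(\alpha)}-\lambda)p_{s,t_{0}}^{\lambda}\ast w(s,\cdot)\bigr)(x_{0})$ and $\bigl(p_{s,t_{0}}^{\lambda}\ast(A_{s}^{(\alpha)}-\lambda)w(s,\cdot)\bigr)(x_{0})$ cancel by the same commutation --- hence constant, and its endpoint values $0$ at $s=0$ and $w(t_{0},x_{0})$ at $s\uparrow t_{0}$ (by the approximate identity) force $w\equiv0$. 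The main obstacle is thus step (iii) together with the regular-version construction in (ii): making all the (stochastic) Fubini and differentiation-under-the-integral interchanges rigorous and exhibiting a jointly measurable modification that is smooth in $x$ and cadlag in $t$. The analytic side in step (i) is by contrast soft, since $p_{s,t}^{\lambda}$ is merely a sub-probabilistic convolution kernel.
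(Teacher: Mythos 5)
Your plan is correct in substance but takes a genuinely different route from the paper for the existence half. You verify the Duhamel formula analytically: you treat $p_{s,t}^{\lambda}=\mathcal{F}^{-1}[K_{s,t}^{\lambda}]$ (the paper's $G_{s,t}^{\lambda}$) as a sub-probability convolution kernel, use the forward identity $(p_{s,t}^{\lambda}\ast h)-h=\int_{s}^{t}p_{s,r}^{\lambda}\ast(A_{r}^{(\alpha)}-\lambda)h\,dr$ together with ordinary and stochastic Fubini to check the equation, and prove uniqueness by the dual observation that $s\mapsto(p_{s,t_{0}}^{\lambda}\ast w(s,\cdot))(x_{0})$ is constant. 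The paper runs the probabilistic version of the same computations: it introduces an auxiliary stable process $Z$ with characteristic function $K_{s,t}^{0}$ on a product probability space, applies Lemma \ref{cl1} to the genuine semimartingale $w(t,x)=\int_{0}^{t}e^{\lambda s}f(s,x-Z_{s})ds+\int_{0}^{t}\int_{U}e^{\lambda s}g(s,x-Z_{s},\upsilon)q(ds,d\upsilon)$, uses the It\^o--Wentzell formula on $w(t,x+Z_{t})$, and averages out the $Z$-randomness via the conditional-expectation identity (\ref{corf2}); its uniqueness proof is It\^o's formula along a time-reversed copy of $Z$, i.e.\ exactly your duality argument in probabilistic dress. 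Your route is more elementary (no product space, no It\^o--Wentzell, no interchange of conditional expectation with stochastic integrals), and it isolates cleanly where \textbf{A1} enters (integrability of $K_{s,t}^{\lambda}$); the price is that the technical weight falls on two points you only sketch. The stochastic Fubini interchange in your step (iii) is fine (it is the theorem of \cite{mik3} that the paper also invokes, and the integrability hypotheses hold for smooth $f,g$). The weaker point is the cadlag-in-$t$ modification of $\widetilde{R}_{\lambda}g$: because the kernel $p_{s,t}^{\lambda}$ depends on the upper limit $t$, neither $\widetilde{R}_{\lambda}g(t,x)$ nor the tails over $U\setminus U_{n}$ are martingales in $t$, so Lemma \ref{hl0} yields no maximal inequality and ``passing to the limit with the moment bounds'' does not by itself give uniform-in-$t$ convergence of your truncations. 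The standard repair --- and in effect what the paper does at the end of its existence proof --- is to verify the equation first for each fixed $(t,x)$ and then read the cadlag smooth version off the right-hand side of (\ref{eq1}), whose stochastic part $\int_{0}^{t}\int_{U}g(s,x,\upsilon)q(ds,d\upsilon)$ has a $t$-independent integrand and hence falls under Lemma \ref{cl1}. With that adjustment your argument goes through.
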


\begin{proof}
\emph{Existence.} We take a complete probability space $(\Omega ^{\prime },%
\mathcal{F}^{\prime },\mathbf{P}^{\prime })$ with a filtration of $\sigma $%
-algebras $\mathbb{F}^{\prime }=(\mathcal{F}_{t}^{\prime })$ satisfying
usual conditions and an adapted stable process $Z_{t}=Z_{t}(\omega ^{\prime
}),\omega ^{\prime }\in \Omega ^{\prime },t\in \lbrack 0,T],$ on it defined
by%
\begin{eqnarray}
Z_{t} &=&\int_{0}^{t}\sqrt{B(s)}dW_{s},\alpha =2,  \notag \\
Z_{t} &=&\int_{0}^{t}\int_{\mathbf{R}_{0}^{d}}yq^{Z}(ds,dy),~\alpha \in
(1,2),  \notag \\
Z_{t} &=&\int_{0}^{t}b(s)ds+\int_{0}^{t}\int_{|y|\leq
1}yq^{Z}(ds,dy)+\int_{0}^{t}\int_{|y|>1}yp^{Z}(ds,dy),\alpha =1,
\label{maf1} \\
Z_{t} &=&\int_{0}^{t}\int_{\mathbf{R}_{0}^{d}}yp^{Z}(ds,dy),~\alpha \in
(0,1),  \notag
\end{eqnarray}%
where $W_{t}$ is a standard Wiener process in $\mathbf{R}^{d}$, $%
p^{Z}(ds,dy)=p^{Z}(\omega ^{\prime },ds,dy)$ is a Poisson point process on $%
[0,T]\times \mathbf{R}_{0}^{d}$ and 
\begin{equation}
q^{Z}(ds,dy)=p^{Z}(ds,dy)-m^{(\alpha )}(s,y)\frac{dy}{|y|^{d+\alpha }}ds
\label{z1}
\end{equation}%
is a $(\mathbb{F}^{\prime },\mathbf{P}^{\prime })$-martingale measure.

Consider the product of probability spaces%
\begin{equation*}
(\widetilde{\Omega},\mathcal{\widetilde{F}}^{\prime },\mathbf{\widetilde{P}}%
)=(\Omega \times \Omega ^{\prime },\mathcal{F\otimes F}^{\prime },\mathbf{%
P\times P}^{\prime }). 
\end{equation*}%
We will denote $\mathbf{\widetilde{E}}$ the expectation with respect to $%
\mathbf{\widetilde{P}}$. Let $\mathcal{\widetilde{F}}$ be the completion of $%
\mathcal{\widetilde{F}}^{\prime }$ and $\mathbb{\widetilde{F}=(\mathcal{%
\widetilde{F}}}_{t})$ be the usual augmentation of $\mathbb{(}\mathcal{F}%
_{t}\otimes \mathcal{F}_{t}^{\prime })$ (see \cite{delmey}). Let $\mathbb{%
\widetilde{F}}^{Z}=\mathbb{(\mathcal{\widetilde{F}}}_{t}^{Z})$ be the usual
augmentation of $\left( \mathcal{F\otimes F}_{t}^{\prime }\right) $ and $%
\mathbb{\widetilde{F}}^{q}=(\mathcal{\widetilde{F}}_{t}^{q})$ be the usual
augmentation of $\left( \mathcal{F}_{t}\mathcal{\otimes F}^{\prime }\right) $%
. Obviously, $q^{Z}(ds,dy)$ is $(\mathbb{\widetilde{F}}^{Z},\mathbf{%
\widetilde{P}})$- and $(\mathbb{\widetilde{F}},\mathbf{\widetilde{P}})$%
-martingale measure. Note that $q(dt,d\upsilon )$ is $(\mathbb{\widetilde{F}}%
,\mathbf{\widetilde{P}})$- and $(\mathbb{\widetilde{F}}^{q},\mathbf{%
\widetilde{P}}$)-martingale measure as well. Denote for a measurable
function $F$ on $\widetilde{\Omega}$%
\begin{equation*}
\mathbf{\widetilde{E}}^{Z}F=\int_{\Omega ^{\prime }}F(\omega ,\omega
^{\prime })\mathbf{P}^{\prime }(d\omega ^{\prime }). 
\end{equation*}%
First, we claim that for each $\mathcal{P}(\mathbb{\widetilde{F}}^{q}\mathbb{%
)\otimes }\mathcal{U}$-measurable function $g(s,\upsilon )$ such that%
\begin{equation*}
\mathbf{\widetilde{E}}\int_{0}^{T}\int_{U}g(s,\upsilon )^{2}\Pi (d\upsilon
)ds<\infty , 
\end{equation*}%
we have $\mathbf{P}$\textbf{-}a.s. for all $t$ 
\begin{equation}
\mathbf{\widetilde{E}}^{Z}\int_{0}^{t}\int_{U}g(s,\upsilon )q(ds,d\upsilon
)=\int_{0}^{t}\int_{U}\mathbf{\widetilde{E}}^{Z}[g(s,\upsilon
)]q(ds,d\upsilon ).  \label{corf2}
\end{equation}%
It is enough to show that%
\begin{equation}
\mathbf{\widetilde{E}}^{Z}\int_{0}^{t}\int_{U}1_{U_{m}}(\upsilon
)g(s,\upsilon )q(ds,d\upsilon )=\int_{0}^{t}\int_{U}1_{U_{m}}(\upsilon )%
\mathbf{\widetilde{E}}^{Z}[g(s,\upsilon )]q(ds,d\upsilon )  \label{corf2'}
\end{equation}%
for all $m$ (we have $\cup _{m}U_{m}=U,\Pi (U_{m})<\infty $ for all $m)$.

Let $\mathcal{K}$ be the collection of all bounded functions $h$ on $%
\widetilde{\Omega}\times U$ of the form%
\begin{equation*}
h(s,\upsilon )=\sum_{k=1}^{n}\bar{g}_{k}(s,\upsilon )1_{A_{k}},n\geq 1, 
\end{equation*}%
where $A_{k}\in \mathcal{F}^{\prime },$ $\bar{g}_{k}$ are bounded
real-valued $\mathcal{P}(\mathbb{F})\otimes \mathcal{U}$-measurable
functions on $\Omega \times U$. Since $\mathcal{F}^{\prime }\subseteq 
\mathcal{\widetilde{F}}_{0}^{q}$, we have $\mathbf{\widetilde{P}}$-a.s. for
all $t$ and $m$ 
\begin{equation*}
\int_{0}^{t}\int_U 1_{U_{m}}(\upsilon )h(s,\upsilon )q(ds,d\upsilon
)=\sum_{k=1}^{n}\int_U1_{A_{k}}\int_{0}^{t}1_{U_{m}}(\upsilon )\bar{g}%
_{k}(s,\upsilon )q(ds,d\upsilon ) 
\end{equation*}%
and 
\begin{eqnarray*}
\mathbf{\widetilde{E}}^{Z}\int_{0}^{t}\int_U 1_{U_{m}}(\upsilon
)h(s,\upsilon )q(ds,d\upsilon ) &=&\sum_{k=1}^{n}\mathbf{P}^{\prime
}(A_{k})\int_{0}^{t}\int_U 1_{U_{m}}(\upsilon )\bar{g}_{k}(s,\upsilon
)q(ds,d\upsilon ) \\
&=&\int_{0}^{t}\int_U \sum_{k=1}^{n}1_{U_{m}}(\upsilon )\mathbf{P}^{\prime
}(A_{k})\bar{g}_{k}(s,\upsilon )q(ds,d\upsilon ) \\
&=&\int_{0}^{t}\int_U \mathbf{\widetilde{E}}^{Z}[h(s,\upsilon
)]1_{U_{m}}(\upsilon )q(ds,d\upsilon ).
\end{eqnarray*}%
Therefore (\ref{corf2'}) holds by the monotone class theorem (see Theorem
I-21 in \cite{delmey}), and (\ref{corf2}) follows as well.

Applying Lemma \ref{cl1} (see Appendix) in $(\widetilde{\Omega },\mathcal{%
\widetilde{F}},\mathbf{\widetilde{P}})$ with the filtration $\mathbb{%
\widetilde{F}=(\mathcal{\widetilde{F}}}_{t})$ for 
\begin{equation*}
\int_{0}^{t}\int_{U}e^{\lambda s}g(s,x-Z_{s},\upsilon )q(ds,dz),0\leq t\leq
T, 
\end{equation*}%
we find that there is a $\mathcal{P(\mathbb{\widetilde{F})\otimes }B}(%
\mathbf{R}^{d})$-measurable real-valued function $w(t,x)$ such that $\mathbf{%
\widetilde{P}}$-a.s. $w(t,x)$ is cadlag in $t$ and smooth in $x$. Moreover,
for each $x\in \mathbf{R}^{d},\gamma \in \mathbf{N}_{0}^{d},\mathbf{%
\widetilde{P}}$-a.s. for all $t$%
\begin{equation*}
\partial _{x}^{\gamma }w(t,x)=\int_{0}^{t}e^{\lambda s}\partial _{x}^{\gamma
}f(s,x-Z_{s})ds+\int_{0}^{t}\int_{U}e^{\lambda s}\partial _{x}^{\gamma
}g(s,x-Z_{s},\upsilon )q(ds,d\upsilon ). 
\end{equation*}%
Also, for each $\gamma \in \mathbf{N}_{0}^{d},R>0,$%
\begin{equation*}
\mathbf{\widetilde{E}}\Big[\sup_{t\leq T,|x|\leq R}|\partial _{x}^{\gamma
}w(t,x)|^{p}\Big]+\sup_{x}\mathbf{\widetilde{E}}\Big[\sup_{t\leq T}|\partial
_{x}^{\gamma }w(t,x)|^{p}\Big]<\infty . 
\end{equation*}

Let $\left( \Delta _{k}^{n}\right) _{k\geq 1}$ be a sequence of measurable
partitions of $\mathbf{R}^{d}$ such that every $\Delta _{k}^{n}$ has a
diameter \ smaller than $1/n$ and let $z_{k}^{n}\in \Delta _{k}^{n}.$ We fix 
$t,x$ and define $Z_{t}^{n}=z_{k}^{n}$ if $Z_{t}\in \Delta _{k}^{n},k\geq 1.$

Let $\widetilde{g}(s,x,\upsilon )$ be $\mathcal{P}(\mathbb{F})\otimes 
\mathcal{B}(\mathbf{R}^{d})\otimes \mathcal{U}$-measurable modification of $%
g(s,x,\upsilon )$ in Lemma \ref{cl1}. Since $\widetilde{g}(s,x+Z_{t}-Z_{s}),%
\widetilde{g}(s,x+Z_{t}^{n}-Z_{s})$ are $\mathcal{P}(\mathbb{F})\otimes 
\mathcal{F}^{\prime }\subseteq \mathcal{P}(\mathbb{\widetilde{F}}^{q})$%
-measurable, for each $t,x,\gamma \in \mathbf{N}_{0}^{d}$ $\mathbf{%
\widetilde{P}}$-a.s. 
\begin{eqnarray}
\partial _{x}^{\gamma }w(t,x+Z_{t}^{n}) &=&\sum_{k}\partial _{x}^{\gamma
}w(t,x+z_{k}^{n})1_{\Delta _{k}^{n}}(Z_{t})  \notag \\
&=&\int_{0}^{t}e^{\lambda s}\partial _{x}^{\gamma }f(s,x+Z_{t}^{n}-Z_{s})ds+
\notag \\
&&+\sum_{k}1_{\Delta _{k}^{n}}(Z_{t})\int_{0}^{t}\int_{U}e^{\lambda
s}\partial _{x}^{\gamma }g(s,x+z_{k}^{n}-Z_{s},\upsilon )q(ds,d\upsilon )
\label{crf4} \\
&=&\int_{0}^{t}e^{\lambda s}\partial _{x}^{\gamma }f(s,x+Z_{t}^{n}-Z_{s})ds 
\notag \\
&&+\int_{0}^{t}\int_{U}e^{\lambda s}\partial _{x}^{\gamma }\widetilde{g}%
(s,x+Z_{t}^{n}-Z_{s},\upsilon )q(ds,d\upsilon )  \notag
\end{eqnarray}%
and, by the Minkowsky inequality and Lemma \ref{hl0},%
\begin{eqnarray}
&&{\widetilde{\mathbf{E}}}|\partial _{x}^{\gamma }w(t,x+Z_{t}^{n})|^{p}\leq C%
\widetilde{\mathbf{E}}\bigg\{\bigg\vert\int_{0}^{t}e^{\lambda s}\partial
_{x}^{\gamma }{f}(s,x+Z_{t}^{n}-Z_{s-})ds\bigg\vert^{p}+  \label{crf5} \\
&&\qquad +\bigg\vert\int_{0}^{t}\int_{U}e^{\lambda s}\partial _{x}^{\gamma }%
\widetilde{g}(s,x+Z_{t}^{n}-Z_{s-},\upsilon )q(ds,d\upsilon )\bigg\vert^{p}%
\bigg\}  \notag \\
&&\quad \leq C\widetilde{\mathbf{E}}\bigg\{\int_{0}^{t}e^{p\lambda s}%
\big\vert\partial _{x}^{\gamma }{f}(s,x+Z_{t}^{n}-Z_{s-})\big\vert^{p}ds+ 
\notag \\
&&\qquad +\sum_{l=2,p}\bigg(\int_{0}^{t}\int_{U}e^{l\lambda s}\big\vert%
\partial _{x}^{\gamma }\widetilde{g}(s,x+Z_{t}^{n}-Z_{s-},\upsilon )\big\vert%
^{l}\Pi (d\upsilon )ds\bigg)^{p/l}\bigg\}  \notag \\
&&\quad \leq Ce^{p\lambda t}\sup_{s,y}\biggl[\mathbf{E}\big\vert\partial
_{y}^{\gamma }f(s,y)\big\vert^{p}+\sum_{l=2,p}\mathbf{E}\biggl(\int_{U}%
\big\vert\partial _{x}^{\gamma }g(s,y,\upsilon )\big\vert^{l}\Pi (d\upsilon )%
\biggr)^{p/l}\biggr],  \notag
\end{eqnarray}%
where the constant $C$ does not depend on $n$.

Therefore, passing to the limit in (\ref{crf4}) as $n\rightarrow \infty $
(the stochastic integral is regarded as an integral of $\mathcal{P}(\mathbb{%
\widetilde{F}}^{q})$ function), we obtain that for each $(t,x),\gamma \in 
\mathbf{N}_{0}^{d},\mathbf{\widetilde{P}}$-a.s. 
\begin{eqnarray}
\partial _{x}^{\gamma }w(t,x+Z_{t})&=&\int_{0}^{t}e^{\lambda s}\partial
_{x}^{\gamma }f(s,x+Z_{t}-Z_{s})ds  \notag \\
&&+\int_{0}^{t}\int_{U}e^{\lambda s}\partial _{x}^{\gamma }\widetilde{g}%
(s,x+Z_{t}-Z_{s},\upsilon )q(ds,d\upsilon).  \label{crf6}
\end{eqnarray}%
By (\ref{crf5}) and the Fatou lemma, we have for all $\gamma \in \mathbf{N}%
_{0}^{d}$ 
\begin{eqnarray}
\sup_{t,x}\mathbf{\widetilde{E}}\big\vert \partial _{x}^{\gamma }w(t,x+Z_{t})%
\big\vert ^{p}&\leq & Ce^{p\lambda T}\sup_{s,y}\bigg[\mathbf{E}\big\vert %
\partial _{y}^{\gamma }f(s,y)\big\vert ^{p}  \notag \\
&&+\sum_{l=2,p}\mathbf{E}\bigg(\int_{U}\big\vert \partial _{x}^{\gamma
}g(s,y,\upsilon )\big\vert ^{l}\Pi (d\upsilon )\bigg)^{p/l}\bigg].
\label{crf7}
\end{eqnarray}%
Therefore, for each for each $R>0,\gamma \in \mathbf{N}_{0}^{d},$ 
\begin{equation*}
\sup_{t}\mathbf{\widetilde{E}}\int_{|x|<R}\big\vert \partial _{x}^{\gamma
}w(t,x+Z_{t})\big\vert ^{p}dx<\infty , 
\end{equation*}%
and, by the Sobolev embedding theorem$,$%
\begin{equation*}
\sup_{t}\mathbf{\widetilde{E}}\Big[ \sup_{|x|\leq R}\big\vert \partial
_{x}^{\gamma }w(t,x+Z_{t})\big\vert ^{p}\Big]<\infty . 
\end{equation*}

Therefore for each $t$, $\mathbf{P}$-a.s. the function $\widetilde{u}(t,x)=%
\mathbf{\widetilde{E}}^{Z}w(t,x+Z_{t})$ is smooth in $x$ and according to (%
\ref{corf2}), for each $(t,x),\gamma \in \mathbf{N}_{0}^{d},$ $\mathbf{P}$%
-a.s. 
\begin{eqnarray}
\partial _{x}^{\gamma }\mathbf{\widetilde{E}}^{Z}w(t,x+Z_{t}) &=&\mathbf{%
\widetilde{E}}^{Z}\partial _{x}^{\gamma }w(t,x+Z_{t})  \notag \\
&=&\int_{0}^{t}e^{\lambda s}\mathbf{\widetilde{E}}^{Z}\partial _{x}^{\gamma
}f(s,x+Z_{t}-Z_{s-})ds  \label{crf9} \\
&&+\int_{0}^{t}\int_{U}e^{\lambda s}\mathbf{\widetilde{E}}^{Z}[\partial
_{x}^{\gamma }g(s,x+Z_{t}-Z_{s-},\upsilon )]q(ds,d\upsilon ).  \notag
\end{eqnarray}%
In addition, by Lemma \ref{cl1} and (\ref{crf9}), for each $R>0,\gamma \in 
\mathbf{N}_{0}^{d},$ 
\begin{equation}
\sup_{t,x}\mathbf{E}|\widetilde{u}(t,x)|^{p}+\sup_{t}\mathbf{E}%
\sup_{|x|<R}|\partial _{x}^{\gamma }\widetilde{u}(t,x)|^{p}<\infty .
\label{crf10}
\end{equation}

On the other hand, by the It\^{o}-Wentzell formula (see e.g. \cite{mik3}),
for each $x$ $\mathbf{\widetilde{P}}$-a.s. for all $\gamma \in \mathbf{N}%
_{0}^{d},t\in \lbrack 0,T],$%
\begin{eqnarray*}
e^{-\lambda t}\partial _{x}^{\gamma }w(t,x &+&Z_{t})= \\
&=&\int_{0}^{t}\partial _{x}^{\gamma }f(s,x)ds+\int_{0}^{t}\int_{U}\partial
_{x}^{\gamma }g(s,x,\upsilon )q(ds,d\upsilon ) \\
&&+\int_{0}^{t}e^{-\lambda s}\big(\nabla \partial _{x}^{\gamma
}w(s,x+Z_{s}),b(s)\big)1_{\alpha =1}ds \\
&&+\int_{0}^{t}\int e^{-\lambda s}[\partial _{x}^{\gamma
}w(s,x+Z_{s-}+y)-\partial _{x}^{\gamma }w(s,x+Z_{s-}) \\
&&-1_{\alpha \geq 1}\big(\nabla \partial _{x}^{\gamma }w(s,x+Z_{s-}),y\big)%
1_{|y|\leq 1}]p^{Z}(ds,dy) \\
&&+\int_{0}^{t}\int_{|y|\leq 1}e^{-\lambda s}1_{\alpha \geq 1}\big(\nabla
\partial _{x}^{\gamma }w(s,x+Z_{s-}),y\big)q^{Z}(ds,dy) \\
&&-\int_{0}^{t}\lambda e^{-\lambda s}\partial _{x}^{\gamma }w(s,x+Z_{s})ds.
\end{eqnarray*}%
By (\ref{corf2}) and (\ref{crf10}), for each $(t,x)$ we have $\mathbf{P}$%
-a.s. for all $\gamma \in \mathbf{N}_{0}^{d},$%
\begin{eqnarray*}
\partial _{x}^{\gamma }u(t,x) &=&e^{-\lambda t}\partial _{x}^{\gamma }%
\widetilde{u}(t,x)=\int_{0}^{t}\partial _{x}^{\gamma
}f(s,x)ds+\int_{0}^{t}\int_{U}\partial _{x}^{\gamma }g(s,x,\upsilon
)q(ds,d\upsilon ) \\
&&+\int_{0}^{t}\bigg(\nabla \partial _{x}^{\gamma }u(s,x),\bigg[%
b(s)1_{\alpha =1}-\int_{|y|>1}1_{\alpha >1}ym^{(\alpha )}(s,y)\frac{dy}{%
|y|^{d+\alpha }}\bigg]\bigg)ds \\
&&-\int_{0}^{t}\lambda \partial _{x}^{\gamma }u(s,x)ds+\int_{0}^{t}\int
[\partial _{x}^{\gamma }u(s,x+y)-\partial _{x}^{\gamma }u(s,x) \\
&&-1_{\alpha \geq 1}\big(\nabla \partial _{x}^{\gamma }u(s,x),y\big)%
1_{|y|\leq 1}]m^{(\alpha )}(s,y)\frac{dyds}{|y|^{d+\alpha }}.
\end{eqnarray*}%
According to Lemma \ref{cl1} in Appendix, the right-hand side of this
equation has $\mathbf{P}$-a.s. cadlag in $t$ and smooth in $x$ modification.

Finally, note that (\ref{crf9}) implies (\ref{rez}).

\emph{Uniqueness. }Assume $f=0,g=0$ and $u\in {C}^{\infty }(H)$ is a
deterministic solution of (\ref{eq1}). We fix $(t_{0},x)$ and show that $%
u(t_{0},x)=0.$ Let $Y_{t},t\in \lbrack 0,t_{0}]$ be a stable process defined
on some probability space by the same formulas (\ref{maf1}) as the process $%
Z $ with $p^{Z}$ and $q^{Z}$ replaced by $p^{Y}$ and $q^{Y}$, where 
\begin{equation*}
q^{Y}(ds,dy)=p^{Y}(ds,dy)-m^{(\alpha )}(t_{0}-s,y)\frac{dy}{|y|^{d+\alpha }}%
ds 
\end{equation*}%
is a martingale measure, $p^{Y}(ds,dy)$ is a Poisson point process on $%
[0,t_{0}]\times \mathbf{R}_{0}^{d}.$ By It\^{o} formula, 
\begin{eqnarray*}
-u(t_{0},x) &=&e^{-\lambda t_{0}}u(0,x+Y_{t_{0}})-u(t_{0},x)= \\
&=&\int_{0}^{t_{0}}e^{-\lambda t}\Bigl(\frac{\partial u}{\partial t}%
+A^{(\alpha )}u-\lambda u\Bigr)(t_{0}-t,x+Y_{t})dt=0.
\end{eqnarray*}%
The lemma is proved.
\end{proof}

The uniqueness for (\ref{eq1}) holds in ${C}^{\alpha ,\beta}_p(H)$ as well.

\begin{corollary}
\label{lu}There is at most one solution $u\in {C}^{\alpha ,\beta}_p(H)$ of (%
\ref{eq1}).
\end{corollary}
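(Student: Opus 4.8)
The plan is to reduce Corollary~\ref{lu} to the smooth case already handled in Lemma~\ref{l0} by mollifying in the space variable. Suppose $u_{1},u_{2}\in C_{p}^{\alpha ,\beta }(H)$ both solve (\ref{eq1}) for the same data $f,g$. Then $u:=u_{1}-u_{2}\in C_{p}^{\alpha ,\beta }(H)$, and for each $(t,x)$, $\mathbf{P}$-a.s.,
\[
u(t,x)=\int_{0}^{t}\bigl(A^{(\alpha )}u(s,x)-\lambda u(s,x)\bigr)\,ds ,
\]
so it suffices to prove $u\equiv 0$.

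First I would fix a mollifier $\varphi \in C_{0}^{\infty }(\mathbf{R}^{d})$ with $\varphi \geq 0$, $\int \varphi =1$, $\mathrm{supp}\,\varphi \subset \{|x|\leq 1\}$, put $\varphi _{\varepsilon }(x)=\varepsilon ^{-d}\varphi (x/\varepsilon )$ and $u^{\varepsilon }(t,x)=\int u(t,x-z)\varphi _{\varepsilon }(z)\,dz$. Since $A^{(\alpha )}$ is a spatial Fourier multiplier it is translation invariant and commutes with convolution in $x$; combining this with Fubini's theorem (legitimate because $u$ and $A^{(\alpha )}u$ lie in $B_{p}(H)$) gives that $u^{\varepsilon }$ satisfies the same integral equation with $u$ replaced by $u^{\varepsilon }$. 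Moreover $\partial _{x}^{\gamma }u^{\varepsilon }=u\ast \partial ^{\gamma }\varphi _{\varepsilon }$, so $\mathbf{P}$-a.s. $u^{\varepsilon }(t,\cdot )$ is infinitely differentiable and $\sup_{(t,x)}|\partial _{x}^{\gamma }u^{\varepsilon }(t,x)|_{p}\leq \|u\|_{p}\,\|\partial ^{\gamma }\varphi _{\varepsilon }\|_{L^{1}}<\infty $ for every $\gamma $; hence $u^{\varepsilon }\in C_{p}^{\infty }(H)$ is a solution of (\ref{eq1}) with zero free terms. By the uniqueness assertion of Lemma~\ref{l0} (equivalently, by the representation (\ref{rez}) with $f=g=0$), $u^{\varepsilon }\equiv 0$ for every $\varepsilon >0$.

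Finally I would let $\varepsilon \to 0$: for each $(t,x)$,
\[
|u^{\varepsilon }(t,x)-u(t,x)|_{p}\leq \int |u(t,x-z)-u(t,x)|_{p}\varphi _{\varepsilon }(z)\,dz\leq \sup_{|z|\leq \varepsilon }|u(t,x-z)-u(t,x)|_{p},
\]
and the right-hand side tends to $0$ because $u\in C_{p}^{\alpha ,\beta }(H)$ forces $x\mapsto u(t,x)$ to be continuous as an $L_{p}(\Omega )$-valued map (the high spatial frequencies of $u$ are controlled by the bounded function $\partial ^{\alpha }u$ and the low frequencies are band-limited, so both pieces are $L_{p}$-continuous in $x$). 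Since $u^{\varepsilon }(t,x)=0$ $\mathbf{P}$-a.s., we get $u(t,x)=0$ $\mathbf{P}$-a.s. for every $(t,x)$, i.e. $u_{1}=u_{2}$. I do not expect a genuine obstacle: the only points needing care are the commutation of $A^{(\alpha )}$ with the mollification together with the Fubini interchange with the time integral, and the $L_{p}$-continuity in $x$ used in the final limit, both of which are routine once the mapping properties of $A^{(\alpha )}$ on $C_{p}^{\alpha ,\beta }(H)$ (implicit in the notion of solution) are in hand.
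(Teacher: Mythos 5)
Your proof is correct and takes essentially the same route as the paper: mollify the difference of two solutions in $x$, use translation invariance of $A^{(\alpha )}$ to see that the mollification is a smooth solution of the homogeneous equation, invoke the uniqueness part of Lemma~\ref{l0} to get $u^{\varepsilon }\equiv 0$, and let $\varepsilon \rightarrow 0$. You merely spell out the details (commutation with the convolution, $L_{p}(\Omega )$-continuity in $x$ for the final limit) that the paper's one-line argument leaves implicit.
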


\begin{proof}
Let $u\in C^{\alpha ,\beta }(H)$ be a deterministic solution of (\ref{eq1}\.{%
)}, $\zeta \in C_{0}^{\infty }(\mathbf{R}^{d}),\varepsilon >0,\zeta
_{\varepsilon }(x)=\varepsilon ^{-d}\zeta (x/\varepsilon )$ and 
\begin{equation*}
u_{\varepsilon }(t,x)=\int u(t,y)\zeta _{\varepsilon }(x-y)dy. 
\end{equation*}

Taking the convolution of the both sides of (\ref{eq1}) with $%
\zeta_{\varepsilon }$, we find that $u_{\varepsilon }\in C^{\infty }(\mathbf{%
R}^{d})$ solves (\ref{eq1})$.$ Therefore $u_{\varepsilon }(t,x)=0$ for all $%
\varepsilon >0$ and $u(t,x)=0.$ So, the statement follows.
\end{proof}

Let $Z_{t},\ t\in \lbrack 0,T]$, be the random process defined by (\ref{maf1}%
). Notice that the function $K_{s,t}^{0}(\xi )$ is the characteristic
function of the increment $Z_{t}-Z_{s}$, $0\leqslant s\leqslant t\leqslant T$%
. According to assumption~A, $\int |K_{s,t}^{0}(\xi )|d\xi <\infty .$
Therefore the function 
\begin{equation*}
G_{s,t}(x)=\mathcal{F}^{-1}\bigl\{K_{s,t}^{0}(\xi )\bigr\}(x),\quad
0\leqslant s\leqslant t\leqslant T, 
\end{equation*}%
is the probability density of the increment $Z_{t}-Z_{s}$. Hence, 
\begin{equation}
G_{s,t}(x)\geqslant 0,\quad \int G_{s,t}(x)dx=1.  \label{22}
\end{equation}%
So, if assumption~A is satisfied, then for $f\in C_{p}^{\infty }(H)$ and $%
g\in C_{r,p}^{\infty }(H\times U),\ r=2,p,$ 
\begin{eqnarray}
R_{\lambda }f(t,x) &=&\int_{0}^{t}\bigl[G_{s,t}^{\lambda }(\cdot )\ast
f(s,\cdot )\bigr](x)ds,  \label{23} \\
\widetilde{R}_{\lambda }g(t,x) &=&\int_{0}^{t}\int \bigl[G_{s,t}^{\lambda
}(\cdot )\ast g(s,\cdot ,\upsilon )\bigr](x)q(ds,d\upsilon ),  \label{24}
\end{eqnarray}%
where 
\begin{equation*}
G_{s,t}^{\lambda }(x)=e^{-\lambda (t-s)}G_{s,t}(x) 
\end{equation*}%
and $\ast $ denotes the convolution.

It is easy to derive that $R_{\lambda }f,\ \widetilde{R}_{\lambda }g\in
B_{p}(H)$ if $f\in B_{p}(H)$ and $g\in B_{r,p}(H\times U)$, $r=2,p,\
p\geqslant 2.$ Indeed, by Lemma \ref{hl1} and (\ref{22}), (\ref{24}), 
\begin{eqnarray}
|\widetilde{R}_{\lambda }g(t,x)|_{p} &\leqslant
&C\sum_{r=2,p}\sup_{s\leqslant t,y}|g(s,y,\cdot )|_{r,p}\biggl\{\int_{0}^{t}%
\biggl(\int G_{s,t}^{\lambda }(y)dy\biggr)^{r}ds\biggr\}^{1/r}  \notag \\
&\leqslant &C\sum_{r=2,p}\Bigl(t\wedge \frac{1}{\lambda }\Bigr)^{1/r}\Vert
g\Vert _{r,p}.  \label{25}
\end{eqnarray}%
By the Minkowsky inequality and (\ref{22}), (\ref{23}), 
\begin{eqnarray}
|R_{\lambda }f(t,x)|_{p} &\leqslant &\int_{0}^{t}\int G_{s,t}^{\lambda
}(x-y)|f(s,y)|_{p}dy\,ds\leqslant  \notag \\
&\leqslant &\sup_{s\leqslant t,y}|f(s,y)|_{p}\int_{0}^{t}e^{-\lambda
(t-s)}ds\leqslant  \notag \\
&\leqslant &\Bigl(t\wedge \frac{1}{\lambda }\Bigr)\Vert f\Vert _{p}.
\label{26}
\end{eqnarray}

\subsection{Characterization of stochastic H\"{o}lder spaces}

For a characterization of our function spaces we will use the following
construction (see [1]). Let $\mathcal{S}(\mathbf{R}^{d})$ be the Schwartz
space of all real-valued rapidly decreasing infinitely differentiable
functions on $\mathbf{R}^{d}$. By Lemma 6.1.7 in [1], there exist a function 
$\phi \in C_{0}^{\infty }(\mathbf{R}^{d})$ such that $\mathrm{supp}\,\phi
=\{\xi :\frac{1}{2}\leqslant |\xi |\leqslant 2\}$, $\phi (\xi )>0$ if $%
2^{-1}<|\xi |<2$ and 
\begin{equation}
\sum_{j=-\infty }^{\infty }\phi (2^{-j}\xi )=1\quad \text{if }\xi \neq 0.
\label{27}
\end{equation}%
Define the functions $\varphi _{k}\in \mathcal{S}(\mathbf{R}^{d}),$ $k=0,\pm
1,\ldots ,$ by 
\begin{equation}
\mathcal{F}\varphi _{k}(\xi )=\phi (2^{-k}\xi ),  \label{28}
\end{equation}%
and $\psi \in \mathcal{S}(\mathbf{R}^{d})$ by 
\begin{equation}
\mathcal{F}\psi (\xi )=1-\sum_{k\geqslant 1}\mathcal{F}\varphi _{k}(\xi ).
\label{29}
\end{equation}

The following results are proved in \cite{mip2}. We simply take $%
V=L_{p}(\Omega ,\mathcal{F},\mathbf{P})$ or $V=L_{p}(\Omega ,\mathcal{F},%
\mathbf{P};L_{r}(U,\mathcal{U},\Pi ))$, $r>1$, $p>1$, in Lemma 12 and
Corollary 13 of \cite{mip2}.

\begin{lemma}
\label{lem4} Let $\alpha \in[0,2)$, $\beta \in(0,1)$, $r>1$ and $p>1.$ Then
the following statements hold:

$\mathrm{(i)}$ the norm $\Vert u\Vert_{\alpha ,\beta ;p} $ is equivalent to
the norm 
\begin{equation*}
\Vert \psi\ast u\Vert_p +\sup_{j\geq1} 2^{(\alpha +\beta )j} \Vert \varphi_j
\ast u\Vert _p; 
\end{equation*}

$\mathrm{(ii)}$ the norm $\Vert g\Vert_{\alpha ,\beta ;r,p} $ is equivalent
to the norm 
\begin{equation*}
\Vert \psi\ast g\Vert_{r,p} +\sup_{j\geq1} 2^{(\alpha +\beta )j} \Vert
\varphi_j \ast g\Vert _{r,p} . 
\end{equation*}
\end{lemma}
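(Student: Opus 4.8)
The plan is to reduce the statement to a result about Banach-space-valued functions already established in the literature by specializing an abstract characterization of Besov-type spaces. Specifically, Lemma~\ref{lem4} asserts the norm equivalence for the deterministic-looking spaces $C^{\alpha,\beta}$ of functions valued in a Banach space $V$, as proved in Lemma~12 and Corollary~13 of \cite{mip2}. The key observation is that the spaces $C_p^{\alpha,\beta}(H)$ and $C_{r,p}^{\alpha,\beta}(H\times U)$ introduced in Section~2 are \emph{precisely} instances of such $V$-valued spaces: taking $V=L_p(\Omega,\mathcal F,\mathbf P)$ gives $C_p^{\alpha,\beta}(H)$ (since $|u(t,x)|_p$, $[\partial^\alpha u]_{\beta;p}$ etc.\ are exactly the $V$-norm and the $V$-valued H\"older seminorm of the map $(t,x)\mapsto u(t,x)$), and taking $V=L_p(\Omega,\mathcal F,\mathbf P;L_r(U,\mathcal U,\Pi))$ gives $C_{r,p}^{\alpha,\beta}(H\times U)$ (since $|g(t,x,\cdot)|_{r,p}=\big||g(t,x,\cdot)|_{L_r(U)}\big|_p$ is exactly the $V$-norm of $(t,x)\mapsto g(t,x,\cdot)$).

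First I would record the identification above carefully, checking that $\mathcal R(\mathbb F)\otimes\mathcal B(\mathbf R^d)$-measurability of $u$ (resp.\ the corresponding measurability of $g$) makes $(t,x)\mapsto u(t,x)\in V$ a well-defined bounded $V$-valued function on $H$, so that $u\in C_p^{\alpha,\beta}(H)$ if and only if the associated $V$-valued function lies in the abstract space $C^{\alpha,\beta}(H;V)$ of \cite{mip2}, with equal norms. The supremum over $(t,x)\in H$ in the definitions of $\Vert u\Vert_p$ and $[u]_{\beta;p}$ matches the sup-norm and H\"older seminorm in the $V$-valued setting verbatim, and $\partial^\alpha$ acting in the $L_p(\Omega)$-sense is the same as $\partial^\alpha$ acting on the $V$-valued function (both being defined through the Fourier transform in $x$, which commutes with the $V$-valued structure). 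Next I would verify the two hypotheses needed to invoke \cite{mip2}: that $L_p(\Omega,\mathcal F,\mathbf P)$ and $L_p(\Omega,\mathcal F,\mathbf P;L_r(U,\mathcal U,\Pi))$ are Banach spaces of the type required there (reflexive, or UMD, or whatever regularity hypothesis Lemma~12 of \cite{mip2} imposes on $V$) — for $p,r\in(1,\infty)$ these $L_p$-spaces are UMD, so the hypothesis holds; this is exactly the reason the statement restricts to $r>1$, $p>1$.

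Then the conclusion is immediate: applying Lemma~12 and Corollary~13 of \cite{mip2} with $V=L_p(\Omega,\mathcal F,\mathbf P)$ yields part~$\mathrm{(i)}$, since the Littlewood--Paley pieces $\varphi_j\ast u$ and $\psi\ast u$ are computed by convolution in $x$, which again commutes with the $V$-valued structure, so $\Vert\varphi_j\ast u\Vert_p$ is the $V$-norm of the $j$-th Littlewood--Paley block of the $V$-valued function; likewise $V=L_p(\Omega,\mathcal F,\mathbf P;L_r(U,\mathcal U,\Pi))$ yields part~$\mathrm{(ii)}$. The main obstacle here is not any hard analysis but rather the bookkeeping of checking that every operation appearing in the two norms — the Fourier multiplier $\partial^\alpha$, the convolutions with $\psi$ and $\varphi_j$, the spatial H\"older difference quotient, and the supremum over $H$ — is genuinely a ``pointwise in $\omega$ then take $L_p$'' operation that coincides with the corresponding operation on the $V$-valued representative, together with confirming the precise form of the hypothesis on $V$ in \cite{mip2} so that the cited lemmas apply. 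Once that dictionary is in place the proof is a one-line invocation; I would therefore present it as: ``Apply Lemma~12 and Corollary~13 of \cite{mip2} with $V=L_p(\Omega,\mathcal F,\mathbf P)$ and $V=L_p(\Omega,\mathcal F,\mathbf P;L_r(U,\mathcal U,\Pi))$ respectively, after identifying $C_p^{\alpha,\beta}(H)$ and $C_{r,p}^{\alpha,\beta}(H\times U)$ with the corresponding $V$-valued spaces.''
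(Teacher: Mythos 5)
Your proposal is correct and coincides with the paper's own treatment: the paper proves Lemma \ref{lem4} precisely by taking $V=L_{p}(\Omega ,\mathcal{F},\mathbf{P})$ and $V=L_{p}(\Omega ,\mathcal{F},\mathbf{P};L_{r}(U,\mathcal{U},\Pi ))$ in Lemma 12 and Corollary 13 of \cite{mip2}. Your additional bookkeeping about the identification of the stochastic H\"older spaces with the $V$-valued ones is exactly the (implicit) content of that reduction.
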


\begin{lemma}
\label{lem5} Let $\alpha \in (0,2)$, $\beta \in (0,1)$, $r>1$, $p>1$, $u\in
C_{p}^{\alpha ,\beta }(H)$, $g\in C_{r,p}^{\alpha ,\beta }(H\times U)$, and 
\begin{eqnarray*}
u_{n} &=&{}\psi \ast u+\sum_{j=1}^{n}\varphi _{j}\ast u, \\
g_{n} &=&{}\psi \ast u+\sum_{j=1}^{n}\varphi _{j}\ast g,\quad n\geq 1.
\end{eqnarray*}

Then $u_n\in C^{\infty}_p(H)$, $g_n\in C^{\infty}_{r,p}(H\times U),$ 
\begin{eqnarray*}
\Vert u_n\Vert_{\alpha ,\beta ;p} &\leq& 2\Vert u\Vert_{\alpha ,\beta
;p},\quad \Vert g_n\Vert_{\alpha ,\beta ;r,p} \leq 2\Vert g\Vert_{\alpha
,\beta ;r,p}, \\
\Vert u\Vert_{\alpha ,\beta ;p} &\leq & \mathop{\mathrm{lim\,inf}}%
_{n\to\infty} \Vert u_n\Vert_{\alpha ,\beta ;p},\quad \Vert g\Vert_{\alpha
,\beta ;r,p} \leq \mathop{\mathrm{lim\,inf}}_{n\to\infty} \Vert
g_n\Vert_{\alpha ,\beta ;r,p}
\end{eqnarray*}
and for each $\beta ^{\prime }\in(0,\beta )$ 
\begin{equation*}
\Vert u_n-u\Vert_{\alpha ,\beta ^{\prime };p} \to 0,\quad \Vert
g_n-g\Vert_{\alpha ,\beta ^{\prime };r,p} \to 0,\ n\to\infty . 
\end{equation*}
\end{lemma}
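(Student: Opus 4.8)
The plan is to read the statement as the standard fact that the Littlewood--Paley partial sums $u_n=\psi\ast u+\sum_{j=1}^{n}\varphi_j\ast u$ approximate $u$, the only non-elementary input being the characterization of the norms in Lemma \ref{lem4}. Two structural remarks will be used repeatedly. First, since $\mathcal F\psi+\sum_{j\geq 1}\mathcal F\varphi_j\equiv 1$, the identity $u=\psi\ast u+\sum_{j\geq 1}\varphi_j\ast u$ holds (with convergence read off from the norm via Lemma \ref{lem4}), so $u-u_n=\sum_{j>n}\varphi_j\ast u$. Second, $\mathrm{supp}\,\mathcal F\varphi_j\subset\{2^{j-1}\leq|\xi|\leq 2^{j+1}\}$ and $\mathrm{supp}\,\mathcal F\psi\subset\{|\xi|\leq 2\}$, so $\varphi_k\ast\varphi_j=0$ for $|k-j|\geq 2$, $\psi\ast\varphi_j=0$ for $j\geq 2$, and $\varphi_k\ast\psi=0$ for $k\geq 2$; in particular $\psi\ast u_n=\psi\ast u$, $\varphi_k\ast u_n=\varphi_k\ast u$ for $k\leq n-1$, $\varphi_k\ast u_n=0$ for $k\geq n+2$, and for the two boundary indices $k\in\{n,n+1\}$ one has $\varphi_k\ast u_n=\varphi_k\ast u-\sum_{j>n,\,|j-k|\leq 1}\varphi_k\ast\varphi_j\ast u$. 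Finally $\partial^{\alpha}$ commutes with all these convolutions, being itself a Fourier multiplier, and $\|\varphi_k\|_{L_1(\mathbf R^d)}=\|\varphi_0\|_{L_1(\mathbf R^d)}$ is independent of $k$ by scaling.

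First I would verify $u_n\in C_p^{\infty}(H)$ and $g_n\in C_{r,p}^{\infty}(H\times U)$. Each $\varphi_j\ast u$ is a convolution in $x$ only with a Schwartz function, hence is $\mathcal R(\mathbb F)\otimes\mathcal B(\mathbf R^d)$-measurable, $\mathbf P$-a.s.\ infinitely differentiable in $x$ with $\partial_x^{\gamma}(\varphi_j\ast u)=(\partial^{\gamma}\varphi_j)\ast u$ (differentiation under the integral sign being justified by the rapid decay of $\partial^{\gamma}\varphi_j$ and $\sup_{t,x}|u(t,x)|_p<\infty$), and the generalized Minkowski inequality gives $\sup_{t,x}|\partial_x^{\gamma}(\varphi_j\ast u)(t,x)|_p\leq\|\partial^{\gamma}\varphi_j\|_{L_1(\mathbf R^d)}\,\|u\|_p<\infty$; the same holds for $\psi\ast u$, and a finite sum of such terms lies in $C_p^{\infty}(H)$. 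Replacing $|\cdot|_p$ by $|\cdot|_{r,p}$ throughout (the convolutions not touching the variable $v$) gives $g_n\in C_{r,p}^{\infty}(H\times U)$.

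The substantive step is the uniform bound. By Lemma \ref{lem4} it suffices to control $\|\psi\ast u_n\|_p+\sup_{k\geq 1}2^{(\alpha+\beta)k}\|\varphi_k\ast u_n\|_p$. Using the block identities above, $\psi\ast u_n=\psi\ast u$ and $\varphi_k\ast u_n=\varphi_k\ast u$ for $k\leq n-1$, while $\varphi_k\ast u_n$ vanishes for $k\geq n+2$; for $k\in\{n,n+1\}$ one estimates the boundary corrections by $\|\varphi_k\ast\varphi_j\ast u\|_p\leq\|\varphi_0\|_{L_1}\|\varphi_j\ast u\|_p$, so that $2^{(\alpha+\beta)k}\|\varphi_k\ast u_n\|_p$ is bounded by a fixed multiple of $\sup_{k'}2^{(\alpha+\beta)k'}\|\varphi_{k'}\ast u\|_p$. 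Collecting the pieces and invoking Lemma \ref{lem4} in the other direction yields $\|u_n\|_{\alpha,\beta;p}\leq 2\|u\|_{\alpha,\beta;p}$, and the argument for $g_n$ is identical with $|\cdot|_{r,p}$ in place of $|\cdot|_p$. I expect this bookkeeping of the two boundary blocks near $k=n$ to be the only point requiring care.

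It remains to prove the lower semicontinuity and the convergence in the weaker norm. From $u-u_n=\sum_{j>n}\varphi_j\ast u$ and $\partial^{\alpha}u-\partial^{\alpha}u_n=\sum_{j>n}\varphi_j\ast(\partial^{\alpha}u)$, together with $\|\varphi_j\ast u\|_p\leq C2^{-(\alpha+\beta)j}\|u\|_{\alpha,\beta;p}$ and $\|\varphi_j\ast(\partial^{\alpha}u)\|_p\leq C2^{-\beta j}\|u\|_{\alpha,\beta;p}$ (the latter from Lemma \ref{lem4} applied to $\partial^{\alpha}u\in C_p^{0,\beta}(H)$), one gets $\sup_{t,x}|u_n(t,x)-u(t,x)|_p\to 0$ and $\sup_{t,x}|\partial^{\alpha}u_n(t,x)-\partial^{\alpha}u(t,x)|_p\to 0$. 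Hence each of $\|\cdot\|_p$, $\|\partial^{\alpha}\cdot\|_p$ and $[\partial^{\alpha}\cdot]_{\beta;p}$ is lower semicontinuous along $(u_n)$ (the seminorm because the difference quotient at each fixed pair of points converges), and adding the three $\liminf$ inequalities, using superadditivity of $\liminf$, gives $\|u\|_{\alpha,\beta;p}\leq\liminf_n\|u_n\|_{\alpha,\beta;p}$. For the last assertion, for $\beta'\in(0,\beta)$ one uses instead the interpolation (Bernstein-type) bound $[\varphi_j\ast(\partial^{\alpha}u)]_{\beta';p}\leq C2^{-(\beta-\beta')j}\|u\|_{\alpha,\beta;p}$ together with $\|\varphi_j\ast(\partial^{\alpha}u)\|_p\leq C2^{-\beta j}\|u\|_{\alpha,\beta;p}$; summing over $j>n$ these are $O(2^{-(\beta-\beta')n})$, so (again by Lemma \ref{lem4}) $\|u_n-u\|_{\alpha,\beta';p}\to 0$. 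All of the $g_n$-statements follow by the same computations with $|\cdot|_{r,p}$ replacing $|\cdot|_p$.
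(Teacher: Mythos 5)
Your argument is correct in substance, but it is worth noting that the paper does not prove this lemma at all: it is imported wholesale from Lemma~12 and Corollary~13 of the reference \cite{mip2} (with $V=L_{p}(\Omega,\mathcal F,\mathbf P)$ or $V=L_{p}(\Omega,\mathcal F,\mathbf P;L_{r}(U,\mathcal U,\Pi))$), so your self-contained Littlewood--Paley computation is a genuinely different route from the paper's citation. The route itself is sound: the block-support observations ($\varphi_k\ast\varphi_j=0$ for $|k-j|\geq 2$, $\psi\ast\varphi_j=0$ for $j\geq 2$) correctly reduce everything to the two boundary indices $k\in\{n,n+1\}$; the membership $u_n\in C_p^\infty(H)$ via $\partial_x^\gamma(\varphi_j\ast u)=(\partial^\gamma\varphi_j)\ast u$ and generalized Minkowski is fine; the lower semicontinuity via uniform convergence of $u_n$, $\partial^\alpha u_n$ in $\Vert\cdot\Vert_p$ plus pointwise l.s.c.\ of the H\"older seminorm is fine; and the $O(2^{-(\beta-\beta')n})$ decay of the tail in the $\Vert\cdot\Vert_{\alpha,\beta';p}$ norm (whether obtained by your interpolation bound on $[\varphi_j\ast\partial^\alpha u]_{\beta';p}$ or directly from Lemma~\ref{lem4} applied to $u-u_n=\sum_{j>n}\varphi_j\ast u$) gives the last assertion. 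The $g$-statements indeed follow verbatim with $|\cdot|_{r,p}$ in place of $|\cdot|_p$, the convolutions acting only in $x$ (note the $\psi\ast u$ in the displayed definition of $g_n$ is a typo for $\psi\ast g$, which you correctly read through).

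The one point where your write-up overclaims is the factor $2$: "collecting the pieces and invoking Lemma~\ref{lem4} in the other direction" produces $\Vert u_n\Vert_{\alpha,\beta;p}\leq C\Vert u\Vert_{\alpha,\beta;p}$ with $C$ depending on the two equivalence constants in Lemma~\ref{lem4} and on $\Vert\varphi_0\Vert_{L_1(\mathbf R^d)}$, not the literal constant $2$ of the statement (that constant comes from the precise normalization used in \cite{mip2}). This is cosmetic rather than a gap in the mathematics --- in the proof of Theorem~\ref{main} only the uniform boundedness and the convergence $\Vert u_n-u\Vert_{\alpha,\beta';p}\to 0$ are used --- but as written your proof establishes the lemma with "$2$" replaced by an unspecified uniform constant, and you should either say so or track the constants through Lemma~\ref{lem4}.
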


\section{Proof of main result}

\subsection{Estimates of $R_{\protect\lambda }f$ and $\protect\widetilde R_{%
\protect\lambda }g$}

In order to prove Theorem \ref{main}, we need some estimates of the
functions $R_{\lambda }f$ and $\widetilde R_{\lambda }g$.

Let $\psi ,\varphi _{j},\ j\geqslant 0,$ be the functions defined by (\ref%
{28}), (\ref{29}) and 
\begin{eqnarray*}
f_{0}(t,\cdot ) &=&f(t,\cdot )\ast \psi ,\quad g_{0}(t.\cdot )=g(t,\cdot
)\ast \psi , \\
f_{j}(t,\cdot ) &=&f(t,\cdot )\ast \varphi _{j},\quad g_{j}(t.\cdot
)=g(t,\cdot )\ast \varphi _{j},\quad j\geqslant 1.
\end{eqnarray*}%
Obviously, 
\begin{eqnarray*}
{}\psi \ast R_{\lambda }f(t,\cdot ) &=&R_{\lambda }f_{0}(t,\cdot ),\quad
{}\psi \ast \widetilde{R}_{\lambda }g(t,\cdot )=\widetilde{R}_{\lambda
}g_{0}(t,\cdot ), \\
{}\varphi _{j}\ast R_{\lambda }f(t,\cdot ) &=&R_{\lambda }f_{j}(t,\cdot
),\quad \varphi _{j}\ast \widetilde{R}_{\lambda }g(t,\cdot )=\widetilde{R}%
_{\lambda }g_{j}(t,\cdot ),\quad j\geqslant 1.
\end{eqnarray*}

Let us introduce the functions 
\begin{eqnarray*}
\widetilde\varphi_0 &=& {}\psi + \varphi_0, \\
\widetilde \varphi_j &=& {}\varphi_{j-1} +\varphi_j +\varphi_{j+1},\quad
j\geqslant1 .
\end{eqnarray*}
Since 
\begin{equation*}
\psi = \psi\ast\widetilde\varphi_0,\quad \varphi_j = \varphi_j\ast\widetilde
\varphi_j, 
\end{equation*}
we have 
\begin{eqnarray*}
R_{\lambda }f_j(t,x) &=& \int_0^t \Bigl[ h^{\lambda ,j}_{s,t}(\cdot)\ast
f_j(s,\cdot)\Bigr](x)ds, \\
\widetilde R_{\lambda }g_j(t,x) &=& \int_0^t\int_U \Bigl[ h^{\lambda
,j}_{s,t}(\cdot)\ast g_j(s,\cdot,\upsilon )\Bigr](x)q(ds,d\upsilon ),
\end{eqnarray*}
where 
\begin{equation*}
h^{\lambda ,j}_{s,t}(x) = \mathcal{F}^{-1}\Bigl[ K^{\lambda }_{s,t}(\cdot)%
\mathcal{F}\widetilde\varphi_j\Bigr] (x),\quad j\geqslant0. 
\end{equation*}

According to Remark 10 \cite{mip2}, there is a finite family $\{\Gamma
_{j},j=1,\ldots ,N\}$ of open connected sets which covers the unit sphere $%
S^{d-1}$ and a family $\{\mathcal{O}_{j}(\overline{w}),\overline{w}\in
\Gamma _{j},j=1,\ldots ,N\}$ of infinitely differentiable orthogonal
transforms with the following properties: for each $w\in S^{d-1},\ \overline{%
w}=\Gamma _{j},\ j=1,\ldots ,N,$ and multiindex $\gamma $ 
\begin{equation*}
(\mathcal{O}_{j}(\overline{w})w,\overline{w})=w_{1},\quad |\partial ^{\gamma
}\mathcal{O}_{j}(\overline{w})|\leqslant C(\gamma ) 
\end{equation*}%
and for each $\xi /|\xi |\in \Gamma _{j},$ $j=1,\ldots ,N$ 
\begin{eqnarray*}
{}\psi ^{(\alpha )}(t,\xi ) &=&-|\xi |^{\alpha
}\int_{S^{d-1}}|w_{1}|^{\alpha }\Big[1-i\tan \frac{\alpha \pi }{2}%
\mathop{{\mathrm{sgn}}}w_{1}\,1_{\alpha \neq 1}- \\
&&-\frac{2}{\pi }\mathop{{\mathrm{sgn}}}w_{1}\ln |w_{1}|1_{\alpha =1}\Big]%
m^{(\alpha )}\Bigl(t,\mathcal{O}_{j}\Bigl(\frac{\xi }{|\xi |}\Bigr)w\Bigr)%
\mu _{d-1}(dw)+ \\
&&+i(b(t),\xi )1_{\alpha =1}+\frac{1}{2}\sum_{i,j=1}^{d}B^{ij}(t)\xi _{i}\xi
_{j}1_{\alpha =2}.
\end{eqnarray*}

Using these properties and assumptions~\textbf{A1, A2}, it is easy to derive
the following estimates:

(i) there is a constant $C=C(\alpha ,\mu ,C^{(\alpha )},d)$ such that for $%
0\leqslant s\leqslant t\leqslant T$ and multiindices $\gamma ,|\gamma
|\leqslant d_{0}$ 
\begin{equation}
\big\vert\partial _{\xi }^{\gamma }K_{s,t}^{\lambda }(\xi )\big\vert%
\leqslant Ce^{-\mu (t-s)(|\xi |^{\alpha }+\lambda )}\sum_{k\leqslant |\gamma
|}|\xi |^{k\alpha -|\gamma |}(t-s)^{k};  \label{30}
\end{equation}

(ii) for each $\kappa \in (0,1)$ there is a constant $C=C(\alpha ,\mu
,C^{(\alpha )},d,\kappa )$ such that for $0\leqslant s\leqslant t\leqslant
t^{\prime }\leqslant T$ and multiindices $\gamma ,|\gamma |\leqslant d_{0}$ 
\begin{eqnarray}
&&\big\vert\partial _{\xi }^{\gamma }\bigl[K_{s,t^{\prime }}^{\lambda }(\xi
)-K_{s,t}^{\lambda }(\xi )\bigr]\big\vert\leqslant  \label{31} \\
&\leqslant &Ce^{-\mu (t-s)(|\xi |^{\alpha }+\lambda )}\bigl[(t^{\prime
}-t)|\xi |^{\alpha }\bigr]^{\kappa }\sum_{k\leqslant |\gamma |}|\xi
|^{k\alpha -|\gamma |}(t-s)^{k}.  \notag
\end{eqnarray}

\begin{lemma}
\label{lem6} Let assumptions~\textbf{A1, A2} be satisfied and $0\leq s\leq
t\leq t^{\prime }\leq T$. Then:

$\mathrm{(i)}$ for all $j\geq 1$ 
\begin{eqnarray*}
\int \big\vert h_{s,t}^{\lambda ,j}(x)\big\vert dx &\leq &Ce^{-c(2^{j\alpha
}+\lambda )(t-s)}\sum_{k\leq d_{0}}\big[2^{j\alpha }(t-s)\big]^{k}, \\
\int \big\vert h_{s,t}^{\lambda ,0}(x)\big\vert dx &\leq &Ce^{-\lambda (t-s)}
\end{eqnarray*}%
where the constants $C=C(\alpha ,\mu ,d,C^{(\alpha )})$, $c=c(\alpha ,\mu
)>0.$

$\mathrm{(ii)}$ for each $\kappa \in (0,1)$ there is a constant $C=C(\kappa
,\alpha ,\mu ,d,C^{(\alpha )})$ such that for all $j\geq 1$ 
\begin{eqnarray*}
\int \big\vert h_{s,t^{\prime }}^{\lambda ,j}(x) &-&h_{s,t}^{\lambda ,j}(x)%
\big\vert dx\leq \\
&\leq &C[2^{j\alpha }(t^{\prime }-t)]^{\kappa }e^{-c(2^{j\alpha }+\lambda
)(t-s)}\sum_{k\leq d_{0}}\big[2^{j\alpha }(t-s)\big]^{k},
\end{eqnarray*}%
where the constant $c=c(\alpha ,\mu )>0$.
\end{lemma}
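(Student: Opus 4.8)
The plan is to deduce the $L^{1}$ bounds on the kernels $h_{s,t}^{\lambda ,j}$ from the pointwise symbol estimates (\ref{30}), (\ref{31}) by a weighted Plancherel argument, the exponent $d_{0}=[d/2]+1$ being precisely what makes the relevant weight square-integrable. I would first dispose of the case $j=0$ directly: since $K_{s,t}^{\lambda }(\xi )=e^{-\lambda (t-s)}K_{s,t}^{0}(\xi )=e^{-\lambda (t-s)}\mathcal{F}G_{s,t}(\xi )$, one has $h_{s,t}^{\lambda ,0}=G_{s,t}^{\lambda }\ast \widetilde{\varphi }_{0}$ with $G_{s,t}^{\lambda }=e^{-\lambda (t-s)}G_{s,t}$. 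By (\ref{22}), $\int G_{s,t}^{\lambda }=e^{-\lambda (t-s)}$ and $G_{s,t}^{\lambda }\geq 0$, while $\widetilde{\varphi }_{0}=\psi +\varphi _{0}\in \mathcal{S}(\mathbf{R}^{d})\subset L^{1}(\mathbf{R}^{d})$; Young's convolution inequality then gives $\int |h_{s,t}^{\lambda ,0}(x)|\,dx\leq e^{-\lambda (t-s)}\|\widetilde{\varphi }_{0}\|_{L^{1}}$, which is the second bound in $\mathrm{(i)}$.

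For $j\geq 1$ the point is that $\mathcal{F}\widetilde{\varphi }_{j}(\xi )=\phi (2^{-(j-1)}\xi )+\phi (2^{-j}\xi )+\phi (2^{-(j+1)}\xi )$ is supported in the annulus $A_{j}=\{2^{j-2}\leq |\xi |\leq 2^{j+2}\}$ and satisfies $|\partial _{\xi }^{\beta }\mathcal{F}\widetilde{\varphi }_{j}(\xi )|\leq C(\beta )2^{-j|\beta |}$. Using the weight $w(x)=(1+2^{2j}|x|^{2})^{d_{0}/2}$ and Cauchy--Schwarz,
\begin{equation*}
\int |h_{s,t}^{\lambda ,j}(x)|\,dx\leq \|w^{-1}\|_{L^{2}}\,\|w\,h_{s,t}^{\lambda ,j}\|_{L^{2}}\leq C\,2^{-jd/2}\Big(\|h_{s,t}^{\lambda ,j}\|_{L^{2}}+2^{jd_{0}}\sum_{|\gamma |=d_{0}}\|x^{\gamma }h_{s,t}^{\lambda ,j}\|_{L^{2}}\Big),
\end{equation*}
where $\|w^{-1}\|_{L^{2}}^{2}=2^{-jd}\int (1+|y|^{2})^{-d_{0}}dy\leq C2^{-jd}$ converges precisely because $2d_{0}>d$. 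By Plancherel, $\|x^{\gamma }h_{s,t}^{\lambda ,j}\|_{L^{2}}=c_{d}\,\|\partial _{\xi }^{\gamma }(K_{s,t}^{\lambda }\mathcal{F}\widetilde{\varphi }_{j})\|_{L^{2}(A_{j})}$ for $|\gamma |\leq d_{0}$, so the whole matter reduces to estimating these $L^{2}(A_{j})$-norms.

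I would expand $\partial _{\xi }^{\gamma }(K_{s,t}^{\lambda }\mathcal{F}\widetilde{\varphi }_{j})$ by the Leibniz rule, estimate every factor $\partial _{\xi }^{\gamma '}K_{s,t}^{\lambda }$ on $A_{j}$ by (\ref{30})---using that $|\xi |$ is comparable to $2^{j}$ there to replace $|\xi |^{k\alpha -|\gamma '|}$ by $C2^{j(k\alpha -|\gamma '|)}$ and $e^{-\mu (t-s)|\xi |^{\alpha }}$ by $e^{-c2^{j\alpha }(t-s)}$ with $c=\mu 2^{-2\alpha }$---estimate every factor $\partial _{\xi }^{\gamma -\gamma '}\mathcal{F}\widetilde{\varphi }_{j}$ by $C2^{-j(|\gamma |-|\gamma '|)}$, and bound the $L^{2}(A_{j})$-norm by $|A_{j}|^{1/2}\leq C2^{jd/2}$ times the sup. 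Collecting the powers of $2^{j}$, and bounding $e^{-c2^{j\alpha }(t-s)}e^{-\mu \lambda (t-s)}$ by $e^{-c'(2^{j\alpha }+\lambda )(t-s)}$ with $c'=\min (c,\mu )$, yields $\mathrm{(i)}$. For $\mathrm{(ii)}$ I would repeat this computation verbatim for $j\geq 1$ with $K_{s,t}^{\lambda }$ replaced by $K_{s,t'}^{\lambda }-K_{s,t}^{\lambda }$ and with (\ref{31}) in place of (\ref{30}); on $A_{j}$ the extra factor $[(t'-t)|\xi |^{\alpha }]^{\kappa }$ becomes $C[2^{j\alpha }(t'-t)]^{\kappa }$ and simply multiplies the estimate of $\mathrm{(i)}$.

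I expect the only genuinely delicate point to be the bookkeeping of the powers of $2^{j}$ in the Leibniz expansion: for each split $\gamma =\gamma '+\beta $ with $|\gamma |=d_{0}$ the product $2^{-jd/2}\cdot 2^{jd_{0}}\cdot 2^{j(k\alpha -|\gamma '|)}\cdot 2^{-j|\beta |}\cdot 2^{jd/2}$ must collapse exactly to $2^{jk\alpha }$ with $0\leq k\leq d_{0}$, so that precisely the sum $\sum_{k\leq d_{0}}[2^{j\alpha }(t-s)]^{k}$ is produced. This is where the precise value $d_{0}=[d/2]+1$ (the smallest integer strictly above $d/2$) is used: it is needed both for convergence of the weight $w^{-1}$ in $L^{2}$ and to absorb the top-order ($|\gamma |=d_{0}$) derivatives falling on $K_{s,t}^{\lambda }$ and on $\mathcal{F}\widetilde{\varphi }_{j}$.
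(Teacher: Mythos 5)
Your proposal is correct and follows essentially the same route as the paper: Leibniz expansion of $\partial _{\xi }^{\gamma }\bigl(K_{s,t}^{\lambda }\mathcal{F}\widetilde{\varphi }_{j}\bigr)$ using (\ref{30})--(\ref{31}) with $|\xi |\sim 2^{j}$ on the supporting annulus, Parseval to convert moments $x^{\gamma }h_{s,t}^{\lambda ,j}$ into these derivatives, and Cauchy--Schwarz against the weight $(1+2^{j}|x|)^{d_{0}}$, whose square-integrability is exactly the condition $2d_{0}>d$. The only (harmless) deviation is the $j=0$ case, which you derive directly from (\ref{22}) and Young's inequality instead of citing Lemma 16 of \cite{mip2}; your power-of-$2^{j}$ bookkeeping checks out.
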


\begin{proof}
By Lemma 16 \cite{mip2}, 
\begin{equation*}
\int \big\vert h_{s,t}^{\lambda ,0}(x)\big\vert dx\leq Ce^{-\lambda (t-s)}. 
\end{equation*}%
According to (\ref{30}), (\ref{31}) and the definition of $\widetilde{%
\varphi }_{j}$, for $\kappa \in (0,1)$, multiindices $\gamma ,|\gamma |\leq
d_{0}$, and $j\geq 1$ 
\begin{eqnarray*}
\big\vert\partial _{\xi }^{\gamma }\bigl[K_{s,t}^{\lambda }(\xi )\mathcal{F}%
\widetilde{\varphi }_{j}(\xi )|\bigr]\big\vert &\leq &\sum_{\gamma ^{\prime
}+\gamma ^{\prime \prime }=\gamma }\big\vert\partial _{\xi }^{\gamma
^{\prime }}K_{s,t}^{\lambda }(\xi )\big\vert\,|\partial _{\xi }^{\gamma
^{\prime \prime }}\mathcal{F}\widetilde{\varphi }_{j}(\xi )\big\vert\leq \\
&\leq &C2^{-j|\gamma |}e^{-c(2^{j\alpha }+\lambda )(t-s)}\sum_{k\leq |\gamma
|}[2^{j\alpha }(t-s)]^{k}
\end{eqnarray*}%
and 
\begin{eqnarray*}
\Big\vert\partial _{\xi }^{\gamma }\bigl[\big(K_{s,t^{\prime }}^{\lambda
}(\xi )-K_{s,t}^{\lambda }(\xi )\bigr)\mathcal{F}\widetilde{\varphi }%
_{j}(\xi )\bigr]\Big\vert &\leq &\sum_{\gamma ^{\prime }+\gamma ^{\prime
\prime }=\gamma }\big\vert\partial _{\xi }^{\gamma ^{\prime }}\bigl(%
K_{s,t^{\prime }}^{\lambda }(\xi )-K_{s,t}^{\lambda }(\xi )\bigr)\big\vert\,%
\big\vert\partial _{\xi }^{\gamma ^{\prime \prime }}\mathcal{F}\widetilde{%
\varphi }_{j}(\xi )\big\vert\leq \\
&\leq &C2^{-j|\gamma |}\bigl[2^{j\alpha }(t^{\prime }-t)\bigr]^{\kappa
}e^{-c(2^{j\alpha }+\lambda )(t-s)}\sum_{k\leq |\gamma |}[2^{j\alpha
}(t-s)]^{k}.
\end{eqnarray*}%
These estimates, together with Parseval's equality, imply 
\begin{eqnarray*}
\int \big\vert(ix)^{\gamma }h_{s,t}^{\lambda ,j}(x)2^{j|\gamma |}\big\vert%
^{2}dx &=&\int \big\vert\partial _{\xi }^{\gamma }\bigl[K_{s,t}^{\lambda
}(\xi )\mathcal{F}\widetilde{\varphi }_{j}(\xi )\bigr]2^{j|\gamma |}\big\vert%
^{2}d\xi \leq \\
&\leq &C2^{jd}e^{-c(2^{j\alpha }+\lambda )(t-s)}\sum_{k\leq |\gamma
|}[2^{j\alpha }(t-s)]^{2k}
\end{eqnarray*}%
and 
\begin{eqnarray*}
&&\int \big\vert(ix)^{\gamma }\bigl[h_{s,t^{\prime }}^{\lambda
,j}(x)-h_{s,t}^{\lambda ,j}(x)\bigr]2^{j|\gamma |}\big\vert^{2}dx= \\
&&\qquad =\int \big\vert\partial _{\xi }^{\gamma }\bigl[K_{s,t^{\prime
}}^{\lambda }(\xi )-K_{s,t}^{\lambda }(\xi )\bigr]2^{j|\gamma |}\big\vert%
^{2}d\xi \leq \\
&&\qquad \leq C2^{jd}[2^{j\alpha }(t^{\prime }-t)]^{2\kappa
.}e^{-c(2^{j\alpha }+\lambda )(t-s)}\sum_{k\leq |\gamma |}[2^{j\alpha
}(t-s)]^{2k}.
\end{eqnarray*}%
Therefore, 
\begin{eqnarray*}
\int \big\vert h_{s,t}^{\lambda ,j}(x)\big\vert dx &\leq &\biggl(\int \frac{%
dx}{(1+|2^{j}x|)^{2d_{0}}}\biggr)^{1/2}\biggl(\int \bigl(1+|2^{j}x|\bigr)%
^{2d_{0}}\big\vert h_{s,t}^{\lambda ,j}(x)\big\vert^{2}dx\biggr)^{1/2}\leq \\
&\leq &Ce^{-c(2^{j\alpha }+\lambda )(t-s)}\sum_{k\leq d_{0}}[2^{j\alpha
}(t-s)]^{k}
\end{eqnarray*}%
and 
\begin{eqnarray*}
&&\int \big\vert h_{s,t^{\prime }}^{\lambda ,j}(x)-h_{s,t}^{\lambda ,j}(x)%
\big\vert dx\leq \\
&&\qquad \leq \biggl(\int \frac{dx}{(1+|2^{j}x|)^{2d_{0}}}\biggr)^{1/2}%
\biggl(\int \bigl(1+|2^{j}x|\bigr)^{2d_{0}}\big\vert h_{s,t^{\prime
}}^{\lambda ,j}(x)-h_{s,t}^{\lambda ,j}(x)\big\vert^{2}dx\biggr)^{1/2}\leq \\
&&\qquad \leq C\bigl[2^{j\alpha }(t^{\prime }-t)\bigr]^{\kappa
}e^{-c(2^{j\alpha }+\lambda )(t-s)}\sum_{k\leq d_{0}}[2^{j\alpha }(t-s)]^{k}.
\end{eqnarray*}%
The lemma is proved.
\end{proof}

\begin{lemma}
\label{lem7} Let $\alpha ,\alpha ^{\prime }\in (0,2]$, $\beta ,\beta
^{\prime }\in (0,1)$ and $p\geq 2$. Let assumptions~\textbf{A1, A2} be
satisfied, $\alpha (1-\frac{1}{p})+\beta =\alpha ^{\prime }+\beta ^{\prime }$
and $g\in C_{r,p}^{\alpha ^{\prime },\beta ^{\prime }}(H\times U)$, $r=2,p$.

Then there is a constant $C$ depending only on $\alpha ,\beta ,p,d,\mu,
C^{(\alpha )},T$ such that the following estimates hold:

$\mathrm{(i)}$ 
\begin{equation*}
\Vert \widetilde R_{\lambda }g\Vert _{\alpha ,\beta ;p} \leq C\sum_{r=2,p}
\Vert g\Vert_{\alpha ^{\prime },\beta ^{\prime };r,p} ; 
\end{equation*}

$\mathrm{(ii)}$ 
\begin{equation*}
\Vert \widetilde R_{\lambda }g\Vert _{0,\beta ;p} \leq C\sum_{r=2,p} \Bigl( %
T\wedge\frac{1}{\lambda }\Bigr)^{1/r} \Vert g\Vert_{0,\beta ;r,p} ; 
\end{equation*}

$\mathrm{(iii)}$ 
\begin{equation*}
\Vert \widetilde R_{\lambda }g(t^{\prime },\cdot)-\widetilde R_{\lambda
}g(t,\cdot)\Vert _{\alpha ^{\prime },\beta^{\prime };p} \leq C\sum_{r=2,p}
(t^{\prime 1/r} \Vert g\Vert_{\alpha ^{\prime },\beta ^{\prime };r,p} . 
\end{equation*}
\end{lemma}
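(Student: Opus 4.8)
The plan is to prove the three estimates for $\widetilde R_\lambda g$ by working with the Littlewood--Paley decomposition and the dyadic pieces $\widetilde R_\lambda g_j$, then summing. By Lemma~\ref{lem4}, $\Vert \widetilde R_\lambda g\Vert_{\alpha,\beta;p}$ is equivalent to $\Vert \psi\ast \widetilde R_\lambda g\Vert_p + \sup_{j\geq1} 2^{(\alpha+\beta)j}\Vert\varphi_j\ast \widetilde R_\lambda g\Vert_p = \Vert \widetilde R_\lambda g_0\Vert_p + \sup_{j\geq1} 2^{(\alpha+\beta)j}\Vert \widetilde R_\lambda g_j\Vert_p$, so it suffices to bound each dyadic block. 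For a fixed $j\geq1$, I would apply Lemma~\ref{hl1} (with $A$ a point, $\mu_t$ the kernel $h_{s,t}^{\lambda,j}$ thought of as a signed measure $h_{s,t}^{\lambda,j}(y)\,dy$ in the $s$-variable, or more directly Lemma~\ref{hl0}) to the stochastic integral defining $\widetilde R_\lambda g_j(t,x)$, giving
\begin{equation*}
\Vert \widetilde R_\lambda g_j(t,\cdot)\Vert_p \leq C\sum_{r=2,p}\sup_{s,y}|g_j(s,y,\cdot)|_{r,p}\Bigl(\int_0^t \Bigl(\int |h_{s,t}^{\lambda,j}(y)|\,dy\Bigr)^r ds\Bigr)^{1/r}.
\end{equation*}
Now insert the kernel bound from Lemma~\ref{lem6}(i), $\int|h_{s,t}^{\lambda,j}(y)|\,dy \leq Ce^{-c2^{j\alpha}(t-s)}\sum_{k\leq d_0}[2^{j\alpha}(t-s)]^k$, and compute the resulting time integral by the substitution $\tau = 2^{j\alpha}(t-s)$: this produces a factor $2^{-j\alpha/r}$ (up to a constant from $\int_0^\infty e^{-cr\tau}\tau^{kr}d\tau$). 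Since $\Vert g_j\Vert_{r,p}\leq C 2^{-(\alpha'+\beta')j}\Vert g\Vert_{\alpha',\beta';r,p}$ by the Littlewood--Paley characterization applied to $g$, we get $2^{(\alpha+\beta)j}\Vert\widetilde R_\lambda g_j(t,\cdot)\Vert_p \leq C\sum_{r=2,p} 2^{(\alpha+\beta)j - \alpha j/r - (\alpha'+\beta')j}\Vert g\Vert_{\alpha',\beta';r,p}$; the hypothesis $\alpha(1-1/p)+\beta = \alpha'+\beta'$ must be used to check the exponent is $\leq 0$ for both $r=2$ and $r=p$. For $r=p$ the exponent is exactly $\alpha+\beta-\alpha/p-(\alpha'+\beta')=0$; for $r=2$ it is $\alpha+\beta-\alpha/2-(\alpha'+\beta') = \alpha(1/p - 1/2)\le 0$, so the $r=2$ term is even summable if one needed it. The $j=0$ term is handled the same way using the second bound in Lemma~\ref{lem6}(i). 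This proves (i).

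For (ii) I would not use the full H\"older norm machinery but rather estimate $\Vert\widetilde R_\lambda g\Vert_{0,\beta;p}$ directly. The sup-norm part $\Vert\widetilde R_\lambda g\Vert_p$ is exactly estimate (\ref{25}) already derived in the text, $\leq C\sum_{r=2,p}(t\wedge\frac1\lambda)^{1/r}\Vert g\Vert_{r,p}$. For the seminorm $[\widetilde R_\lambda g]_{\beta;p}$ one writes $\widetilde R_\lambda g(t,x) - \widetilde R_\lambda g(t,x') = \int_0^t\int_U [G_{s,t}^\lambda\ast g(s,\cdot,\upsilon)](x) - [G_{s,t}^\lambda\ast g(s,\cdot,\upsilon)](x')\,q(ds,d\upsilon)$; using $G_{s,t}^\lambda\ast g(s,\cdot)(x) - G_{s,t}^\lambda\ast g(s,\cdot)(x') = \int G_{s,t}^\lambda(y)[g(s,x-y) - g(s,x'-y)]\,dy$, applying Lemma~\ref{hl1} and then bounding $|g(s,x-y,\cdot) - g(s,x'-y,\cdot)|_{r,p}\leq [g]_{\beta;r,p}|x-x'|^\beta$, together with $\int G_{s,t}^\lambda(y)\,dy = e^{-\lambda(t-s)}$, gives the bound $C\sum_{r=2,p}(t\wedge\frac1\lambda)^{1/r}[g]_{\beta;r,p}|x-x'|^\beta$. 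Combining these two pieces yields (ii). (Alternatively one can sum the dyadic estimate from part (i) with the extra gain $2^{-j\alpha/r}$ and the $(t\wedge1/\lambda)^{1/r}$ factor retained, but the direct argument is cleaner.)

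For (iii) I would again pass to dyadic blocks: $\Vert \widetilde R_\lambda g(t',\cdot) - \widetilde R_\lambda g(t,\cdot)\Vert_{\alpha',\beta';p}$ is equivalent, via Lemma~\ref{lem4}, to $\sup_{j\geq0} 2^{(\alpha'+\beta')j}\Vert \widetilde R_\lambda g_j(t',\cdot) - \widetilde R_\lambda g_j(t,\cdot)\Vert_p$ (with the $\psi$-block for $j=0$). Write the increment as the sum of two terms: one over the time interval $[t,t']$, namely $\int_t^{t'}\int_U [h_{s,t'}^{\lambda,j}\ast g_j(s,\cdot,\upsilon)](x)\,q(ds,d\upsilon)$, and one over $[0,t]$ involving the kernel difference, $\int_0^t\int_U [(h_{s,t'}^{\lambda,j} - h_{s,t}^{\lambda,j})\ast g_j(s,\cdot,\upsilon)](x)\,q(ds,d\upsilon)$. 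The first term is bounded exactly as in part (i) but with the time integral restricted to $[t,t']$, which after the substitution $\tau = 2^{j\alpha}(t'-s)$ on an interval of length $2^{j\alpha}(t'-t)$ produces a factor $\min\{(t'-t)^{1/r}, 2^{-j\alpha/r}\}$; since the exponent bookkeeping from (i) shows that with $2^{-j\alpha/r}$ one lands at exponent $0$, using instead $(t'-t)^{1/r}$ in the $r=p$ slot directly gives the claimed $(t'-t)^{1/p}$ while the $r=2$ slot is summable. The second term uses Lemma~\ref{lem6}(ii): $\int|h_{s,t'}^{\lambda,j} - h_{s,t}^{\lambda,j}|\,dy \leq C[2^{j\alpha}(t'-t)]^\kappa e^{-c2^{j\alpha}(t-s)}\sum_{k\leq d_0}[2^{j\alpha}(t-s)]^k$ for any $\kappa\in(0,1)$; choosing $\kappa = 1/r$ (for each $r=2,p$) and running the time integral over $[0,t]$ yields a factor $[2^{j\alpha}(t'-t)]^{1/r} 2^{-j\alpha/r} = (t'-t)^{1/r}$, and again the exponent count gives $0$ in the $r=p$ slot. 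Assembling the $r=2$ and $r=p$ contributions of both terms, and invoking $\Vert g_j\Vert_{r,p}\leq C2^{-(\alpha'+\beta')j}\Vert g\Vert_{\alpha',\beta';r,p}$, gives (iii). The main obstacle throughout is the exponent bookkeeping --- verifying that the Littlewood--Paley gain $2^{-j\alpha/r}$ from integrating the heat-type kernel in time, combined with the constraint $\alpha(1-1/p)+\beta = \alpha'+\beta'$, exactly balances the loss $2^{(\alpha+\beta)j}$ from the target H\"older norm against the $2^{-(\alpha'+\beta')j}$ smoothness of $g$ --- and checking that the $r=2$ term never dominates. The probabilistic input (Lemmas~\ref{hl0} and~\ref{hl1}) is what forces the sum over $r=2,p$ and hence the appearance of $\alpha(1-1/p)$ rather than $\alpha$; everything else is the deterministic kernel analysis of Lemma~\ref{lem6}.
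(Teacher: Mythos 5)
Your proposal follows essentially the same route as the paper's proof: Littlewood--Paley blocks $g_j$ estimated via Lemma \ref{hl1} together with the kernel bounds of Lemma \ref{lem6} (with $\kappa=1/r$ for the time increments), the direct convolution argument with $G^{\lambda}_{s,t}$ for part (ii), and Lemma \ref{lem4} to reassemble the norms, with the same exponent bookkeeping using $\alpha(1-1/p)+\beta=\alpha'+\beta'$. The argument is correct (the only slip is the parenthetical ``$A$ a point'' --- in applying Lemma \ref{hl1} one takes $A=\mathbf{R}^{d}$ with $\mu_s(dy)=h^{\lambda,j}_{s,t}(y)\,dy$ --- but the displayed inequality you then write is the right one).
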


\begin{proof}
(i). By Lemma 16 \cite{mip2}, 
\begin{equation*}
\int \big\vert h_{s,t}^{\lambda ,0}(x)\big\vert dx\leq Ce^{-\lambda (t-s)}. 
\end{equation*}%
Hence, for $0\leq u\leq t,\ r\geq 1$ 
\begin{equation}
\int_{u}^{t}\biggl(\int \big\vert h_{s,t}^{\lambda ,0}(x)\big\vert dx\biggr)%
^{r}ds\leq C(t-u).  \label{32}
\end{equation}

Using the estimate of Lemma \ref{lem6} (i), we have for $0\leq u\leq t,\
r\geq 1,\ j\geq 1$ 
\begin{eqnarray}
&&\int_{u}^{t}\biggl(\int \big\vert h_{s,t}^{\lambda ,j}(x)\big\vert dx%
\biggr)^{r}ds\leq  \notag \\
&&\qquad \leq C\int_{u}^{t}\biggl[e^{-c(2^{j\alpha }+\lambda
)(t-s)}\sum_{k\leq d_{0}}\bigl[2^{j\alpha }(t-s)\bigr]^{k}\biggr]^{r}ds\leq 
\notag \\
&&\qquad \leq C2^{-j\alpha }\int_{0}^{2^{j\alpha }(t-u)}e^{-crv}\biggl(%
\sum_{k\leq d_{0}}\upsilon ^{k}\biggr)^{r}d\upsilon \leq  \notag \\
&&\qquad \leq C2^{-j\alpha }\min \bigl[1,2^{j\alpha }(t-u)\bigr].  \label{33}
\end{eqnarray}%
These estimates, together with Lemma \ref{hl1}, imply 
\begin{eqnarray*}
\big\vert\widetilde{R}_{\lambda }g_{j}(t,x)\big\vert_{p} &\leq
&C\sum_{r=2,p}\Vert g_{j}\Vert _{r,p}\biggl\{\int_{0}^{t}\biggl(\int %
\big\vert h_{s,t}^{\lambda ,j}(x)\big\vert dx\biggr)^{r}ds\biggr\}^{1/r}\leq
\\
&\leq &C\sum_{r=2,p}2^{-j\alpha /r}\Vert g_{j}\Vert _{r,p}.
\end{eqnarray*}%
By Lemma \ref{lem4}, 
\begin{eqnarray*}
\Vert \widetilde{R}_{\lambda }g\Vert _{\alpha ,\beta ;p} &\leq &C\Bigl(\Vert 
\widetilde{R}_{\lambda }g_{0}\Vert _{p}+\sup_{j\geq 1}2^{j(\alpha +\beta
)}\Vert \widetilde{R}_{\lambda }g_{j}\Vert _{p}\Bigr)\leq \\
&\leq &C\biggl(\sum_{r=2,p}\Vert g_{0}\Vert _{r,p}+\sup_{j\geq 1}2^{j(\alpha
^{\prime }+\beta ^{\prime })}\sum_{r=2,p}\Vert g_{j}\Vert _{r,p}\biggr)\leq
\\
&\leq &C\sum_{r=2,p}\Vert g\Vert _{\alpha ^{\prime },\beta ^{\prime };r,p}.
\end{eqnarray*}

(ii). By Lemma \ref{hl1} and (\ref{22}), (\ref{24}) 
\begin{eqnarray*}
&&\big\vert\widetilde{R}_{\lambda }g(t,x)-\widetilde{R}_{\lambda
}g(t,x^{\prime })\big\vert_{p}= \\
&&\quad =\bigg\vert\int_{0}^{t}\int_{U}\int_{\mathbb{R}^{d}}G_{s,t}^{\lambda
}(y)[g(s,x-y,\upsilon )-g(s,x^{\prime }-y,\upsilon )]dyq(ds,d\upsilon )%
\bigg\vert_{p}\leq \\
&&\quad \leq C\sum_{r=2,p}\sup_{s\leq t,y}|g(s,x-y,\cdot )-g(s,x^{\prime
}-y,\cdot )|_{r,p}\biggl\{\int_{0}^{t}\biggl(\int G_{s,t}^{\lambda }(y)dy%
\biggr)^{r}ds\biggr\}^{1/r}\leq \\
&&\quad \leq C|x-x^{\prime }|^{\beta }\sum_{r=2,p}[g]_{\beta ;r,p}\biggl(%
\int_{0}^{t}e^{-\lambda (t-s)}ds\biggr)^{1/r}\leq \\
&&\quad \leq C|x-x^{\prime }|^{\beta }\sum_{r=2,p}\Bigl(T\wedge \frac{1}{%
\lambda }\Bigr)^{1/r}[g]_{\beta ;r,p}.
\end{eqnarray*}%
From this estimate and (\ref{25}) follows the assertion (ii).

(iii). By Lemma 16 \cite{mip2}, 
\begin{equation*}
\int \big\vert h_{s,t^{\prime }}^{\lambda ,0}(x)-h_{s,t}^{\lambda ,0}(x)%
\big\vert dx\leq C(1+\lambda )e^{-\lambda (t-s)}(t^{\prime }-t). 
\end{equation*}%
Hence, for $r\geq 1$ 
\begin{equation}
\int_{0}^{t}\biggl(\int \big\vert h_{s,t^{\prime }}^{\lambda
,0}(x)-h_{s,t}^{\lambda ,0}(x)\big\vert dx\biggr)^{r}ds\leq C(t^{\prime
}-t)^{r}.  \label{34}
\end{equation}

Using the estimate of Lemma \ref{lem6} (ii) with $\kappa =1/r$, we have for $%
r>1$ and $j\geq 1$ 
\begin{eqnarray}
&&\int_{0}^{t}\biggl(\int \big\vert h_{s,t^{\prime }}^{\lambda
,j}(x)-h_{s,t}^{\lambda ,j}(x)\big\vert dx\biggr)^{r}ds\leq  \notag \\
&&\quad \leq C2^{j\alpha }(t^{\prime }-t)\int_{0}^{t}e^{-cr(2^{j\alpha
}+\lambda )(t-s)}\biggl(\sum_{k\leq d_{0}}\bigl[2^{j\alpha }(t-s)\bigr]^{k}%
\biggr)^{r}ds\leq  \notag \\
&&\quad \leq C(t^{\prime }-t)\int_{0}^{\infty }e^{-cr\upsilon }\biggl(%
\sum_{k\leq d_{0}}\upsilon ^{k}\biggr)^{r}d\upsilon \leq  \label{35} \\
&&\quad \leq C(t^{\prime }-t).  \notag
\end{eqnarray}%
According to Lemma \ref{hl1} and estimates (\ref{32})--(\ref{35}), for $%
j\geq 0$ 
\begin{eqnarray*}
&&\big\vert\widetilde{R}_{\lambda }g_{j}(t^{\prime },x)-\widetilde{R}%
_{\lambda }g_{j}(t,x)\big\vert_{p}\leq \\
&&\quad \leq \bigg\vert\int_{t}^{t^{\prime }}\int_{U}\bigl[h_{s,t^{\prime
}}^{\lambda ,j}(\cdot )\ast g_{j}(s,\cdot ,\upsilon )\bigr](x)q(ds,d\upsilon
)\bigg\vert_{p}+ \\
&&\qquad +\bigg\vert\int_{0}^{t}\int_{U}\Bigl[\bigl(h_{s,t^{\prime
}}^{\lambda ,j}(\cdot )-h_{s,t}^{\lambda ,j}(\cdot )\bigr)\ast g_{j}(s,\cdot
,\upsilon )\Bigr](x)q(ds,d\upsilon )\bigg\vert_{p}\leq \\
&&\quad \leq C\sum_{r=2,p}\Vert g_{j}\Vert _{r,p}\bigg[\biggl(%
\int_{t}^{t^{\prime }}\Bigl(\int \big\vert h_{s,t^{\prime }}^{\lambda ,j}(x)%
\big\vert dx\Bigr)^{r}ds\biggr)^{1/r}+ \\
&&\qquad +\biggl(\int_{0}^{t}\Bigl(\int \big\vert h_{s,t^{\prime }}^{\lambda
,j}(x)-h_{s,t}^{\lambda ,j}(x)\big\vert dx\Bigr)^{r}ds\biggr)^{1/r}\bigg]\leq
\\
&&\quad \leq C\sum_{r=2,p}(t^{\prime }-t)^{1/r}\Vert g_{j}\Vert _{r,p}.
\end{eqnarray*}%
Finally, by Lemma \ref{lem4}, 
\begin{eqnarray*}
&&\Vert \widetilde{R}_{\lambda }g(t^{\prime },\cdot )-\widetilde{R}_{\lambda
}g(t,\cdot )\Vert _{\alpha ^{\prime },\beta ^{\prime };p}\leq C\bigg(\Vert 
\widetilde{R}_{\lambda }g_{0}(t^{\prime },\cdot )-\widetilde{R}_{\lambda
}g_{0}(t,\cdot )\Vert _{p}+ \\
&&\qquad +\sup_{j\geq 1}2^{j(\alpha ^{\prime }+\beta ^{\prime })}\Vert 
\widetilde{R}_{\lambda }g_{j}(t^{\prime },\cdot )-\widetilde{R}_{\lambda
}g_{j}(t,\cdot )\Vert _{p}\bigg)\leq \\
&&\quad \leq C\biggl(\sum_{r=2,p}(t^{\prime }-t)^{1/r}\Vert g_{0}\Vert
_{r,p}+\sup_{j\geq 1}2^{j(\alpha ^{\prime }+\beta ^{\prime
})}\sum_{r=2,p}(t^{\prime 1/r}\Vert g_{j}\Vert _{r,p}\biggr)\leq \\
&&\quad \leq C\sum_{r=2,p}(t^{\prime }-t)^{1/r}\Vert g\Vert _{\alpha
^{\prime },\beta ^{\prime };r,p}.
\end{eqnarray*}%
The lemma is proved.
\end{proof}

\begin{lemma}
\label{lem8} Let $\alpha \in (0,2],\ \beta \in (0,1)$, $p\geq 1,$ $f\in
C_{p}^{0,\beta }(H)$, and let assumptions~\textbf{A1, A2} be satisfied.

Then there is a constant $C$ depending only on $\alpha ,\beta
,p,d,\mu,C^{(\alpha )},T$ such that the following estimates hold:

$\mathrm{(i)}$ 
\begin{equation*}
\Vert R_{\lambda }f\Vert_{\alpha ,\beta ;p} \leq C \Vert f\Vert _{0,\beta
;p} ; 
\end{equation*}

$\mathrm{(ii)}$ 
\begin{equation*}
\Vert R_{\lambda }f\Vert_{0 ,\beta ;p} \leq C \Bigl( T\wedge\frac{1}{\lambda 
}\Bigr) \Vert f\Vert _{0,\beta ;p} ; 
\end{equation*}

$\mathrm{(iii)}$\ \ for $0\leq t\leq t^{\prime }\leq T$ and $\nu \in
(0,\alpha )$ 
\begin{equation*}
\Vert R_{\lambda }f(t^{\prime },\cdot )-R_{\lambda }f(t,\cdot )\Vert _{\nu
,\beta ;p}\leq C(t^{\prime }-t)^{1-\frac{\nu }{\alpha }}\Vert f\Vert
_{0,\beta ;p}. 
\end{equation*}
\end{lemma}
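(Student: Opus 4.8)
The plan is to imitate the proof of Lemma~\ref{lem7}, replacing the stochastic-integral bound of Lemma~\ref{hl1} by Minkowski's inequality (which is available here since $f$ enters linearly through the deterministic representation \eqref{rez}, \eqref{23}, so no approximation argument is needed), and then to read off the $C_p^{\alpha,\beta}$-norms from the Littlewood--Paley characterization of Lemma~\ref{lem4}. Set $f_0=f\ast\psi$ and $f_j=f\ast\varphi_j$, $j\geq1$, so that $\psi\ast R_\lambda f=R_\lambda f_0$, $\varphi_j\ast R_\lambda f=R_\lambda f_j$ and
\[
R_\lambda f_j(t,x)=\int_0^t\bigl[h_{s,t}^{\lambda,j}(\cdot)\ast f_j(s,\cdot)\bigr](x)\,ds,\qquad j\geq0.
\]
By Minkowski's inequality $|R_\lambda f_j(t,x)|_p\leq\|f_j\|_p\int_0^t\int|h_{s,t}^{\lambda,j}(x)|\,dx\,ds$, and the kernel bounds of Lemma~\ref{lem6}(i) (equivalently \eqref{32} and \eqref{33} with $r=1$, $u=0$, and Lemma~16 of \cite{mip2} for $j=0$) give $\int_0^t\int|h_{s,t}^{\lambda,j}(x)|\,dx\,ds\leq C2^{-j\alpha}$ for $j\geq1$ and $\int_0^t\int|h_{s,t}^{\lambda,0}(x)|\,dx\,ds\leq C(T\wedge\frac1\lambda)$.

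For (i): the above yields $|R_\lambda f_j(t,x)|_p\leq C2^{-j\alpha}\|f_j\|_p$ and $|R_\lambda f_0(t,x)|_p\leq C\|f_0\|_p$. Since the $C_p^{0,\beta}$-norm is equivalent to $\|f_0\|_p+\sup_{j\geq1}2^{\beta j}\|f_j\|_p$ (Lemma~\ref{lem4} with $\alpha=0$), Lemma~\ref{lem4}(i) gives
\[
\|R_\lambda f\|_{\alpha,\beta;p}\leq C\Bigl(\|R_\lambda f_0\|_p+\sup_{j\geq1}2^{(\alpha+\beta)j}\|R_\lambda f_j\|_p\Bigr)\leq C\Bigl(\|f_0\|_p+\sup_{j\geq1}2^{\beta j}\|f_j\|_p\Bigr)\leq C\|f\|_{0,\beta;p}.
\]
For (ii) I would argue directly from \eqref{23}: by \eqref{26}, $\|R_\lambda f\|_p\leq(T\wedge\frac1\lambda)\|f\|_p$, while by Minkowski's inequality and $\int G_{s,t}^\lambda(y)\,dy=e^{-\lambda(t-s)}$ (see \eqref{22}),
\[
|R_\lambda f(t,x)-R_\lambda f(t,x')|_p\leq\int_0^t\int G_{s,t}^\lambda(y)\,|f(s,x-y)-f(s,x'-y)|_p\,dy\,ds\leq|x-x'|^\beta[f]_{\beta;p}\int_0^t e^{-\lambda(t-s)}\,ds,
\]
so $[R_\lambda f]_{\beta;p}\leq(T\wedge\frac1\lambda)[f]_{\beta;p}$; adding the two estimates gives (ii).

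For (iii), fix $0\leq t\leq t'\leq T$ and $\nu\in(0,\alpha)$, and split dyadically:
\[
R_\lambda f_j(t',x)-R_\lambda f_j(t,x)=\int_t^{t'}\bigl[h_{s,t'}^{\lambda,j}(\cdot)\ast f_j(s,\cdot)\bigr](x)\,ds+\int_0^t\bigl[\bigl(h_{s,t'}^{\lambda,j}(\cdot)-h_{s,t}^{\lambda,j}(\cdot)\bigr)\ast f_j(s,\cdot)\bigr](x)\,ds.
\]
Bounding the first term by $\|f_j\|_p\int_t^{t'}\int|h_{s,t'}^{\lambda,j}(x)|\,dx\,ds$, Lemma~\ref{lem6}(i) together with the substitution $v=2^{j\alpha}(t'-s)$ produces a factor $2^{-j\alpha}\min[1,2^{j\alpha}(t'-t)]\leq 2^{-j\alpha}[2^{j\alpha}(t'-t)]^{1-\nu/\alpha}=2^{-j\nu}(t'-t)^{1-\nu/\alpha}$; bounding the second term by $\|f_j\|_p\int_0^t\int|h_{s,t'}^{\lambda,j}(x)-h_{s,t}^{\lambda,j}(x)|\,dx\,ds$ and applying Lemma~\ref{lem6}(ii) with $\kappa=1-\nu/\alpha\in(0,1)$ (this is where $\nu<\alpha$ is used) gives the same bound, so $|R_\lambda f_j(t',x)-R_\lambda f_j(t,x)|_p\leq C2^{-j\nu}(t'-t)^{1-\nu/\alpha}\|f_j\|_p$ for $j\geq1$; for $j=0$ the estimate $\int|h_{s,t'}^{\lambda,0}(x)|\,dx\leq C$ and \eqref{34} (with $r=1$) give $|R_\lambda f_0(t',x)-R_\lambda f_0(t,x)|_p\leq C(t'-t)\|f_0\|_p\leq C(t'-t)^{1-\nu/\alpha}\|f_0\|_p$ since $t'-t\leq T$. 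Then Lemma~\ref{lem4}(i), applied with $\nu$ in place of $\alpha$, yields
\[
\|R_\lambda f(t',\cdot)-R_\lambda f(t,\cdot)\|_{\nu,\beta;p}\leq C(t'-t)^{1-\nu/\alpha}\Bigl(\|f_0\|_p+\sup_{j\geq1}2^{\beta j}\|f_j\|_p\Bigr)\leq C(t'-t)^{1-\nu/\alpha}\|f\|_{0,\beta;p}.
\]
The only delicate point is the bookkeeping of exponents: one must check that the gain $2^{-j\alpha}$ (resp.\ $2^{-j\nu}$) in the kernel estimates of Lemma~\ref{lem6} exactly absorbs the weight $2^{(\alpha+\beta)j}$ (resp.\ $2^{(\nu+\beta)j}$) of Lemma~\ref{lem4} against the weight $2^{\beta j}$ carried by the $C_p^{0,\beta}$-norm of $f$, and that the minimum $\min[1,2^{j\alpha}(t'-t)]$ interpolates down to the Hölder exponent $1-\nu/\alpha$; everything else is a routine application of Minkowski's inequality and the bounds of Lemma~\ref{lem6}.
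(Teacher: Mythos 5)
Your proposal is correct and follows essentially the same route as the paper: Minkowski's inequality on the dyadic pieces $R_\lambda f_j$, the kernel bounds of Lemma~\ref{lem6} (via (\ref{32})--(\ref{34})), and the Littlewood--Paley characterization of Lemma~\ref{lem4}, with the direct convolution estimate from (\ref{22}), (\ref{23}), (\ref{26}) for part (ii). The only cosmetic difference is in part (iii), where you interpolate via $\min[1,x]\leq x^{\kappa}$ while the paper extracts the factor $[2^{j\alpha}(t'-t)]^{\kappa}$ by H\"older's inequality on the time integral; both yield the same bound $C2^{-j\alpha(1-\kappa)}(t'-t)^{\kappa}\Vert f_j\Vert_p$ with $\kappa=1-\nu/\alpha$.
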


\begin{proof}
(i). Using the Minkowsky inequality and estimates (\ref{32}), (\ref{33})
with $r=1$, we have for $j\geq 0$ 
\begin{eqnarray*}
|R_{\lambda }f_{j}(t,x)|_{p} &\leq &C\Vert f_{j}\Vert _{p}\int_{0}^{t}\int %
\big\vert h_{s,t}^{\lambda ,j}(x)\big\vert dxds\leq \\
&\leq &C2^{-j\alpha }\Vert f_{j}\Vert _{p}.
\end{eqnarray*}%
By Lemma \ref{lem4}, 
\begin{eqnarray*}
\Vert R_{\lambda }f\Vert _{\alpha ,\beta ;p} &\leq &C\Bigl(\Vert R_{\lambda
}f_{0}\Vert _{p}+\sup_{j\geq 1}2^{j(\alpha +\beta )}\Vert R_{\lambda
}f_{j}\Vert _{p}\Bigr)\leq \\
&\leq &C\Bigl(\Vert f_{0}\Vert _{p}+\sup_{j\geq 1}2^{j\beta }\Vert
f_{j}\Vert _{p}\Bigr)\leq \\
&\leq &C\Vert f\Vert _{0,\beta ;p}.
\end{eqnarray*}

(ii). According to the Minkowsky inequality and (\ref{22}), (\ref{23}), for $%
(t,x),(t,x^{\prime })\in H$ 
\begin{eqnarray*}
&&|R_{\lambda }f(t,x)-R_{\lambda }(t,x^{\prime })|_{p}= \\
&&\quad =\bigg\vert\int_{0}^{t}\int G_{s,t}^{\lambda }(y)\bigl[%
f(s,x-y)-f(s,x^{\prime }-y)\bigr]dyds\bigg\vert_{p}\leq \\
&&\quad \leq \sup_{s\leq t,y}|f(s,x-y)-f(s,x^{\prime
}-y)|_{p}\int_{0}^{t}\int G_{s,t}^{\lambda }(y)dyds\leq \\
&&\quad \leq |x-x^{\prime }|^{\beta }[f]_{\beta ;p}\int_{0}^{t}e^{-\lambda
(t-s)}ds\leq \\
&&\quad \leq |x-x^{\prime }|^{\beta }\Bigl(T\wedge \frac{1}{\lambda }\Bigr)%
\lbrack f]_{\beta ;p}.
\end{eqnarray*}%
This estimate, together with (\ref{26}), implies the assertion (ii).

(iii). Using Lemma \ref{lem6} and H\"{o}lder's inequality, we obtain for $%
\kappa \in (0,1)$ and $j\geq 1$ 
\begin{eqnarray*}
&&\int_{t}^{t^{\prime }}\int \big\vert h_{s,t}^{\lambda ,j}(x)\big\vert %
dxds\leq C\int_{t}^{t^{\prime }}e^{-c(2^{j\alpha }+\lambda
)(t-s)}\sum_{k\leq d_{0}}\bigl[2^{j\alpha }(t-s)\bigr]^{k}ds\leq \\
&&\quad \leq C2^{-j\alpha }\int_{0}^{2^{j\alpha }(t^{\prime }-t)}e^{-cs}%
\biggl(\sum_{k\leq d_{0}}s^{k}\biggr)ds\leq \\
&&\quad \leq C2^{-j\alpha }\bigl[2^{j\alpha }(t^{\prime }-t)\bigr]^{\kappa }%
\biggl\{\int_{0}^{\infty }\biggl[e^{-cs}\sum_{k\leq d_{0}}s^{k}\biggr]%
^{1/(1-\kappa )}ds\biggr\}^{1-\kappa }\leq \\
&&\quad \leq C2^{-j\alpha (1-\kappa )}(t^{\prime }-t)^{\kappa }
\end{eqnarray*}%
and 
\begin{eqnarray*}
&&\int_{0}^{t}\int \big\vert h_{s,t^{\prime }}^{\lambda
,j}(x)-h_{s,t}^{\lambda ,j}(x)\big\vert dxds\leq \\
&&\quad \leq C\bigl[2^{j\alpha }(t^{\prime }-t)\bigr]^{\kappa
}\int_{0}^{t}e^{-c(2^{j\alpha }+\lambda )(t-s)}\sum_{k\leq d_{0}}\bigl[%
2^{j\alpha }(t-s)\bigr]^{k}ds\leq \\
&&\quad \leq C2^{-j\alpha (1-\kappa )}(t^{\prime }-t)^{\kappa }.
\end{eqnarray*}

According to (\ref{32}) and (\ref{34}), the same estimates hold in the case $%
j=0$. Therefore, by Minkowsky's inequality, for $\kappa \in (0,1)$ and $%
j\geq 0.$ 
\begin{eqnarray*}
&&\big\vert R_{\lambda }f_{j}(t^{\prime },x)-R_{\lambda }f_{j}(t,x)\big\vert%
_{p}\leq \bigg\vert\int_{t}^{t^{\prime }}\bigl[h_{s,t^{\prime }}^{\lambda
,j}\ast f_{j}(s,\cdot )\bigr](x)ds\bigg\vert_{p}+ \\
&&\qquad +\bigg\vert\int_{0}^{t}\int \bigl[\bigl(h_{s,t^{\prime }}^{\lambda
,j}-h_{s,t}^{\lambda ,j}\bigr)\ast f_{j}(s,\cdot )\bigr](x)ds\bigg\vert%
_{p}\leq \\
&&\quad \leq C\Vert f_{j}\Vert _{p}\biggl(\int_{t}^{t^{\prime }}\int %
\big\vert h_{s,t^{\prime }}^{\lambda ,j}(x)\big\vert dxds+\int_{0}^{t}\int %
\big\vert h_{s,t^{\prime }}^{\lambda ,j}(x)-h_{s,t}^{\lambda ,j}(x)\big\vert %
dxds\biggr)\leq \\
&&\quad \leq C2^{-j\alpha (1-\kappa )}(t^{\prime }-t)^{\kappa }\Vert
f_{j}\Vert _{p}.
\end{eqnarray*}%
This estimate with $\kappa =1-\nu /\alpha $ and Lemma \ref{lem4} imply 
\begin{eqnarray*}
&&\Vert R_{\lambda }f(t^{\prime },\cdot )-R_{\lambda }f(t,\cdot )\Vert _{\nu
,\beta ;p}\leq \\
&&\quad \leq C\Bigl(\Vert R_{\lambda }f_{0}(t^{\prime },\cdot )-R_{\lambda
}f_{0}(t,\cdot )\Vert _{p}+\sup_{j\geq 1}2^{j(\nu +\beta )}\Vert R_{\lambda
}f_{j}(t^{\prime },\cdot )-R_{\lambda }f_{j}(t,\cdot )\Vert _{p}\Bigr)\leq \\
&&\quad \leq C(t^{\prime 1-\nu /\alpha }\Bigl(\Vert f_{0}\Vert
_{p}+\sup_{j\geq 1}2^{j\beta }\Vert f_{j}\Vert _{p}\Bigr)\leq \\
&&\quad \leq C(t^{\prime }-t)^{1-\nu /\alpha }\Vert f\Vert _{0,\beta ;p}.
\end{eqnarray*}%
The lemma is proved.
\end{proof}

\subsection{Proof of Theorem \protect\ref{main}}

Let 
\begin{equation*}
u=R_{\lambda }f+\widetilde{R}_{\lambda }g. 
\end{equation*}%
According to Lemmas \ref{lem7} and \ref{lem8}, the function $u$ belongs to
the space $C_{p}^{\alpha ,\beta }(H)$ and satisfies all the required
estimates (we take $\nu =\alpha ^{\prime }+\beta ^{\prime }-\beta =\alpha
(1-1/p)$ in Lemma \ref{lem8} (iii) and notice that, by Lemma \ref{lem4}, the
norms $\Vert \cdot \Vert _{\alpha ^{\prime },\beta ^{\prime };p}$ and $\Vert
\cdot \Vert _{\nu ,\beta ;p}$ are equivalent). By Corollary \ref{lu}, the
equation (\ref{eq1}) has at most one solution in the space $C_{p}^{\alpha
,\beta }(H).$ Hence, it remains to prove that $u$ is a solution to (\ref{eq1}%
).

Let 
\begin{eqnarray*}
f_{n}(t,\cdot ) &=&f(t,\cdot )\ast \psi +\sum_{j=1}^{n}f(t,\cdot )\ast
\varphi _{j}, \\
g_{n}(t,\cdot ,\upsilon ) &=&g(t,\cdot ,\upsilon )\ast \psi
+\sum_{j=1}^{n}g(t,\cdot ,\upsilon )\ast \varphi _{j},\quad n\geqslant 1,
\end{eqnarray*}%
where the functions $\psi ,\varphi _{j},\ j\geqslant 1$, are defined by (\ref%
{28}) and (\ref{29}). By Lemma \ref{l0}, the function 
\begin{equation*}
u_{n}=R_{\lambda }f_{n}+\widetilde{R}_{\lambda }g_{n} 
\end{equation*}%
is a unique solution in $C_{p}^{\infty }(H)$ to the equation (\ref{eq1})
with $f,g$ replaced by $f_{n},g_{n}$.

Let $\nu \in (0,\beta ^{\prime })$ be such that 
\begin{equation*}
\beta _{\nu }=\alpha ^{\prime }+\nu -\alpha \Bigl(1-\frac{1}{p}\Bigr)>0. 
\end{equation*}%
Using the estimates of Lemmas \ref{lem7}, \ref{lem8} and Lemma \ref{lem5},
we get 
\begin{eqnarray}
\Vert u-u_{n}\Vert _{\alpha ,\beta _{\nu };p} &\leqslant &\Vert R_{\lambda
}(f-f_{n})\Vert _{\alpha ,\beta _{\nu };p}+\Vert \widetilde{R}_{\lambda
}(g-g_{n})\Vert _{\alpha ,\beta _{\nu };p}\leqslant  \notag \\
&\leqslant &C\biggl(\Vert f-f_{n}\Vert _{0,\beta _{\nu
};p}+\sum_{r=2,p}\Vert g-g_{n}\Vert _{\alpha ^{\prime },\nu ;r,p}\biggr)%
\rightarrow 0  \label{36}
\end{eqnarray}%
as $n\rightarrow \infty $.

Let us introduce the function 
\begin{equation*}
\overline{u}(t,x)=\int_{0}^{t}(Au-\lambda
u+f)(s,x)ds+\int_{0}^{t}\int_{U}g(s,x,\upsilon )q(ds,d\upsilon ). 
\end{equation*}%
Then $\mathbf{P}$-a.s. for each $(t,x)\in H$ 
\begin{eqnarray*}
(\overline{u}-u_{n})(t,x) &=&\int_{0}^{t}\bigl[(A-\lambda )(u-u_{n})+f-f_{n}%
\bigr](s,x)ds+ \\
&&+\int_{0}^{t}\int_{U}(g-g_{n})(s,x,\upsilon )q(ds,d\upsilon ).
\end{eqnarray*}%
By Minkowsky's inequality and Lemma \ref{hl0}, 
\begin{eqnarray}
\Vert \overline{u}-u_{n}\Vert _{p} &\leqslant &C\bigg(\Vert (A-\lambda
)(u-u_{n})\Vert _{p}+  \notag \\
&&+\Vert f-f_{n}\Vert _{p}+\sum_{r=2,p}\Vert g-g_{n}\Vert _{r,p}\bigg).
\label{37}
\end{eqnarray}%
By Lemma 20 \cite{mip2}, for each $\upsilon \in C_{p}^{\alpha ,\kappa }(H),\
\kappa \in (0,1)$, 
\begin{eqnarray*}
\Vert A\upsilon \Vert _{0,\kappa ;p} &=&\Big\Vert\mathcal{F}^{-1}\bigl\{\psi
^{(\alpha )}(t,\xi )(1+|\xi |^{\alpha })^{-1}\mathcal{F}(\upsilon +\partial
^{\alpha }\upsilon )\bigr\}\Big\Vert_{0,\kappa ;p}\leqslant \\
&\leqslant &C\Vert \upsilon +\partial ^{\alpha }\upsilon \Vert _{0,\kappa
;p}\leqslant C\Vert \upsilon \Vert _{\alpha ,\kappa ;p}.
\end{eqnarray*}%
Hence, according to (\ref{36}), 
\begin{equation*}
\Vert A(u-u_{n})\Vert _{0,\beta _{\nu };p}\leqslant C\Vert u-u_{n}\Vert
_{\alpha ,\beta _{\nu };p}\rightarrow 0 
\end{equation*}%
as $n\rightarrow \infty $, and by (\ref{37}), $\Vert \overline{u}-u_{n}\Vert
_{p}\rightarrow 0$ as $n\rightarrow \infty $. Thus, for each $(t,x)\in H$,
we have $u(t,x)=\overline{u}(t,x)$ $\mathbf{P}$-a.s.

The theorem is proved.

\section{Appendix}

In the following lemma, we prove the existence of smooth modifications of
stochastic integrals.

\begin{lemma}
\label{cl1}Assume $g\in {C}_{l,p}^{\infty}(H\times U)$, $l=2,p$. Then:

$\mathrm{a)}$ There is a $\mathcal{P}(\mathbb{F)}\mathcal{\otimes B}(\mathbf{%
R}^{d})\otimes \mathcal{U}$-measurable function $\widetilde{g}(s,x,\upsilon
) $ such that $d\Pi dsd\mathbf{P}$-a.e. 
\begin{equation}
\partial _{x}^{\gamma }g(s,x,\upsilon )=\partial _{x}^{\gamma }\widetilde{g}%
(s,x,\upsilon )  \label{cf0}
\end{equation}
for all $x\in \mathbf{R}^{d},\gamma \in \mathbf{N}_{0}^{d}$. In addition,
for any $R>0$ and $\gamma \in \mathbf{N}_{0}^{d},l=2,p,$%
\begin{equation}
\mathbf{E}\int_{0}^{T}\int_{U}\sup_{|x|\leq R}|\partial _{x}^{\gamma }%
\widetilde{g}(s,x,\upsilon )|^{l}\Pi (d\upsilon )ds<\infty .  \label{cf01}
\end{equation}

$\mathrm{b)}$ There is an $\mathcal{O(\mathbb{F)\otimes }B}(\mathbf{R}^{d})$%
-measurable real-valued function $M(t,x)$ $\mathbf{P}$-a.s. cadlag in $t$
and smooth in $x$ and, for each $x\in \mathbf{R}^{d},\gamma \in \mathbf{N}%
_{0}^{d},\mathbf{P} $-a.s. 
\begin{equation}
\partial _{x}^{\gamma }M(t,x)=\int_{0}^{t}\int_{U}\partial _{x}^{\gamma
}g(s,x,\upsilon )q(ds,d\upsilon )=\int_{0}^{t}\int_{U}\partial _{x}^{\gamma }%
\widetilde{g}(s,x,\upsilon )q(ds,d\upsilon )  \label{cf2}
\end{equation}%
for all $t\in \lbrack 0,T]$. In addition, for any $R>0$ and $\gamma \in 
\mathbf{N}_{0}^{d}$%
\begin{equation}
\mathbf{E[}\sup_{t\leq T,|x|\leq R}|\partial _{x}^{\gamma
}M(t,x)|^{p}]<\infty ,  \label{cf4}
\end{equation}%
and%
\begin{equation}
\sup_{x}\mathbf{E[}\sup_{t\leq T}|\partial _{x}^{\gamma }M(t,x)|^{p}]<\infty
.  \label{cf5}
\end{equation}
\end{lemma}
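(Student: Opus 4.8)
The plan is to treat the two assertions in order. For (a) I would build a predictable and jointly-in-$x$ smooth representative $\widetilde{g}$ of $g$ by taking predictable projections over a countable dense set of points $x$ and gluing them together through the Sobolev embedding theorem; for (b) I would, for each fixed $x$, define $M^{\gamma }(t,x)$ as the cadlag martingale version of $\int_{0}^{t}\int_{U}\partial _{x}^{\gamma }\widetilde{g}(s,x,\upsilon )q(ds,d\upsilon )$ and then regularize the whole family $\{M^{\gamma }(\cdot ,x)\}$ in $(t,x)$ simultaneously by a Kolmogorov-type continuity argument, the increments being controlled through Lemma \ref{hl0} and the bounds built into the norm of $C_{l,p}^{\infty }(H\times U)$.

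For (a) I would fix a countable dense set $Q\subset \mathbf{R}^{d}$ and, for $x\in Q$ and each multiindex $\gamma $, take a $\mathcal{P}(\mathbb{F})\otimes \mathcal{U}$-measurable predictable version $g_{x}^{\gamma }$ of the progressive process $(s,\omega ,\upsilon )\mapsto \partial _{x}^{\gamma }g(s,x,\upsilon )$ (its predictable projection, chosen jointly measurable in $\upsilon $ on each $U_{n}$ and then on $U=\cup _{n}U_{n}$; this process is $d\mathbf{P}\,ds\,\Pi |_{U_{n}}$-integrable by the $C_{l,p}^{\infty }$ bounds and H\"{o}lder's inequality, so \cite{delmey} applies), which gives $g_{x}^{\gamma }=\partial _{x}^{\gamma }g(\cdot ,x,\cdot )$ for $d\Pi \,ds\,d\mathbf{P}$-a.e.\ $(s,\upsilon ,\omega )$. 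Since $g\in C_{l,p}^{\infty }(H\times U)$, all $\sup _{(t,x)}\big\vert \partial _{x}^{\gamma }g(t,x,\cdot )\big\vert _{l,p}$ are finite, and Taylor's formula yields, for $x,x'\in Q$ and every $N\geqslant 0$,
\[
\Big\vert \partial _{x}^{\gamma }g(t,x',\cdot )-\sum_{|\delta |\leqslant N}\frac{(x'-x)^{\delta }}{\delta !}\,\partial _{x}^{\gamma +\delta }g(t,x,\cdot )\Big\vert _{l,p}\leqslant C|x-x'|^{N+1},
\]
hence the same estimates with $g$ replaced by $g_{x}^{\gamma }$. Because $d_{0}=[d/2]+1>d/l$ for $l\in \{2,p\}$, $p\geqslant 2$, the Sobolev embedding theorem on balls then lets me extend $x\mapsto g_{x}^{\gamma }$ from $Q$ to a function $\widetilde{g}(s,x,\upsilon )$ which, off a single $d\Pi \,ds\,d\mathbf{P}$-null set, is infinitely differentiable in $x$ with $\partial _{x}^{\gamma }\widetilde{g}=g_{\cdot }^{\gamma }$, is $\mathcal{P}(\mathbb{F})\otimes \mathcal{B}(\mathbf{R}^{d})\otimes \mathcal{U}$-measurable, and satisfies (\ref{cf01}); continuity in $x$ of $g$ and $\widetilde{g}$ together with their agreement on $Q$ gives (\ref{cf0}) for every $x$ and $\gamma $.

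For (b), with $\widetilde{g}$ as above, (\ref{cf01}) with $l=2$ makes $\partial _{x}^{\gamma }\widetilde{g}(\cdot ,x,\cdot )$ an admissible integrand, and I would let $M^{\gamma }(t,x)$ be the cadlag $L_{p}$-martingale version of $\int_{0}^{t}\int_{U}\partial _{x}^{\gamma }\widetilde{g}(s,x,\upsilon )q(ds,d\upsilon )$. Lemma \ref{hl0} and the finiteness of $\sup _{(t,x)}\big\vert \partial _{x}^{\gamma }g(t,x,\cdot )\big\vert _{l,p}$, $l=2,p$, give $\sup _{x}\mathbf{E}\bigl[\sup _{t\leqslant T}|M^{\gamma }(t,x)|^{p}\bigr]<\infty $, i.e.\ (\ref{cf5}). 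Applying Lemma \ref{hl0} to the increments and inserting the Taylor estimate above, I obtain, for all $x,x'$ and every $N\geqslant 0$,
\[
\mathbf{E}\Big[\sup_{t\leqslant T}\Big\vert M^{\gamma }(t,x')-\sum_{|\delta |\leqslant N}\frac{(x'-x)^{\delta }}{\delta !}\,M^{\gamma +\delta }(t,x)\Big\vert ^{p}\Big]\leqslant C|x-x'|^{(N+1)p},
\]
with $C$ independent of $x,x'$. Choosing $N$ with $(N+1)p>d$ and invoking the Kolmogorov continuity theorem for the random fields $x\mapsto M^{\gamma }(\cdot ,x)$ with values in the space of cadlag paths on $[0,T]$ under the supremum norm, I get a modification $M(t,x)$ that is $\mathbf{P}$-a.s.\ cadlag in $t$ and infinitely differentiable in $x$, with $\partial _{x}^{\gamma }M(\cdot ,x)=M^{\gamma }(\cdot ,x)$ $\mathbf{P}$-a.s.\ for every $x$ and $\gamma $; as a pointwise limit over $x\in Q$ of cadlag adapted processes it is $\mathcal{O}(\mathbb{F})\otimes \mathcal{B}(\mathbf{R}^{d})$-measurable. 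Then (\ref{cf4}) follows by localizing to $|x|\leqslant R$ and combining the Sobolev embedding theorem with the moment bounds, and (\ref{cf2}) follows since $\partial _{x}^{\gamma }\widetilde{g}(\cdot ,x,\cdot )=\partial _{x}^{\gamma }g(\cdot ,x,\cdot )$ $d\Pi \,ds\,d\mathbf{P}$-a.e.\ makes the two stochastic integrals there coincide.

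The step I expect to be the main obstacle is the gluing in (b): the integrals $M^{\gamma }(\cdot ,x)$ are a priori defined only up to indistinguishability for each separate $x$, and assembling from them a single field jointly cadlag in $t$ and smooth in $x$ forces a Kolmogorov-type estimate in the Banach space of cadlag paths, which requires increments of arbitrarily high polynomial order in $|x-x'|$ --- this is exactly where the full $C_{l,p}^{\infty }$-regularity of $g$ is used, since it makes the Taylor remainder in the displayed inequality of any prescribed order. The analogous but milder point in (a) --- each predictable projection being defined only up to an evanescent set --- is handled in the same spirit on the countable set $Q$, so I do not expect it to cause real difficulty.
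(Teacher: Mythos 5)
Your proposal is correct in outline but follows a genuinely different route from the paper. For part (a) the paper does not use predictable projections at all: it replaces $g$ by the time averages $g_{n}(t,x,\upsilon )=n\int_{(t-1/n)\vee 0}^{t}g(s,x,\upsilon )ds$, which are automatically continuous and adapted (hence predictable) and converge to $g$ by Lebesgue differentiation, then mollifies in $x$ and extracts an a.e.\ limit along a fast subsequence using an $L^{l}(dx)$-Cauchy estimate on balls plus the Sobolev embedding theorem. For part (b) the paper takes, via Theorem 5.44 of \cite{jacod}, an $\mathcal{O}(\mathbb{F})\otimes \mathcal{B}(\mathbf{R}^{d})$-measurable family $\widetilde{M}^{\gamma }(t,x)$, mollifies it in $x$, identifies the $x$-derivatives of the mollification with stochastic integrals by the stochastic Fubini theorem, and again passes to the limit through an $\mathbf{E}\int_{|x|<R}\sup_{t}|\cdot |^{p}dx$ bound and Sobolev embedding. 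Your route is instead the Kunita-style one: Taylor-polynomial increment estimates of arbitrarily high order in $|x-x'|$ fed into a Kolmogorov-type chaining argument in the Banach space of cadlag paths. Both work; the paper's mollification trick buys that every approximant is already smooth in $x$, so only an $L^{p}$-convergence estimate on balls is needed and no chaining is required, whereas your approach avoids the stochastic Fubini theorem and the appeal to \cite{jacod}. The places where your version needs the most additional care are: (i) in part (a), the Taylor estimates you state are in the $|\cdot |_{l,p}$ norms (i.e.\ after integrating out $\omega $ and $\upsilon $), so converting them into a.e.-pointwise smoothness of $x\mapsto \widetilde{g}(s,x,\upsilon )$ off a single null set requires an explicit Borel--Cantelli/chaining step over a dyadic mesh (or, as in the paper, an $L^{l}(dx)$ bound followed by Sobolev embedding for fixed $(s,\omega ,\upsilon )$), which you gesture at but do not carry out; and (ii) the identification of the predictable projection of the progressive process $\partial _{x}^{\gamma }g(\cdot ,x,\cdot )$ with the process itself $d\Pi \,ds\,d\mathbf{P}$-a.e.\ uses that the exceptional set is thin and the compensator $\Pi (d\upsilon )dt$ is continuous in $t$ --- true, but a general-theory point the paper's averaging construction sidesteps entirely. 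Neither issue is a fatal gap.
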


\begin{proof}
Obviously, for each $R>0,\gamma \in \mathbf{N}_{0}^{d},$%
\begin{equation*}
\mathbf{E}\int_{0}^{T}\int_{U}\int_{|y|<R}|\partial _{y}^{\gamma
}g(s,y,\upsilon )|^{l}dy\Pi (d\upsilon )ds<\infty , 
\end{equation*}%
$l=2,p$. Define a sequence of $\mathcal{P}(\mathbb{F})\mathcal{\otimes B}(%
\mathbf{R}^{d})\otimes \mathcal{U}$-measurable functions%
\begin{eqnarray*}
g_{n}(t,x,\upsilon ) &=&n\int_{t_{n}}^{t}g(s,x,\upsilon )ds,n\geq 1, \\
g_{n}^{\gamma }(t,x,\upsilon ) &=&n\int_{t_{n}}^{t}\partial _{x}^{\gamma
}g(s,x,\upsilon )ds,\gamma \in \mathbf{N}_{0}^{d},n\geq 1,
\end{eqnarray*}%
where $t_{n}=(t-1/n)\vee 0$. For each $\gamma \in \mathbf{N}^d_{0}$ we have $%
\mathbf{P}$-a.s. for all $t\geq 0,\varphi \in C_{0}^{\infty }(\mathbf{R}%
^{d}) $%
\begin{eqnarray}
\int_{\mathbf{R}^{d}}g_{n}(t,y,\upsilon )\partial _{y}^{\gamma }\varphi
(y)dy &=&\int_{\mathbf{R}^{d}}n\int_{t_{n}}^{t}g(s,y,\upsilon )ds\partial
^{\gamma }\varphi (y)dy  \label{cf1} \\
&=&(-1)^{|\gamma |}\int_{\mathbf{R}^{d}}n\int_{t_{n}}^{t}\partial ^{\gamma
}g(s,y,\upsilon )ds\varphi (y)dy.  \notag
\end{eqnarray}

Let $w\in C_{0}^{\infty }(\mathbf{R}^{d})$ be a non-negative function such
that $w(x)=0$ if $|x|\geq 1$ and $\int w(x)dx=1$. We define $\mathbf{P}$%
-a.s. continuous in $t$ and smooth in $x$ functions%
\begin{eqnarray*}
g_{n,\varepsilon }(t,x,\upsilon ) &=&n\int_{t_{n}}^{t}\int_{\mathbf{R}%
^{d}}g(s,y,\upsilon )w_{\varepsilon }(x-y)dsdy, \\
g_{n,\varepsilon }^{\gamma }(t,x,\upsilon ) &=&n\int_{t_{n}}^{t}\int_{%
\mathbf{R}^{d}}\partial _{y}^{\gamma }g(s,y,\upsilon )w_{\varepsilon
}(x-y)dsdy,
\end{eqnarray*}%
where $w_{\varepsilon }(x)=\varepsilon ^{-d}w(x/\varepsilon ),x\in \mathbf{R}%
^{d},\gamma \in \mathbf{N}_{0}^{d}$. According to (\ref{cf1}), $\mathbf{P}$%
-a.s. for all $t\geq 0,x\in \mathbf{R}^{d},\gamma \in \mathbf{N}_{0}^{d},$%
\begin{eqnarray*}
\partial _{x}^{\gamma }g_{n,\varepsilon }(t,x,\upsilon ) &=&\int_{\mathbf{R}%
^{d}}n\int_{t_{n}}^{t}g(s,y,\upsilon )ds\partial _{x}^{\gamma
}w_{\varepsilon }(x-y)dy \\
&=&\int_{\mathbf{R}^{d}}n\int_{t_{n}}^{t}\partial _{x}^{\gamma
}g(s,x-y,\upsilon )dsw_{\varepsilon }(y)dy=g_{n,\varepsilon }^{\gamma }(t,x).
\end{eqnarray*}%
Therefore for each $R>0$ and $l=2,p,$%
\begin{eqnarray*}
&&\mathbf{E}\int_{0}^{T}\int_{|x|<R}\int_{U}|\partial _{x}^{\gamma
}g_{n,\varepsilon }(t,x,\upsilon )-\partial _{x}^{\gamma }g_{n^{\prime
},\varepsilon ^{\prime }}(t,x,\upsilon )|^{l}\Pi (d\upsilon )dxdt \\
&\leq &C[\int_{0}^{T}\int_{|x|<R}|g_{n,\varepsilon }^{\gamma }(t,x,\cdot
)-\partial ^{\gamma }g(t,x,\cdot )|_{l,l}^{l}dtdx \\
&&+\int_{0}^{T}\int_{|x|<R}|g_{n^{\prime },\varepsilon ^{\prime }}^{\gamma
}(t,x,\cdot )-\partial ^{\gamma }g(t,x,\cdot )|_{l,l}^{l}dtdx]\rightarrow 0
\end{eqnarray*}%
as $\varepsilon ,\varepsilon ^{\prime }\rightarrow 0$ and $n,n^{\prime
}\rightarrow \infty .$ By the Sobolev embedding theorem, for each $%
R>0,\gamma \in \mathbf{N}_{0}^{d}$,$l=2,p,$%
\begin{equation*}
\mathbf{E}\int_{0}^{T}\int_{U}\sup_{|x|\leq R}|\partial _{x}^{\gamma
}g_{n,\varepsilon }(t,x,\upsilon )-\partial _{x}^{\gamma }g_{n^{\prime
},\varepsilon ^{\prime }}(t,x,\upsilon )|^{l}\Pi (dz)dt\rightarrow 0 
\end{equation*}%
as $\varepsilon ,\varepsilon ^{\prime }\rightarrow 0$ and $n,n^{\prime
}\rightarrow \infty .$ Let $\widetilde{g}_{n}=g_{n,1/n}$ and choose a
subsequence $n_{k}\uparrow \infty $ as $k\rightarrow \infty $ such that for $%
l=2,p,$%
\begin{equation*}
\mathbf{E}\int_{0}^{T}\int_{U}\sup_{|x|<k,|\gamma |\leq k}|\partial
_{x}^{\gamma }\widetilde{g}_{n_{k+1}}(t,x,\upsilon )-\partial _{x}^{\gamma }%
\widetilde{g}_{n_{k}}(t,x,\upsilon )|^{l}\Pi (d\upsilon )dt\leq 2^{-kp}. 
\end{equation*}%
Then the function%
\begin{equation*}
\widetilde{g}(t,x,\upsilon )=\left\{ 
\begin{array}{cc}
\displaystyle\lim_{k}\widetilde{g}_{n_{k}}(t,x) & \text{if }%
\sum_{k}\sup_{|x|<k,|\gamma |\leq k}|\partial _{x}^{\gamma }\widetilde{g}%
_{n_{k+1}}-\partial _{x}^{\gamma }\widetilde{g}_{n_{k}}|(t,x,\upsilon
)<\infty , \\ 
0 & \text{otherwise,}%
\end{array}%
\right. 
\end{equation*}%
is $\mathcal{P}(\mathbb{F})\mathcal{\otimes B}(\mathbf{R}^{d})\otimes 
\mathcal{U}$-measurable and satisfies (\ref{cf0}) and (\ref{cf01}).

b) By Theorem 5.44 in \cite{jacod}, there are $\mathcal{O}(\mathbb{F}%
)\otimes \mathcal{B}(\mathbf{R}^{d})$-measurable functions $\widetilde{M}%
^{\gamma }(t,x),\gamma \in \mathbf{N}_{0}^{d}$ such that for each $x\in 
\mathbf{R}^{d} $ $\mathbf{P}$-a.s. 
\begin{equation*}
\widetilde{M}^{\gamma }(t,x)=\int_{0}^{t}\int_{U}\partial _{x}^{\gamma }%
\widetilde{g}(s,x,\upsilon )q(ds,d\upsilon )=\int_{0}^{t}\int_{U}\partial
_{x}^{\gamma }g(s,x,\upsilon )q(ds,d\upsilon ),t\in \lbrack 0,T], 
\end{equation*}%
where $\mathcal{O}(\mathbb{F})$ is the $\sigma $-algebra of well measurable
subsets of $[0,T]\times \Omega $. By the Burk\-holder--Davis--Gundy
inequality and Lemma \ref{hl0}, for each $R>0,t\in \lbrack 0,T],$%
\begin{eqnarray*}
\mathbf{E}\int_{|x|\leq R}\sup_{t\leq T}|\widetilde{M}^{\gamma }(t,x)|^{p}dx
&\leq &C\int_{|x|\leq R}\sum_{l=2,p}\mathbf{E}\left(
\int_{0}^{T}\int_{U}|\partial _{x}^{\gamma }g(s,x,\upsilon )|^{l}\Pi
(d\upsilon )ds\right) ^{p/l}dx \\
&\leq &C\sum_{l=2,p}\sup_{0\leq s\leq T,x}||\partial _{x}^{\gamma
}g(s,x,\cdot )||_{l,p}^{p}<\infty .
\end{eqnarray*}%
We define $\mathbf{P}$-a.s. cadlag in $t$ and smooth in $x$ functions%
\begin{equation*}
\widetilde{M}_{\varepsilon }^{\gamma }(t,x)=\int_{\mathbf{R}^{d}}\widetilde{M%
}^{\gamma }(t,y)w_{\varepsilon }(x-y)dx,\gamma \in \mathbf{N}_{0}^{d}. 
\end{equation*}%
By the stochastic Fubini theorem (see \cite{mik3}), for every $x\in \mathbf{R%
}^{d}$ we have $\mathbf{P}$-a.s. for all $t\in \lbrack 0,T]$ and $\gamma \in 
\mathbf{N}_{0}^{d},$%
\begin{eqnarray*}
\widetilde{M}_{\varepsilon }^{\gamma }(t,x) &=&\int \widetilde{M}^{\gamma
}(t,y)w_{\varepsilon }(x-y)dx=\int_{0}^{t}\int_{U}\int \partial _{x}^{\gamma
}\widetilde{g}(s,y,\upsilon )w_{\varepsilon }(x-y)dyq(ds,d\upsilon ) \\
&=&\int_{0}^{t}\int_{U}\int_{\mathbf{R}^{d}}\partial _{x}^{\gamma
}g(s,y,\upsilon )w_{\varepsilon }(x-y)dyq(ds,d\upsilon ), \\
\partial _{x}^{\gamma }\widetilde{M}_{\varepsilon }^{0}(t,x) &=&\int_{%
\mathbf{R}^{d}}\widetilde{M}^{0}(t,y)\partial _{x}^{\gamma }w_{\varepsilon
}(x-y)dx=(-1)^{|\gamma |}\int_{\mathbf{R}^{d}}\widetilde{M}^{0}(t,y)\partial
_{y}^{\gamma }w_{\varepsilon }(x-y)dx \\
&=&\int_{0}^{t}\int_{U}\int \partial _{x}^{\gamma }\widetilde{g}%
(s,y,\upsilon )w_{\varepsilon }(x-y)dyq(ds,d\upsilon ) \\
&=&\int_{0}^{t}\int_{U}\int \partial _{x}^{\gamma }g(s,y,\upsilon
)w_{\varepsilon }(x-y)dyq(ds,d\upsilon ).
\end{eqnarray*}%
Also, denoting%
\begin{eqnarray*}
g^{\gamma ,\varepsilon }(s,x,\upsilon ) &=&\int \partial _{y}^{\gamma
}g(s,y,\upsilon )w_{\varepsilon }(x-y)dy \\
&=&\int \partial _{x}^{\gamma }g(s,x-y,\upsilon )w_{\varepsilon }(y)dy,
\end{eqnarray*}%
we have, by Lemma \ref{hl0}, for each $R>0$%
\begin{eqnarray*}
&&\mathbf{E}\int_{|x|<R}\sup_{t\leq T}|\partial _{x}^{\gamma }\widetilde{M}%
_{\varepsilon }^{0}(t,x)-\partial _{x}^{\gamma }\widetilde{M}_{\varepsilon
^{\prime }}^{0}(t,x)|^{p}dx \\
&=&\int_{|x|<R}\mathbf{E}\sup_{t\leq T}|\partial _{x}^{\gamma }\widetilde{M}%
_{\varepsilon }^{0}(t,x)-\partial _{x}^{\gamma }\widetilde{M}_{\varepsilon
^{\prime }}^{0}(t,x)|^{p}dx \\
&\leq &C\mathbf{E}\int_{0}^{T}\sum_{l=2,p}\int_{|x|<R}\bigg(%
\int_{U}|g^{\gamma ,\varepsilon }(s,x,\upsilon )-g^{\gamma ,\varepsilon
^{\prime }}(s,x,\upsilon )|^{l}\Pi (d\upsilon )\bigg)^{\frac{p}{l}%
}dxds\rightarrow 0
\end{eqnarray*}%
as $\varepsilon ,\varepsilon ^{\prime }\rightarrow 0$. By the Sobolev
embedding theorem, for each $R>0$ and $\gamma \in \mathbf{N}_{0}^{d},$%
\begin{equation*}
\mathbf{E}\sup_{|x|\leq R,t\leq T}|\partial _{x}^{\gamma }\widetilde{M}%
_{\varepsilon }^{0}(t,x)-\partial _{x}^{\gamma }\widetilde{M}_{\varepsilon
^{\prime }}^{0}(t,x)|^{p}\rightarrow 0 
\end{equation*}%
as $\varepsilon ,\varepsilon ^{\prime }\rightarrow 0$. Therefore, there is
an $\mathcal{O}(\mathbb{F})\otimes \mathcal{B}(\mathbf{R}^{d})$-measurable
function $M(t,x)$ which is $\mathbf{P}$-a.s. cadlag in $t$ and smooth in $x$%
, satisfies 
(40)--(42) hold and for each $R>0,\gamma \in \mathbf{N}_{0}^{d}$%
\begin{equation*}
\mathbf{E}\sup_{|x|\leq R,t\leq T}|\partial _{x}^{\gamma }\widetilde{M}%
_{\varepsilon }^{0}(t,x)-\partial _{x}^{\gamma }M(t,x)|^{p}\rightarrow 0 
\end{equation*}%
as $\varepsilon \rightarrow 0$. The lemma is proved.
\end{proof}

\end{document}